  \theoremstyle{definition}
  \newtheorem{definition}{Definition}[section]
  \newtheorem{example}[definition]{Example}
  \newtheorem{remark}[definition]{Remark}
  \theoremstyle{plain}
  \newtheorem{lemma}[definition]{Lemma}
  \newtheorem{proposition}[definition]{Proposition}
  \newtheorem{theorem}[definition]{Theorem}
  \newtheorem{corollary}[definition]{Corollary}
  \newtheorem*{conjecture}{Conjecture}
  \newcommand{\J}{{\mathfrak{J}}}
\begin{document}

\title[Anti-Jordan triple system]
{The universal Associative envelope of the anti-Jordan triple system of $n \times n$ matrices}

\author{Hader A. Elgendy}

\address{Department of Mathematics and Statistics, University of Saskatchewan, Canada}

\email{hae431@mail.usask.ca}

\date{\textit{\today}}

\begin{abstract}
We show that the universal associative enveloping algebra of the simple anti-Jordan triple system
of all $n \times n$ matrices $(n \ge 2)$ over an algebraically closed field of characteristic 0 is finite dimensional.
We investigate the structure of the universal envelope and focus on the monomial basis,  the structure constants, and the center.
We explicitly determine the decomposition of the universal envelope into matrix algebras.
We classify all finite dimensional irreducible representations of the simple anti-Jordan triple system, and show that the universal envelope is semisimple.
We also provide an example to show that the universal enveloping algebras of anti-Jordan triple systems are not necessary to be finite-dimensional.
\end{abstract}

\subjclass[2000]{Primary 17C55. Secondary 13P10, 16S30, 17B35, 17A40.}

\keywords{Anti-Jordan triple systems, universal enveloping algebras, free associative algebras, noncommutative Gr\"obner bases, representation theory.}

\maketitle


\section{Introduction}

Anti-Jordan triple systems were introduced by Faulkner and Ferrar in \cite{F}.
The classification of finite-dimensional simple anti-Jordan triple systems over an algebraically closed field
of characteristic $0$ was given by Bashir \cite[Theorem 6]{simen}.

\begin{definition}\cite{simen}
A vector space $V$ over a field $F$ of characteristic $\neq 2$ endowed with a trilinear operation $ V \times V\times V \to V$, $(a, b, c)\to \langle abc\rangle$
is said to be an \emph{{anti-Jordan triple system}} if the following conditions are fulfilled for all $a,b,c,d,e \in V$:
\[
\langle abc \rangle = - \langle cba\rangle,
\qquad
\langle ab\langle cde\rangle\rangle = \langle\langle abc\rangle de\rangle + \langle c\langle bad\rangle e\rangle +\langle cd\langle abe\rangle\rangle.
\]
\end{definition}

If $A$ is an associative algebra, $A$ defines an anti-Jordan triple system $A_{-}$ relative to the
product $\langle abc\rangle = abc - cba$.

\begin{definition}
\emph{A representation} of an anti-Jordan triple system $\J$ is a homomorphism
$\rho\colon \J \to ( \mathrm{End}\,V )_{-}$ from $\J$ to the anti-Jordan triple system of endomorphisms of a vector space $V$.
In other words, $\rho$  is a linear mapping that satisfies
  \[
  \rho(\langle abc\rangle) = \rho(a)\rho(b)\rho(c) - \rho(c)\rho(b)\rho(a),
  \]
for all $a,b,c\in\J$.
Two representations $\rho_1$ and $\rho_2$ of an anti-Jordan triple system $\J$ on the same vector space $V$
are \emph{equivalent} if there exists an invertible endomorphism $T$ such that $\rho_2(a) = T^{-1} \rho_1(a) T$ for all  $a \in \J$.
\end{definition}

In this paper we use the theory of non-commutative Gr\"obner bases to prove that the universal enveloping algebra
of the simple anti-Jordan triple system of all $n \times n$ matrices is finite-dimensional.
This theory was used by Bergman \cite{Bergman} to give a new proof
of the PBW theorem and was used recently by Elgendy\cite{Elgendy2} and Elgendy and Bremner\cite{ElgendyBremner} to construct universal associative envelopes of nonassociative triple systems and universal envelopes of the $(n{+}1)$-dimensional $n$-Lie algebras respectively.

This paper is structured as follows. In Section 2, we recall basic results on noncommutative Gr\"obner bases.
In Section 3, we prove that the universal enveloping algebra of the simple anti-Jordan triple system of $n \times n$ matrices
over an algebraically closed field is finite-dimensional using Gr\"obner bases in free associative algebras.
In Section 4, we determine the structure constants of the universal enveloping algebra.
In Section 5, we determine the center of the universal enveloping algebra.
In the last section, we explicitly determine the complete decomposition of the universal enveloping algebra into a direct sum of matrix algebras. We also provide an example of a non-simple anti-Jordan triple system with infinite dimensional envelope. For examples of simple anti-Jordan triple systems with infinite dimensional envelopes see \cite{Elgendy, M}.

Unless otherwise stated, we assume throughout that all vector spaces are over
an algebraically closed field $F$ of characteristic 0.


\section{Preliminaries}

In this section we recall the basic definitions and results in the theory of
noncommutative Gr\"obner bases in free associative algebras following
\cite{deGraaf, ElgendyBremner}.

\begin{definition}
Let $X = \{ x_1, \dots, x_{n} \}$ be a set of symbols with the total order
$x_i < x_j$ if and only if $i < j$. The \emph{free monoid} generated by $X$ is
the set $X^\ast$ of all (possibly empty) words $w = x_{i_1} \cdots x_{i_k}$ ($k
\ge 0$) with the (associative) operation of concatenation. For $w = x_{i_1}
\cdots x_{i_k} \in X^*$ the \emph{degree} is $\deg(w) = k$.
The \emph{degree-lexicographical} (\emph{deglex})
order $<$ on $X^\ast$ is defined as follows: $u < v$ if and only if either ($i$)
$\deg(u) < \deg(v)$ or ($ii$) $\deg(u) = \deg(v)$ and $u = w x_i u'$, $v = w
x_j v'$ where $x_i < x_j$ ($w, u', v' \in X^\ast$). The \emph{free (unital) associative algebra} generated by $X$ is the vector space $F \langle X
\rangle$ with basis $X^*$ and multiplication extended bilinearly from
concatenation in $X^\ast$.
\end{definition}

\begin{definition}
The \emph{support} of a noncommutative polynomial $f \in F\langle X \rangle$ is
the set of all monomials $w \in X^\ast$ that occur in $f$ with nonzero
coefficient. The \emph{leading monomial} of $f \in F\langle X\rangle$, denoted
$\mathrm{LM}(f)$, is the highest element of the support of $f$ with respect to
deglex order. If $I$ is any ideal of $F\langle X \rangle$ then the set of
\emph{normal words} modulo $I$ is defined by $N(I) = \{ \, u \in X^* \mid
\text{$u \ne \mathrm{LM}(f)$ for any $f \in I$} \, \}$. We write $C(I)$ for the
subspace of $F\langle X \rangle$ spanned by $N(I)$.
\end{definition}

\begin{proposition} \label{C(I)proposition}
If $I \subset F\langle  X\rangle$ is an ideal then $F\langle X \rangle = C(I)
\oplus I$.
\end{proposition}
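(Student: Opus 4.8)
The plan is to establish the two halves of a direct-sum decomposition separately: first that $C(I) \cap I = \{0\}$, and then that $C(I) + I = F\langle X \rangle$. Combining these immediately yields $F\langle X \rangle = C(I) \oplus I$.

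For the intersection, I would argue by contradiction. Suppose $f \in C(I) \cap I$ with $f \neq 0$. Since $f \neq 0$ it has a well-defined leading monomial $w = \mathrm{LM}(f)$, which lies in the support of $f$. Because $f \in C(I)$, every monomial in the support of $f$ is a normal word modulo $I$; in particular $w \in N(I)$. On the other hand $f \in I$, so by the very definition of $N(I)$ we have $w = \mathrm{LM}(f) \notin N(I)$, a contradiction. Hence $C(I) \cap I = \{0\}$.

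For $C(I) + I = F\langle X \rangle$, the key structural fact is that the deglex order is a well-order on $X^\ast$: for each degree $d$ there are only finitely many words of degree $d$ (as $X$ is finite), and the degrees are well-ordered, so every nonempty subset of $X^\ast$ has a least element. This licenses a proof by strong induction on $\mathrm{LM}(f)$ that every $f \in F\langle X \rangle$ can be written as $f = c + g$ with $c \in C(I)$ and $g \in I$. The case $f = 0$ is trivial. For the inductive step, set $w = \mathrm{LM}(f)$. If $w \in N(I)$, write $f = \lambda w + f_1$ with $\lambda \neq 0$ and $f_1 = 0$ or $\mathrm{LM}(f_1) < w$; applying the induction hypothesis to $f_1$ to get $f_1 = c_1 + g_1$ gives $f = (\lambda w + c_1) + g_1$ with $\lambda w + c_1 \in C(I)$. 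If instead $w \notin N(I)$, then by definition $w = \mathrm{LM}(h)$ for some $h \in I$, which after scaling we may assume has leading coefficient $1$; letting $\lambda$ be the coefficient of $w$ in $f$, the element $f_1 = f - \lambda h$ satisfies $f_1 = 0$ or $\mathrm{LM}(f_1) < w$ (the $w$-terms cancel and $h$ contributes only strictly smaller monomials), and the induction hypothesis applied to $f_1$ together with $\lambda h \in I$ completes the step.

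The part requiring the most care is this second half: one must confirm that the reduction step genuinely strictly decreases the leading monomial in the deglex order, and that this order is well-founded on $X^\ast$ and not merely a total order, so that the induction is legitimate. Once those points are pinned down, the decomposition $F\langle X \rangle = C(I) \oplus I$ follows at once.
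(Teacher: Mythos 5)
Your proof is correct; the paper itself states this proposition without proof, recalling it as a standard fact from the theory of noncommutative Gr\"obner bases, and your argument (trivial intersection via leading monomials, plus surjectivity of $C(I)+I$ by Noetherian induction on the deglex well-order) is exactly the standard one. You rightly flag the two points that need care, namely that the reduction step strictly decreases $\mathrm{LM}(f)$ and that deglex is well-founded because $X$ is finite, so nothing is missing.
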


\begin{definition}
Let $G \subset F\langle X \rangle$ be a subset generating an ideal $I
\subset F\langle X \rangle$. A noncommutative polynomial $f \in F\langle X
\rangle$ is in \emph{normal form modulo} $G$ if no monomial occurring in $f$
has a factor of the form $\mathrm{LM}(g)$ for any $g \in G$.
A subset $G \subset I$
is a \emph{Gr\"obner basis} of $I$ if for all $f \in I$ there is a $g \in G$
such that $\mathrm{LM}(g)$ is a factor of $\mathrm{LM}(f)$.
A subset $G \subset F\langle X \rangle$ is \emph{self-reduced} if every $g
\in G$ is in normal form modulo $G \setminus \{g\}$ and every $g \in G$ is
\emph{monic}: the coefficient of $\mathrm{LM}(g)$ is 1.
\end{definition}

\begin{definition}
Let $g, h \in F \langle X \rangle$ be two monic noncommutative polynomials.
Assume that $\mathrm{LM}(g)$ is not a factor of $\mathrm{LM}(h)$ and that
$\mathrm{LM}(h)$ is not a factor of $\mathrm{LM}(g)$. Let $u, v \in X^\ast$ be
such that
$\mathrm{LM}(g)\,u = v\,\mathrm{LM}(h)$,
$u$ is a proper right factor of $\mathrm{LM}(h)$,
and
$v$ is a proper left factor of $\mathrm{LM}(g)$.
In this case the element $g u - v h \in F \langle X \rangle$ is called a
\emph{composition} of $g$ and $h$.
\end{definition}

\begin{theorem} \label{di}
If $I \subset F\langle X \rangle$ is an ideal generated by a self-reduced set
$G$, then $G$ is a Gr\"obner basis of $I$ if and only if for all compositions
$f$ of the elements of $G$ the normal form of $f$ modulo $G$ is zero.
\end{theorem}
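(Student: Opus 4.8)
The plan is to prove the equivalence in two steps; the forward implication is essentially formal, while the reverse implication --- the noncommutative analogue of Buchberger's criterion, also known as Bergman's diamond lemma --- carries the content. For the forward direction, assume $G$ is a Gr\"obner basis of $I$ and let $f = gu - vh$ be a composition of $g, h \in G$. Then $f \in I$, being an $F\langle X\rangle$-combination of $g$ and $h$. Each step of the reduction of $f$ modulo $G$ subtracts from the current polynomial a two-sided multiple $c\, w\, g'\, w'$ of some $g' \in G$, hence an element of $I$; so the normal form $\tilde f$ of $f$ modulo $G$ still lies in $I$. But no monomial of $\tilde f$ is divisible by any $\mathrm{LM}(g')$ ($g' \in G$), and since $G$ is a Gr\"obner basis such a monomial cannot be the leading monomial of a nonzero element of $I$; hence $\tilde f \in C(I)$. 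By Proposition~\ref{C(I)proposition}, $C(I) \cap I = \{0\}$, so $\tilde f = 0$.

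For the reverse direction, assume every composition of elements of $G$ reduces to $0$ modulo $G$. It suffices to prove that every nonzero $f \in I$ has $\mathrm{LM}(f)$ divisible by $\mathrm{LM}(g)$ for some $g \in G$, which is exactly the Gr\"obner basis property. Since $G$ generates $I$, write $f = \sum_{i} c_i\, p_i\, g_i\, q_i$ with $c_i \in F \setminus \{0\}$, $p_i, q_i \in X^\ast$, $g_i \in G$, and set $m_i = p_i\, \mathrm{LM}(g_i)\, q_i$. Among all such expressions of $f$ I would choose one minimizing $m := \max_i m_i$ in deglex order and, among those, minimizing $t := \#\{\, i : m_i = m \,\}$; one may also assume the triples $(p_i, g_i, q_i)$ with $m_i = m$ are pairwise distinct, since combining equal ones only lowers $t$. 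If $m = \mathrm{LM}(f)$ then $\mathrm{LM}(g_i) \mid \mathrm{LM}(f)$ for any $i$ with $m_i = m$, and we are done; so everything comes down to excluding $m > \mathrm{LM}(f)$.

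Suppose $m > \mathrm{LM}(f)$. Since each $g_i$ is monic, the coefficient of $m$ in $\sum_i c_i p_i g_i q_i$ equals $\sum_{i : m_i = m} c_i$, which must vanish, so $t \ge 2$. Fix indices $1, 2$ with $m_1 = m_2 = m$, put $r_i = g_i - \mathrm{LM}(g_i)$ (supported strictly below $\mathrm{LM}(g_i)$), and examine how $\mathrm{LM}(g_1)$ and $\mathrm{LM}(g_2)$ are situated inside $m$. If the two occurrences are disjoint --- say, after relabelling, $m = \alpha\, \mathrm{LM}(g_1)\, \gamma\, \mathrm{LM}(g_2)\, \beta$ with $p_1 = \alpha$, $q_1 = \gamma\, \mathrm{LM}(g_2)\, \beta$, $p_2 = \alpha\, \mathrm{LM}(g_1)\, \gamma$, $q_2 = \beta$ --- then the identity $\alpha\, \mathrm{LM}(g_1)\, \gamma\, g_2\, \beta - \alpha\, g_1\, \gamma\, \mathrm{LM}(g_2)\, \beta = \alpha\, g_1\, \gamma\, r_2\, \beta - \alpha\, r_1\, \gamma\, g_2\, \beta$ exhibits its left side as a sum of two-sided multiples of $g_1, g_2$ with leading monomials below $m$; substituting it into $c_1 p_1 g_1 q_1 + c_2 p_2 g_2 q_2$ converts the two level-$m$ terms into the single term $(c_1+c_2)\, \alpha\, \mathrm{LM}(g_1)\, \gamma\, g_2\, \beta$ together with terms below level $m$, yielding an expression for $f$ with the same $m$ but smaller $t$ --- a contradiction. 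If instead the occurrences overlap, then since $G$ is self-reduced neither of $\mathrm{LM}(g_1), \mathrm{LM}(g_2)$ is a factor of the other (and if $g_1 = g_2$ the two occurrences begin at distinct positions, so the overlap is proper); hence the overlapping region is $\mathrm{LM}(g_1)\, u = v\, \mathrm{LM}(g_2)$ for a proper right factor $u$ of $\mathrm{LM}(g_2)$ and a proper left factor $v$ of $\mathrm{LM}(g_1)$, $g_1 u - v g_2$ is a composition of $G$, and $m = \alpha\, v\, \mathrm{LM}(g_2)\, \beta$ with $p_1 = \alpha$, $q_1 = u\beta$, $p_2 = \alpha v$, $q_2 = \beta$. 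By hypothesis $g_1 u - v g_2 = \sum_j d_j\, a_j\, g_j'\, b_j$ with every $a_j\, \mathrm{LM}(g_j')\, b_j$ strictly below $v\, \mathrm{LM}(g_2)$; combining this with $c_1 p_1 g_1 q_1 + c_2 p_2 g_2 q_2 = c_1\, \alpha(g_1 u - v g_2)\beta + (c_1+c_2)\, \alpha\, v\, g_2\, \beta$ again replaces the two level-$m$ terms by at most one, lowering $t$ without raising $m$ --- a contradiction. Therefore $m = \mathrm{LM}(f)$, which completes the proof.

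I expect the overlap case to be the one genuine obstacle: one has to recognize that an overlap of two leading monomials inside $m$ is precisely a composition, handle the small self-overlap subtlety when $g_1 = g_2$, and check that the terms introduced via the hypothesis do drop strictly below level $m$ --- which rests on the standard fact that a polynomial reducing to $0$ modulo $G$ is a sum of two-sided multiples of elements of $G$ whose leading monomials do not exceed its own leading monomial. The forward direction (via Proposition~\ref{C(I)proposition}) and the minimal-expression bookkeeping are routine.
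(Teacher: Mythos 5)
The paper does not prove Theorem~\ref{di} at all: it is recalled as a known result (the composition lemma, i.e.\ Bergman's diamond lemma) from the cited literature, so there is no in-paper argument to compare against. Your proposal supplies the standard proof of that result, and it is correct. The forward direction via $I \cap C(I) = \{0\}$ (Proposition~\ref{C(I)proposition}) is right, since a polynomial in normal form modulo a Gr\"obner basis has all its monomials in $N(I)$. The reverse direction is the classical Bergman/Buchberger induction: minimize $m = \max_i p_i\,\mathrm{LM}(g_i)\,q_i$ over representations $f = \sum_i c_i p_i g_i q_i$, then minimize the number $t$ of terms attaining $m$; the vanishing of the coefficient of $m$ forces $t \ge 2$ when $m > \mathrm{LM}(f)$, and the disjoint-occurrence identity $\alpha\,\mathrm{LM}(g_1)\gamma g_2\beta - \alpha g_1\gamma\,\mathrm{LM}(g_2)\beta = \alpha g_1\gamma r_2\beta - \alpha r_1\gamma g_2\beta$ and the overlap case (which, thanks to self-reducedness and your correct handling of the $g_1 = g_2$ self-overlap, is exactly a composition in the sense of the paper's definition) each lower $(m,t)$ lexicographically, giving the contradiction. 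The only point you defer --- that a polynomial reducing to $0$ modulo $G$ can be written as $\sum_j d_j a_j g_j' b_j$ with every $a_j\,\mathrm{LM}(g_j')\,b_j$ bounded by its leading monomial --- is a one-line induction on the reduction steps (each step removes a monomial $w = a\,\mathrm{LM}(g')\,b \le \mathrm{LM}$ of the current polynomial and introduces only smaller ones), and you state it accurately; termination of reduction uses that deglex on $X^*$ is a well-order for finite $X$. In short: the proof is complete and correct, and it is the expected one; its value here is that it makes the paper's black-box citation self-contained.
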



\section{The universal associative enveloping algebra }

Let $\J$ be the anti-Jordan triple system of all $n \times n$ matrices over an algebraically closed field $F$ of characteristic 0 with triple product
$\langle a,b,c\rangle =  abc - cba.$

\begin{definition}
Let $ \Omega = \{ 1,2,\dots,n\}$ be a finite index set.
Let $B = \{E_{i,j}\}_{i,j \in \Omega}$ be an ordered basis of $\J$, where $E_{i,j}$ is the matrix with a single 1,
in the $i$th row and $j$th column, and zeros elsewhere. The structure constants for $\J$ are
\[
\langle E_{i,j} , E_{k,\ell}, E_{m,t}\rangle = \delta_{j,k} \delta_{\ell,m} E_{i,t} - \delta_{t,k} \delta_{\ell,i} E_{m,j},
\quad \text{for all } \,\, i,j,k,\ell,m,t\in \Omega.
\]
Consider the bijection $\phi\colon B \to X = \{ e_{i,j}\}_{i,j \in \Omega}$ defined by
$\phi(E_{i,j}) = e_{i,j}$. We extend $\phi$ to a linear map $\phi\colon \J \to F \langle
X \rangle$.
Throughout this paper we use the deglex order $<$ where $e_{i,j} < e_{k,\ell}$ if either $i<k$, or $i=k$ and $j<\ell$.
\end{definition}

\begin{definition}\label{1}
Let $G \subset F \langle X \rangle$ consist of these elements ($i, j, k, r, s, t \in \Omega$):
\allowdisplaybreaks
\begin{align*}
\mathcal{R}_1^{(i,j,k,t)} &= e_{i,j} e_{j,k} e_{k,t} - e_{k,t} e_{j,k} e_{i,j}- e_{i,t}\quad (k<i),
\\
\mathcal{R}_2^{(i,j,t)} &= e_{i,j} e_{j,i} e_{i,t} - e_{i,t} e_{j,i} e_{i,j}- e_{i,t}\quad  (t<j),
\\
\mathcal{R}_3^{(i,j,k,t)} &= e_{i,j} e_{k,i} e_{t,k} - e_{t,k} e_{k,i} e_{i,j}+ e_{t,j}\quad (t<i),
\\
\mathcal{R}_4^{(i, j,k)} &= e_{i,j} e_{k,i} e_{i,k} - e_{i,k} e_{k,i} e_{i,j}+ e_{i,j}\quad (k<j),
\\
\mathcal{R}_5^{(i,j,k,t,r,s)} &= e_{i,j}e_{k,t}e_{r,s} - e_{r,s} e_{k,t} e_{i,j}
\quad
\left(r<i,\, j \neq k \,\,\text{or}\,\, t \neq r, \, s \neq k \,\,\text{or}\,\, t \neq i \right),
\\
\mathcal{R}_6^{(i,j,k,t,s)} &= e_{i,j}e_{k,t}e_{i,s} - e_{i,s} e_{k,t} e_{i,j}
\quad
\left(s<j,\, j\neq k \,\,\text{or}\,\, t \neq i,\, s\neq k\,\, \text{or}\,\, t \neq i \right).
\end{align*}
Let $I \subset F\langle X \rangle$ be the ideal generated by $G$. We write $\mathfrak{A} = F\langle X \rangle / I$ with surjection $\pi\colon
F\langle X \rangle \to \mathfrak{A}$ sending $f$ to $f + I$, and $i = \pi \circ \phi$ for
the natural map $i \colon \J \to \mathfrak{A}$.
\end{definition}

\begin{lemma}
The unital associative algebra $\mathfrak{A}$ and the linear map $i$ form the universal
associative envelope of the anti-Jordan triple system $\J$.
\end{lemma}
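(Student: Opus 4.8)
The plan is to verify the defining universal property of the associative envelope: for every associative algebra $A$ and every representation $\rho\colon\J\to A_{-}$, there exists a unique algebra homomorphism $\tilde\rho\colon\mathfrak{A}\to A$ with $\tilde\rho\circ i=\rho$. The standard strategy is to first show that the relations $\mathcal R_1,\dots,\mathcal R_6$ are precisely (a rewriting of) the identities $\rho(\langle E_{i,j},E_{k,\ell},E_{m,t}\rangle)=\rho(E_{i,j})\rho(E_{k,\ell})\rho(E_{m,t})-\rho(E_{m,t})\rho(E_{k,\ell})\rho(E_{i,j})$ imposed on the free algebra $F\langle X\rangle$; then the quotient $\mathfrak{A}$ is manifestly the initial object among algebras equipped with such a map, which is exactly the universal property.

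Concretely, first I would recall the generic construction: let $J$ be the ideal of $F\langle X\rangle$ generated by all elements of the form $\phi(\langle u,v,w\rangle)-\bigl(\phi(u)\phi(v)\phi(w)-\phi(w)\phi(v)\phi(u)\bigr)$ for $u,v,w$ ranging over the basis $B$, and set $U(\J)=F\langle X\rangle/J$ with the obvious map $\iota\colon\J\to U(\J)$. A routine check shows $(U(\J),\iota)$ satisfies the universal property: given $\rho$, the substitution $e_{i,j}\mapsto\rho(E_{i,j})$ extends to an algebra map $F\langle X\rangle\to A$ that kills every generator of $J$ (because $\rho$ is a triple-system homomorphism), hence factors through $U(\J)$; uniqueness holds because the $e_{i,j}$ generate $U(\J)$ as an algebra. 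So it remains only to identify $\mathfrak{A}$ with $U(\J)$, i.e.\ to prove $I=J$ as ideals of $F\langle X\rangle$.

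Next I would establish $I=J$ by double inclusion. For $I\subseteq J$: using the structure constants
$\langle E_{i,j},E_{k,\ell},E_{m,t}\rangle=\delta_{j,k}\delta_{\ell,m}E_{i,t}-\delta_{t,k}\delta_{\ell,i}E_{m,j}$,
each generator $\mathcal R_1^{(i,j,k,t)},\dots,\mathcal R_6^{(i,j,k,t,r,s)}$ is obtained by writing out the corresponding relation $\phi(\langle\cdot,\cdot,\cdot\rangle)=\phi(\cdot)\phi(\cdot)\phi(\cdot)-\phi(\cdot)\phi(\cdot)\phi(\cdot)$ for a suitable triple of basis matrices and specialising the Kronecker deltas: the cases $\mathcal R_1,\mathcal R_2,\mathcal R_3,\mathcal R_4$ correspond to index patterns where exactly one of the two delta-products survives (giving the $\pm e_{i,t}$ or $\pm e_{t,j}$ term), while $\mathcal R_5,\mathcal R_6$ correspond to patterns where both delta-products vanish (giving a pure commutator-type relation with no linear term); the side conditions such as $k<i$ or $t<j$ are just the deglex normalisation choosing which of the two orderings of a relation to record. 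Conversely, for $J\subseteq I$, I would show that \emph{every} defining relation $\phi(\langle E_{i,j},E_{k,\ell},E_{m,t}\rangle)-\phi(E_{i,j})\phi(E_{k,\ell})\phi(E_{m,t})+\phi(E_{m,t})\phi(E_{k,\ell})\phi(E_{i,j})$ lies in $I$, by the same enumeration of delta patterns: each such relation is, up to sign and relabelling, one of the listed $\mathcal R_\bullet$ when the triple is already in the required order, and when it is not one uses the antisymmetry $\langle abc\rangle=-\langle cba\rangle$ (which is built into the form of the relation, since swapping $a\leftrightarrow c$ negates both the left and right sides simultaneously) to reduce to the ordered case. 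Since $I=J$, the map $i=\pi\circ\phi$ coincides with $\iota$ under this identification, and the lemma follows.

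The main obstacle is the bookkeeping in the inclusion $J\subseteq I$: one must check that the six families $\mathcal R_1$–$\mathcal R_6$ together with the antisymmetry relation really exhaust all $n^6$ specialisations of the triple product, i.e.\ that no index configuration produces a relation not in the ideal generated by $G$. This is a finite but somewhat delicate case analysis on how the indices $i,j,k,\ell,m,t$ can coincide (controlling which of $\delta_{j,k}\delta_{\ell,m}$ and $\delta_{t,k}\delta_{\ell,i}$ is nonzero) combined with the deglex-order side conditions; I expect the bulk of the work, and the only real content beyond the generic universal-property argument, to be the verification that these side conditions are chosen consistently so that $G$ generates exactly $J$ and nothing more. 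No deep idea is needed — it is the careful matching of the combinatorial data of Definition~\ref{1} against the structure constants — but it is the step where an error would most easily hide.
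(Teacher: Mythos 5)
The paper states this lemma without any proof, so there is no argument of the author's to compare against; your proposal supplies the standard one, and it is correct in structure. The universal-property part (the envelope is $F\langle X\rangle/J$ where $J$ is generated by the elements $\phi(u)\phi(v)\phi(w)-\phi(w)\phi(v)\phi(u)-\phi(\langle uvw\rangle)$ for basis triples, with the factorization and uniqueness arguments you give) is routine and right, and your reduction of the lemma to the ideal identity $I=J$ is exactly the content that needs checking. Your description of that check is also sound: each family $\mathcal{R}_1,\dots,\mathcal{R}_6$ is the written-out relation for the index pattern in which the indicated Kronecker products survive or vanish, the diagonal triples $(a,b,a)$ give the zero relation, and antisymmetry $\langle abc\rangle=-\langle cba\rangle$ reduces every triple to the deglex-ordered one. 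One small caution for the write-up: the matching after reversal is not always a relabelling \emph{within the same family} --- for instance the relation for $(E_{i,j},E_{j,i},E_{i,t})$ with $t>j$ is not an instance of $\mathcal{R}_2$ but, after reversing the triple, becomes $\mathcal{R}_4^{(i,t,j)}$; so the case analysis you defer genuinely mixes the families $\mathcal{R}_1$--$\mathcal{R}_4$ (and $\mathcal{R}_5$ with $\mathcal{R}_6$), and a complete proof should exhibit that matching explicitly rather than assert it.
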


Our goal in the rest of this section is to derive a Gr\"obner basis for the ideal $I$ from the set $G$ of generators.
This will be achieved by repeatedly calculating normal forms of compositions of generators.

\begin{definition}
We write $\delta_{i,j}$ for the Kronecker delta, and $\widehat{\delta}_{i,j} = 1 - \delta_{i,j}$.
\end{definition}

\begin{lemma}\label{first}
The set of all normal forms modulo $G$ of nontrivial compositions among elements of $G$ includes the set $G_1$ which consists of the elements:
\begin{align*}
\mathcal{G}^{(r,t,m)}_{1} &= e_{r,t} e_{t,m} - e_{r,1} e_{1,m} \quad  ( m\neq r,\, t\neq 1),
\\
\mathcal{G}^{(i,t,\ell)}_{2} &= e_{i,t} e_{\ell,i} - e_{1,t} e_{\ell,1}\quad  (t \neq \ell,\,i\neq 1),
\\
\mathcal{G}^{(i,j,k,\ell)}_{3} &= e_{i,j} e_{k,\ell}\quad  (i \neq \ell,\, j\neq k).
\end{align*}
\end{lemma}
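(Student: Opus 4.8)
The plan is to produce the elements of $G_1$ as normal forms of compositions among the generators $\mathcal{R}_1,\dots,\mathcal{R}_6$ in Definition \ref{1}, using the deglex order in which the leading monomial of each $\mathcal{R}_p$ is its first term $e_{i,j}e_{k,t}e_{r,s}$ (the one with the indices in increasing position). First I would set up, for each pair $(\mathcal{R}_p,\mathcal{R}_q)$, the possible overlaps of leading monomials: since each leading monomial has degree $3$, a nontrivial composition arises when a proper right factor of one (of degree $1$ or $2$) equals a proper left factor of the other. I would organize the bookkeeping by the shape of the overlap word $u$ (length $1$ or length $2$) and by which generators can share that overlap given their index constraints. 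Then for each admissible overlap I would form the composition $g u - v h$ and reduce it modulo $G$ by repeatedly rewriting any monomial containing a leading term of some $\mathcal{R}_p$.

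The key computational steps are the reductions themselves. In each case the composition $gu-vh$ is a linear combination of degree-$4$ and degree-$3$ monomials; rewriting the degree-$4$ monomials via $\mathcal{R}_1,\dots,\mathcal{R}_4$ (which have a $\pm e_{i,t}$-type correction term) lowers degree and produces degree-$3$ and degree-$2$ pieces, while $\mathcal{R}_5,\mathcal{R}_6$ are homogeneous and merely permute the outer factors. After collecting, the surviving normal form in the cases that matter turns out to be a degree-$2$ element, and I would check that these degree-$2$ normal forms are exactly (up to sign and relabelling) the three families $\mathcal{G}^{(r,t,m)}_1$, $\mathcal{G}^{(i,t,\ell)}_2$, $\mathcal{G}^{(i,j,k,\ell)}_3$, with the stated side conditions $m\neq r,\ t\neq 1$; $t\neq \ell,\ i\neq 1$; and $i\neq \ell,\ j\neq k$. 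Concretely, I expect $\mathcal{G}_1$ and $\mathcal{G}_2$ to emerge from compositions of an $\mathcal{R}_1$- or $\mathcal{R}_2$-type relation with an $\mathcal{R}_5$- or $\mathcal{R}_6$-type relation (the homogeneous ones being what forces the ``reduce to the index-$1$ normal form'' phenomenon), and $\mathcal{G}_3$ to emerge from overlaps that annihilate all correction terms, leaving a bare quadratic monomial $e_{i,j}e_{k,\ell}$ with $i\neq\ell$, $j\neq k$.

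Since the statement only asserts that $G_1$ is \emph{included} in the set of all such normal forms, I do not need to enumerate every composition or show the list is complete; it suffices to exhibit, for each of the three families and each admissible choice of indices, one composition whose normal form modulo $G$ is the stated element (and to note that it is itself in normal form, i.e. contains no factor equal to a leading monomial of $G$, which is immediate for degree-$2$ words). I would present this as three blocks of case analysis, one per family, each block choosing a convenient overlap and carrying out the reduction; using the $\widehat{\delta}$ notation introduced just above to handle the side conditions uniformly.

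The main obstacle will be the case bookkeeping for $\mathcal{R}_5$ and $\mathcal{R}_6$: their index constraints are disjunctions (``$j\neq k$ or $t\neq r$'', etc.), so when forming a composition one must carefully track which subcases of the disjunction are active, and in the ``exceptional'' subcases where the constraint fails, the relevant relation is instead one of $\mathcal{R}_1$–$\mathcal{R}_4$ (with its extra term), which changes the outcome of the reduction. Keeping these alternatives straight — and making sure the degree-$3$ correction terms always cancel or reduce further so that a clean degree-$2$ normal form remains — is the delicate part; everything else is mechanical rewriting in $F\langle X\rangle$ under deglex.
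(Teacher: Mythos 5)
Your overall strategy coincides with the paper's: identify the length-$1$ and length-$2$ overlaps of leading monomials among $\mathcal{R}_1,\dots,\mathcal{R}_6$ (your identification of the leading monomial of each $\mathcal{R}_p$ as its first term is correct under the stated conditions $k<i$, $t<j$, etc.), form the compositions $gu-vh$, reduce modulo $G$, and observe that for the inclusion claim it suffices to exhibit one composition per target element and note that degree-$2$ words are automatically in normal form. That reduction of the task is legitimate. However, the proposal stops at the level of a plan and never performs a single reduction, and in doing so it misses a concrete obstruction that the actual computation runs into: for $n=2$ the ``generic'' compositions you envision cannot produce three of the required elements of the third family, namely $\mathcal{G}_3^{(1,2,1,2)}=e_{1,2}^2$, $\mathcal{G}_3^{(2,2,1,1)}=e_{2,2}e_{1,1}$ and $\mathcal{G}_3^{(2,1,2,1)}=e_{2,1}^2$, because the index inequalities attached to the compositions (of the form $(r,\ell)<(i,k)$, $r<i$, etc.) cannot be satisfied with only two indices. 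The paper has to introduce three ad hoc compositions there, e.g. $\mathcal{R}_6^{(1,2,1,2,1)}e_{1,1}-e_{1,2}\mathcal{R}_4^{(1,2,1)}$, whose normal forms are $-2e_{1,2}^2$ and so on; note that in these cases the correction terms do \emph{not} all annihilate (contrary to your heuristic that $\mathcal{G}_3$ comes from overlaps killing every correction term) — they combine with coefficient $-2$. Since the main theorem is stated for all $n\ge 2$, this is a genuine gap, not a cosmetic one.

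A secondary point: your guess about which pairings produce which family is partly off. In the paper $\mathcal{G}_2$ does come from an $\mathcal{R}_1$-with-$\mathcal{R}_5$ overlap and $\mathcal{G}_3$ from $\mathcal{R}_1$-with-$\mathcal{R}_5$ and $\mathcal{R}_5$-with-$\mathcal{R}_5$ overlaps, but $\mathcal{G}_1^{(r,t,m)}=e_{r,t}e_{t,m}-e_{r,1}e_{1,m}$ arises from the overlap of two \emph{inhomogeneous} relations, $\mathcal{R}_2^{(r,t,s)}e_{s,m}-e_{r,t}\mathcal{R}_1^{(t,r,s,m)}$; the ``normalization to index $1$'' comes simply from choosing $s=1$ in that composition (or, as the paper does, from a subsequent self-reduction of the family $e_{r,t}e_{t,m}-e_{r,s}e_{s,m}$), not from the homogeneous relations. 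Without carrying out at least one reduction in each family — including the bookkeeping of when the Kronecker-delta terms such as $\delta_{m,r}e_{r,s}e_{s,r}$ survive or cancel — the proposal does not yet establish that the stated degree-$2$ elements, with exactly the side conditions $m\neq r$, $t\neq 1$; $t\neq\ell$, $i\neq 1$; $i\neq\ell$, $j\neq k$, are what actually comes out.
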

\begin{proof}
For all $s<t$, we consider the following composition:
\[
S = \mathcal{R}_2^{(r,t,s)} e_{s,m} - e_{r,t} \mathcal{R}_1^{(t,r,s,m)}.
\]
We eliminate from $S$ all occurrences of the leading monomials of $G$ as factors in the monomials;
we write $\equiv$ to indicate congruence modulo $G$:
\begin{dmath*}
  S  =  - e_{r,s} e_{t,r} e_{r,t}e_{s,m} - e_{r,s}e_{s,m}+ e_{r,t} e_{s,m} e_{r,s} e_{t,r} + e_{r,t}e_{t,m}
 \equiv - e_{r,s} \left( e_{s,m}e_{r,t} e_{t,r}- \delta_{m,r} e_{s,r}\right)- e_{r,s}e_{s,m}+\left(e_{r,s} e_{s,m}e_{r,t}   - \delta_{m,r}e_{r,t}\right)e_{t,r}+ e_{r,t} e_{t,m}
 = \delta_{m,r} e_{r,s}  e_{s,r}- e_{r,s}e_{s,m}- \delta_{m,r}e_{r,t}e_{t,r}+ e_{r,t} e_{t,m} ,
\end{dmath*}
using the relations $\mathcal{R}_3^{(t,r,r,s)}$, $\mathcal{R}_5^{(t,r,r,t,s,m)}$,  $\mathcal{R}_4^{(r,t,s)}$ and $\mathcal{R}_6^{(r,t,s,m,s)}$.
Clearly, if  $m = r$ then  $S \equiv 0$.  Assume $m\neq r$ and obtain the set $L$ of nonzero normal forms of $S$ modulo $G$:
\[
L= \big\{ \, \mathcal{N}^{(r,t,m,s)} =  e_{r,t} e_{t,m} - e_{r,s}e_{s,m}\mid \text{for all}\,\,s< t,  \, m\neq r \, \big\}.
\]
The set $L$ is not self-reduced. Therefore, for all $ 1<s<t \leq n$,
we eliminate from the element $\mathcal{N}^{(r,t,m,s)}$ occurrence of the leading monomial of $\mathcal{N}^{(r,s,m,1)}$
and obtain a self-reduced set consisting of the elements $\mathcal{G}^{(r,t,m)}_{1}$.

For all $ (r, \ell)< (i,k)$, we consider the following composition:
\begin{align*}
 S_1 & = \mathcal{R}_1^{(i,k,r,t)} e_{\ell,s} - e_{i,k}\mathcal{R}_5^{(k,r,r,t,\ell,s)}
\quad ( t\neq \ell, \,\, \text{and} \,\, s \neq r \,\, \text{or}   \,\, t\neq k ).
\end{align*}
We eliminate from $S_1$ all occurrences of the leading monomials of elements of $G$:
\begin{align*}
S_1 &= - e_{r,t} e_{k,r} e_{i,k} e_{\ell ,s}- e_{i,t} e_{\ell ,s}
 +e_{i,k} e_{\ell ,s} e_{r,t} e_{k,r}
 \\
 & \equiv - e_{r,t} (e_{\ell ,s} e_{i,k} e_{k,r}  - \delta_{s,i} e_{\ell,r}) - e_{i,t} e_{\ell ,s}+ e_{r,t}  e_{\ell ,s} e_{i,k} e_{k,r}
 \\
 & = \delta_{s,i} e_{r,t}e_{\ell,r} - e_{i,t} e_{\ell ,s},
\end{align*}
\allowdisplaybreaks
using the relations $\mathcal{R}_3^{(k,r,i,\ell)} $, $\mathcal{R}_5^{(k,r,i,k,\ell,s)}$ and $\mathcal{R}_5^{(i,k,\ell,s,r,t)} $.
Hence, for all $ (r,\ell)<(i,k) $, the possible (monic) normal forms of $S_1$ are
\begin{align}
  e_{i,t} e_{\ell,s}\,\,  (\text{if}\,\, i\neq s),
  \qquad
  \qquad
  e_{i,t} e_{\ell,i} - e_{r,t} e_{\ell,r}\,\, (\text{if}\,\, i = s).\label{nf1}
  \end{align}
For all $(r,t)< (i,k)$, we consider the following composition:
\begin{dmath*}S_2 = \mathcal{R}_5^{(i,j,k,\ell,r,s)} e_{t,m} - e_{i,j} \mathcal{R}_5^{(k,\ell,r,s,t,m)}
\\
{( s\neq k\,\, \text{or}\,\, \ell\neq i , j\neq k\,\, \text{or}\,\,\ell \neq r,  m\neq r\,\, \text{or}\,\, s\neq k )\,\, \text{and}\,\, (\ell\neq r\,\, \text{or}\,\,s \neq t)}.
\end{dmath*}
We eliminate from $S_2$ all occurrences of the leading monomials of elements of $G$:
\begin{dmath*}
 S_2 =  - e_{r,s} e_{k,\ell} e_{i,j}e_{t,m} +  e_{i,j}  e_{t,m} e_{r,s} e_{k,\ell},
 \equiv -e_{r,s}\left( e_{t,m} e_{i,j} e_{k,\ell} + \delta_{\ell,i} \delta_{j,t} e_{k,m} - \delta_{j,k} \delta_{m,i} e_{t,\ell}\right) + \left(e_{r,s} e_{t,m} e_{i,j}+ \delta_{j,t} \delta_{m,r} e_{i,s}-\delta_{s,t} \delta_{m,i} e_{r,j}\right) e_{k,\ell}
 = \delta_{j,t}\left( - \delta_{\ell,i} e_{r,s} e_{k,m} + \delta_{m,r} e_{i,s}e_{k,\ell}\right) - \delta_{m,i} \left( -\delta_{j,k} e_{r,s} e_{t,\ell} + \delta_{s,t} e_{r,j} e_{k,\ell}\right),
   \end{dmath*}
using the relations $\mathcal{R}_1^{(k,\ell,j,m)}\!\!, \mathcal{R}_3^{(k,\ell,m,t)}\!\!,\mathcal{R}_5^{(k,\ell,i,j,t,m)}\!\!, \mathcal{R}_1^{(i,j,r,s)}\!\!,  \mathcal{R}_3^{(i,j,t,r)}\!$ and $\mathcal{R}_5^{(i,j,t,m,r,s)}$.
We first note that if $(m, j, s) = (i, k, t)$ then the (monic) normal form of $S_2$ modulo $G$ coincides with the element $\mathcal{N}^{(r,k,\ell,t)}$,
so we ignore this case.
For all $(r,t)<(i,k)$, the possible non-zero (monic) normal forms of $S_2$ modulo $G$ are
\begin{alignat}{2} \label{nf2}
&
e_{r,s} e_{k,m} \;\;   (m\neq r, s\neq k) , &\qquad
&
e_{i,s} e_{k,\ell}\;\;  (\ell \neq i, s\neq k),
\notag
\\
&
e_{r,s} e_{t,\ell}\;\;  (r\neq \ell, s\neq t), &\qquad
&
e_{r,j} e_{k,\ell}\;\;  (\ell\neq r,j \neq  k),
\\
&
e_{r,t} e_{k,i}\;\;  (t\neq k,\, r < i), &\qquad
&
e_{i,s} e_{k,i}- e_{r,s} e_{k,r}\quad  ( s\neq k).
\notag
\end{alignat}
Combining \eqref{nf1} and \eqref{nf2} gives all the possible normal forms of $S_1$ and $S_2$:
\begin{align*}
& \mathcal{L}^{(i,s,k,r)}= e_{i,s}e_{k,i}- e_{r,s} e_{k,r}\quad  (r<i,\, s\neq k), 
\\
&\mathcal{G}_3^{(i,j,k,\ell)} = e_{i,j} e_{k,\ell}\quad (i\neq k,\, j\neq \ell).\end{align*}
We observe that the set
$\big\{\mathcal{L}^{(i,s,k,r)}\mid \text{for all}\,\,r< i,  \, s\neq k \big\}$ is not self-reduced. Therefore, for all $ 1<r< i\leq n$,
we eliminate from  the element $\mathcal{L}^{(i,s,k,r)}$ occurrence of the leading monomial of
$\mathcal{L}^{(r,s,k,1)}$ and obtain a self-reduced set consisting of the elements $\mathcal{G}_2^{(i,s,k)}$.
For $n = 2$, we cannot obtain $ \mathcal{G}^{(1,2,1,2)}_{3}$,  $ \mathcal{G}^{(2,2,1,1)}_{3}$ and $ \mathcal{G}^{(2,1,2,1)}_{3}$
from $S_1$ or $S_2$. Thus, we consider three more compositions in this case:
\begin{align*}
& S_3 = \mathcal{R}_6^{(1,2,1,2,1)} e_{1,1}- e_{1,2}\mathcal{R}_4^{(1,2,1)},\qquad S_4 = \mathcal{R}_1^{(2,2,1,2)}e_{1,1}-e_{2,2} \mathcal{R}_3^{(2,1,1,1)},
\\
& S_5 = \mathcal{R}_2^{(2,2,1)} e_{2,1}- e_{2,2}\mathcal{R}_6^{(2,2,2,1,1)}.
\end{align*}
We eliminate from $S_3$ all the leading monomials of elements of $G$ and obtain
\begin{dmath*}
S_3= - e_{1,1} e_{1,2}^2 e_{1,1} + e_{1,2} e_{1,1}^2 e_{1,2} - e_{1,2}^2
  \equiv {-e_{1,1}\left(e_{1,1} e_{1,2}^2\right)+ \left(e_{1,1}^2 e_{1,2}- e_{1,2}\right)e_{1,2}- e_{1,2}e_{1,2}
 = -2 e_{1,2}^2,}
\end{dmath*}
using the relations $\mathcal{R}_6^{(1,2,1,2,1)}$ and $\mathcal{R}_4^{(1,2,1)}$.
Similarly, we can show that $ S_4 \equiv -2e_{2,2} e_{1,1}$ and $ S_5 \equiv -2 e_{2,1}e_{2,1}$.
The monic forms of the last three elements give the required elements. This completes the proof.
\end{proof}

\begin{lemma}\label{second}
The set of all normal forms modulo $G \cup G_1$ of nontrivial compositions among elements of $G \cup G_1$ includes the set $G_2$ which consists of the elements:
\begin{align*}
 & \mathcal{G}^{(r, i)}_{4} = e_{r, i} e_{i,r} - e_{r,1} e_{1,r} + e^2_{1,1} - e_{1,i} e_{i,1} \quad  ( r,i \in \Omega \setminus \{1\}).
\end{align*}
\end{lemma}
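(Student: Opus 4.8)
First, note that for $r,i\in\Omega\setminus\{1\}$ the monomial $e_{r,i}e_{i,r}$ matches none of the leading‑monomial patterns $\mathrm{LM}(\mathcal{G}_1)$, $\mathrm{LM}(\mathcal{G}_2)$, $\mathrm{LM}(\mathcal{G}_3)$ (in each case the side condition attached to the pattern would force some index to differ from itself), and it has degree $2$, so it is a normal word modulo $G\cup G_1$; hence $\mathcal{G}_4^{(r,i)}$ is a genuine candidate relation and the only issue is to produce it as a normal form of a composition. The proof will then be a direct continuation of the computation of Lemma~\ref{first}, again via Theorem~\ref{di}: I would run through the nontrivial compositions of pairs of elements of $G\cup G_1$, reduce each one to normal form modulo $G\cup G_1$, collect the nonzero results, and finish with a self‑reduction. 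The compositions of two elements of $G$ were already disposed of in Lemma~\ref{first}, so the new work concerns compositions in which at least one overlapping polynomial is one of $\mathcal{G}^{(\cdot)}_1,\mathcal{G}^{(\cdot)}_2,\mathcal{G}^{(\cdot)}_3$. Every leading monomial in $G\cup G_1$ has degree at most $3$, so each such composition has degree at most $4$, and its reduction is governed by the cubic rules $\mathcal{R}_1,\dots,\mathcal{R}_6$ together with the quadratic rules $e_{r,t}e_{t,m}\mapsto e_{r,1}e_{1,m}$, $e_{i,t}e_{\ell,i}\mapsto e_{1,t}e_{\ell,1}$, $e_{i,j}e_{k,\ell}\mapsto 0$ read off from $\mathcal{G}_1,\mathcal{G}_2,\mathcal{G}_3$.

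Concretely, I would organize the overlaps by the shape of the overlap monomial and work through the overlaps of a cubic relation $\mathcal{R}^{(\cdots)}_j$ with a quadratic relation $\mathcal{G}^{(\cdots)}_1$ or $\mathcal{G}^{(\cdots)}_2$, as well as the overlaps of two quadratic relations. To reach $\mathcal{G}_4$ one fixes $r,i\in\Omega\setminus\{1\}$ together with an auxiliary index $k$, chooses an overlap whose monomial is the ``trace‑type'' word on the indices $r,i,k$ (of the form $e_{r,i}e_{i,k}e_{k,r}$, up to the symmetries present in the $\mathcal{R}_j$), forms the composition polynomial $S$, and eliminates from $S$ every occurrence of a leading monomial of $G\cup G_1$. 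I expect most of the degree‑three words of $S$ to be annihilated by the $\mathcal{G}_3$‑rule and the surviving one to collapse through an $\mathcal{R}_1$‑type rule of the form $e_{a,1}e_{1,1}e_{1,b}\mapsto e_{1,b}e_{1,1}e_{a,1}+e_{a,b}$, leaving a degree‑two normal form whose leading monomial is $e_{r,i}e_{i,r}$ and whose remaining monomials all have the idempotent shape $e_{a,b}e_{b,a}$. This produces a family $\{\mathcal{M}^{(r,i,k)}\}_{k\in\Omega}$ of normal forms of $S$.

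As happened in Lemma~\ref{first} with the families $\{\mathcal{N}^{(r,t,m,s)}\}$ and $\{\mathcal{L}^{(i,s,k,r)}\}$, the family $\{\mathcal{M}^{(r,i,k)}\}$ will not be self‑reduced — all of its members share the leading monomial $e_{r,i}e_{i,r}$ — so the last step is to eliminate from each $\mathcal{M}^{(r,i,k)}$ the leading monomial of the base member $\mathcal{M}^{(r,i,1)}$; this should remove the dependence on $k$ and leave precisely the element $\mathcal{G}_4^{(r,i)}=e_{r,i}e_{i,r}-e_{r,1}e_{1,r}+e^2_{1,1}-e_{1,i}e_{i,1}$ for each $r,i\in\Omega\setminus\{1\}$, which is the assertion. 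As at the end of the proof of Lemma~\ref{first}, the case $n=2$ must be handled separately: there the generic overlap may not produce $\mathcal{G}_4^{(2,2)}$, and I would supplement the computation with one or two explicit compositions built from the low‑index relations $\mathcal{R}_2^{(2,2,1)}$, $\mathcal{R}_4^{(\cdot,\cdot,\cdot)}$, $\mathcal{R}_1^{(2,\cdot,\cdot,\cdot)}$ and the relations in $G_1$.

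The hard part is combinatorial rather than conceptual. Every composition above carries a long conjunction of $\widehat{\delta}$‑type side conditions inherited from Definition~\ref{1} and Lemma~\ref{first}, and one must check both that these conditions are simultaneously realizable for all $r,i\in\Omega\setminus\{1\}$ and that, in each sub‑case where some free indices coincide, the reductions are carried out consistently (confluence being guaranteed by Theorem~\ref{di}) — keeping track of exactly which degree‑three words survive so that precisely the four monomials $e_{r,i}e_{i,r}$, $e_{r,1}e_{1,r}$, $e^2_{1,1}$, $e_{1,i}e_{i,1}$ remain. Isolating the correct handful of extra compositions needed when $n=2$ is the other place where I expect to have to be careful.
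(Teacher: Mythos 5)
Your general methodology (form compositions, reduce modulo the current set, then self-reduce the resulting family) is the right one, and your opening observation that $e_{r,i}e_{i,r}$ is a normal word modulo $G\cup G_1$ is correct. But there is a genuine gap in where you look for the composition. You assert that ``the compositions of two elements of $G$ were already disposed of in Lemma~\ref{first}'' and accordingly restrict the search to overlaps involving at least one of $\mathcal{G}_1,\mathcal{G}_2,\mathcal{G}_3$. That premise is false: Lemma~\ref{first} only claims that the set of normal forms of compositions of elements of $G$ \emph{includes} $G_1$; it does not exhaust them. The paper obtains $\mathcal{G}_4^{(r,i)}$ precisely from a composition of two elements of $G$, namely $S=\mathcal{R}_1^{(r,i,s,t)}e_{t,m}-e_{r,i}\mathcal{R}_1^{(i,s,t,m)}$, whose overlap word $e_{r,i}e_{i,s}e_{s,t}e_{t,m}$ has degree $4$. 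Reducing $S$ modulo $G$ (via $\mathcal{R}_3$ and $\mathcal{R}_5$) gives $\delta_{m,r}e_{s,t}e_{t,s}-e_{r,t}e_{t,m}-\delta_{m,r}e_{s,i}e_{i,s}+e_{r,i}e_{i,m}$, which modulo $G_1$ vanishes for $m\neq r$ and for $m=r$ yields $e_{r,i}e_{i,r}-e_{r,t}e_{t,r}-e_{s,i}e_{i,s}+e_{s,t}e_{t,s}$; self-reducing this family against its member with $(s,t)=(1,1)$ gives exactly $\mathcal{G}_4^{(r,i)}$. So the only new ingredient relative to Lemma~\ref{first} is that the \emph{reduction} is performed modulo the enlarged set $G\cup G_1$, not that the \emph{overlap} involves an element of $G_1$; your restriction excludes the composition that actually works.

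Moreover, the specific overlap you propose demonstrably produces nothing. The degree-$3$ trace word $e_{r,i}e_{i,k}e_{k,r}$ can only arise as an overlap of two quadratic relations, and the resulting composition, e.g.\ $\mathcal{G}_1^{(r,i,k)}e_{k,r}-e_{r,i}\mathcal{G}_1^{(i,k,r)}=-e_{r,1}e_{1,k}e_{k,r}+e_{r,i}e_{i,1}e_{1,r}$, reduces modulo $G_1$ (for $k\neq 1$, using $\mathcal{G}_1^{(1,k,r)}$ and $\mathcal{G}_1^{(r,i,1)}$) to $-e_{r,1}e_{1,1}e_{1,r}+e_{r,1}e_{1,1}e_{1,r}=0$. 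Two smaller inaccuracies: the surviving terms in the correct computation collapse through $\mathcal{R}_3$- and $\mathcal{R}_5$-type rules rather than an $\mathcal{R}_1$-type rule on $e_{a,1}e_{1,1}e_{1,b}$; and no separate treatment of $n=2$ is needed for this lemma, since for $r=i=2$ the admissible choice $(s,t)=(1,1)$, $m=2$ already yields $\mathcal{G}_4^{(2,2)}$.
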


\begin{proof}
For all $(s, t) < (r,i)$, we consider the following composition:
\[
S = \mathcal{R}_1^{(r,i,s,t)}e_{t,m} - e_{r,i} \mathcal{R}_1^{(i,s, t,m)}.
\]
We eliminate from $S$ all occurrences of the leading monomials of elements of $G$:
\begin{dmath*}
S
= - e_{s,t} e_{i,s} e_{r,i}e_{t,m} - e_{r,t} e_{t,m} + e_{r,i} e_{t,m} e_{s,t} e_{i,s}+ e_{r,i}e_{i,m}
\equiv
 -e_{s,t} \left( e_{t,m} e_{r,i} e_{i,s} - \delta_{m,r} e_{t,s}\right)- e_{r,t} e_{t,m}+
 \left(e_{s,t} e_{t,m} e_{r,i} - \delta_{m,r} e_{s,i}\right)e_{i,s}+ e_{r,i}e_{i,m}
=
\delta_{m,r} e_{s,t} e_{t,s}  - e_{r,t} e_{t,m}- \delta_{m,r} e_{s,i}e_{i,s} +e_{r,i}e_{i,m},
\end{dmath*}
using the relations $\mathcal{R}_3^{(i,s,r,t)}$, $\mathcal{R}_5^{(i,s,r,i,t,m)}$, $\mathcal{R}_3^{(r,i,t,s)}$ and $\mathcal{R}_5^{(r,i,t,m,s,t)}$.
We now eliminate from $S$ all occurrences of the leading monomials of elements of $G_1$.
Clearly, if $m \neq r$ then $S\equiv 0\!\!\mod{G_1}$, using the relations $\mathcal{G}^{(r,t,m)}_1$(if $t\neq $1) and $\mathcal{G}^{(r,i,m)}_1$.
Assume $m = r$ and obtain the set $\mathcal{N}$ of nonzero normal forms of $S$ modulo $G\cup G_1$:
  \[\mathcal{N} = \big\{ \mathcal{N}^{(r,i,t,s)} =  e_{r,i}e_{i,r} - e_{r,t} e_{t,r} - e_{s,i} e_{i,s} +e_{s,t} e_{t,s}\mid \text{for all} \,  (s,t)<(r,i) \big\}.\]
We observe that the set $\mathcal{N}$ is not self-reduced and the element $\mathcal{N}^{(r,i,1,1)}$ coincides with $\mathcal{G}^{(r,i)}_4$ for all $r,i \neq 1$.
Assume now that $s, t\neq 1 $. For all $(s,t)<(r,i)$, we eliminate from $\mathcal{N}^{(r,i,t,s)}$ occurrence of the leading monomials of
$\mathcal{N}^{(r,t,1,1)}$, $\mathcal{N}^{(s,i,1,1)}$ and $\mathcal{N}^{(s,t,1,1)}$ and again obtain  $\mathcal{G}^{(r,i)}_4$.
A similar argument can be used if $s\neq 1$ or $t\neq 1$. The result is a self-reduced set consisting of the elements $\mathcal{G}^{(r,i)}_4$.
\end{proof}
\allowdisplaybreaks
\begin{lemma}\label{GG3}
The set of all normal forms modulo $G \cup G_1 \cup G_2$ of nontrivial compositions among elements of $G \cup G_1 \cup G_2$ includes the set $G_3$ which consists of the elements:
\begin{align*}
\mathcal{G}^{(r,i)}_5 &= e_{r,1} e_{1,i} e_{i,1} - e_{1,1}^2 e_{r,1} - e_{r,1}\quad (r < i;\, i, r \in \Omega \setminus \{1\}),
\\
\mathcal{G}^{(i,r)}_{6} &= e_{i,1} e_{1,i} e_{r,1} - e_{1,1}^2 e_{r,1} \quad ( i<r;\,    i,r \in \Omega \setminus \{1\} ),
\\
\mathcal{G}^{(t,\ell)}_{7}  &= e_{1,t} e_{t,1} e_{1,\ell} - e_{1,1}^2 e_{1,\ell} \quad ( t<\ell;\, t,\ell \in  \Omega \setminus \{1\} ),
\\
\mathcal{G}^{(\ell,t)}_{8}  &= e_{1,\ell} e_{t,1} e_{1,t}-e_{1,1}^2 e_{1,\ell} + e_{1,\ell} \quad (\ell < t; \ell,t \in \Omega \setminus \{1\}),
\\
\mathcal{G}^{(r)}_9  &=  e_{r,1} e_{1,r} e_{r,1} -  2e_{1,1}^2 e_{r,1} - e_{r,1}\quad (r \in \Omega \setminus \{1\}),
\\
\mathcal{G}^{(r)}_{10}  &=  e_{1,r} e_{r,1} e_{1,r} -  2e_{1,1}^2 e_{1,r} + e_{1,r}\quad (r \in \Omega \setminus \{1\}),
\\
\mathcal{G}^{(r,i,\ell)}_{11}  &= e_{r,1} e_{1,i} e_{\ell,1}\quad (r\neq  i\neq \ell),
\\
\mathcal{G}^{(\ell,i,r)}_{12}  &= e_{1,\ell} e_{i,1} e_{1,r}\quad (r\neq  i\neq \ell),
\\
\mathcal{G}^{(i)}_{13} &= e_{1,1} e_{1,i}e_{i,1}   - \tfrac{1}{2} e_{1,1}^3 -\tfrac{1}{2}  e_{1,1}\quad (i \in \Omega \setminus \{1\}),
\\
\mathcal{G}^{(i)}_{14}  &= e_{1,1} e_{i,1} e_{1,i} - \tfrac{1}{2}e_{1,1}^3 +\tfrac{1}{2} e_{1,1}\quad (i \in \Omega \setminus \{1\}).
\end{align*}
\end{lemma}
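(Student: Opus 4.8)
The plan is to proceed exactly as in the proofs of Lemmas~\ref{first} and~\ref{second}: enumerate the relevant compositions of pairs of elements of $G\cup G_1\cup G_2$, reduce each one to a normal form modulo $G\cup G_1\cup G_2$ by repeatedly deleting leading monomials, and collect the resulting monic normal forms. Since the lemma only asserts that $G_3$ is \emph{contained} in the set of all such normal forms, it is enough to exhibit, for each of $\mathcal{G}_5,\dots,\mathcal{G}_{14}$, at least one composition whose reduced normal form is a nonzero scalar multiple of it, together with the self-reduction steps that bring the accompanying index-parametrized families into the shapes displayed. Completeness of the eventual Gr\"obner basis (via Theorem~\ref{di}) is not needed here; this lemma is just one round of the Buchberger-style completion.

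First I would organize the compositions by the degrees of the two leading monomials involved. The leading monomials of $\mathcal{G}_1,\mathcal{G}_2,\mathcal{G}_3,\mathcal{G}_4$ all have degree $2$, so an overlap of two of them is a composition of degree $3$; reducing such a composition modulo $G\cup G_1\cup G_2$ — using the relations of $G_1\cup G_2$ to rewrite degree-$2$ products (they either vanish, by $\mathcal{G}_3$, or are rerouted through the index $1$, by $\mathcal{G}_1,\mathcal{G}_2,\mathcal{G}_4$), and using $\mathcal{R}_1,\dots,\mathcal{R}_6$ wherever a degree-$3$ leading monomial of $G$ appears — leaves a degree-$3$ polynomial supported on monomials of the forms $e_{\ast,1}e_{1,\ast}e_{\ast,1}$, $e_{1,\ast}e_{\ast,1}e_{1,\ast}$, powers of $e_{1,1}$, and single generators $e_{\ast,1},e_{1,\ast}$. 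These are precisely the supports occurring in $\mathcal{G}_5,\dots,\mathcal{G}_{14}$, and matching index patterns identifies which composition yields which generator. The leading monomials of $G$ have degree $3$, so an overlap of an element of $G$ with an element of $G_1\cup G_2$, or a not-yet-treated overlap of two elements of $G$, is a composition of degree $4$; after deleting leading monomials it collapses to degree $\le 3$ and supplies the remaining generators. As in the computation $S_3\equiv -2\,e_{1,2}^2$ of Lemma~\ref{first}, some of these reductions return an already-known degree-$3$ element scaled by $2$; dividing by $2$ to make the normal form monic is what introduces the coefficients $\tfrac12$ in $\mathcal{G}_{13}$ and $\mathcal{G}_{14}$.

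As before, the raw normal forms arrive in families indexed by the auxiliary indices created by the overlap words, and none of these families is self-reduced. The next step is the self-reduction: from each member one subtracts a suitable left or right multiple of the member whose auxiliary index equals $1$, which both forces the index $1$ into the final generators and merges the many compositions down to the ten families $\mathcal{G}_5,\dots,\mathcal{G}_{14}$. Finally, for small $n$ (in particular $n=2$) a few of these families — those whose index constraints, such as $r<i$ with $r,i\in\Omega\setminus\{1\}$, cannot be met by the generic overlaps — are not produced this way; for them I would add a short list of ad hoc compositions of low-degree generators and reduce them directly, exactly as the auxiliary compositions $S_3,S_4,S_5$ were handled in Lemma~\ref{first}.

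I expect the only real difficulty to be the volume of bookkeeping: there are ten generator families to produce, each from its own composition, and each reduction branches on a cascade of Kronecker-delta coincidences among the indices $i,j,k,\ell,r,s,t$ — deciding at every step which of $\mathcal{R}_1,\dots,\mathcal{R}_6,\mathcal{G}_1,\dots,\mathcal{G}_4$ applies, and checking that every branch either vanishes modulo $G\cup G_1\cup G_2$ or lands in the span of $G_3$. Beyond Theorem~\ref{di} and the reduction procedure already used twice above, there is no new conceptual ingredient.
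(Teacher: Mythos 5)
Your outline correctly identifies the method the paper itself uses --- overlap the degree-$2$ leading monomials of $\mathcal{G}_1,\dots,\mathcal{G}_4$ to form degree-$3$ compositions, reduce modulo $G\cup G_1\cup G_2$ by branching on Kronecker-delta coincidences, and self-reduce the resulting families against their index-$1$ members --- but as written it is a plan rather than a proof. The entire content of the lemma is the explicit list of ten polynomials with their precise signs, coefficients, and index constraints (e.g.\ that $\mathcal{G}_5^{(r,i)}$ carries a $-e_{r,1}$ term while $\mathcal{G}_6^{(i,r)}$ does not, or that $\mathcal{G}_9^{(r)}$ has coefficient $-2$ on $e_{1,1}^2e_{r,1}$), and you never name a single concrete composition nor carry out a single reduction, so none of these is actually verified. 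The paper does this with exactly six compositions, all among elements of $G_1\cup G_2$: $S_1=\mathcal{G}_1^{(r,t,i)}e_{i,\ell}-e_{r,t}\mathcal{G}_1^{(t,i,\ell)}$ yields $\mathcal{G}_5,\dots,\mathcal{G}_8$ and the auxiliary family $\mathcal{G}'$; $S_2,S_3$ (overlaps of $\mathcal{G}_4$ with $\mathcal{G}_1,\mathcal{G}_2$) yield $\mathcal{G}_9,\mathcal{G}_{10}$; $S_4,S_5$ (overlaps with $\mathcal{G}_3$) yield $\mathcal{G}_{11},\mathcal{G}_{12}$; and $S_6$ yields the auxiliary family $\mathcal{G}''$, after which self-reducing $\mathcal{G}'^{(i,i)}$ against $\mathcal{G}''^{(i,i)}$ produces the coefficient $-2$ whose monic form gives $\mathcal{G}_{13}$ and then $\mathcal{G}_{14}$. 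Exhibiting these (or equivalent) compositions and their reductions is the irreducible core of the argument, not dispensable bookkeeping.

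Two smaller points in your plan are also off. First, you claim that degree-$4$ compositions (overlaps involving the degree-$3$ leading monomials of $G$) are needed to ``supply the remaining generators''; in fact all ten families arise from degree-$3$ overlaps among $G_1\cup G_2$, with the relations of $G$ used only as rewriting rules during reduction. Second, your proposed $n=2$ patch is unnecessary for this lemma: the families whose constraints ($r<i$ with $r,i\in\Omega\setminus\{1\}$, etc.) cannot be met at $n=2$ are then simply empty, so there is nothing to produce --- unlike the situation in Lemma~\ref{first}, where the missing elements of $\mathcal{G}_3$ at $n=2$ were nonvacuous and genuinely required the ad hoc compositions $S_3,S_4,S_5$.
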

\begin{proof}
For all $r,t,i,\ell ,k\in \Omega$, we consider the following six compositions:
  \begin{align*}
    S_1 &= \mathcal{G}^{(r,t,i)}_{1}e_{i,\ell} - e_{r,t} \mathcal{G}^{(t,i,\ell)}_{1}\quad (1\neq i \neq r,\,   \ell \neq t \neq 1),
    \\
    S_2& = \mathcal{G}^{(r,i)}_4 e_{r,t} - e_{r,i} \mathcal{G}^{(i,r,t)}_1 \quad (i,r \neq 1,\,i\neq t),
    \\
    S_3& = \mathcal{G}^{(i,r)}_4 e_{t,r} - e_{i,r} \mathcal{G}^{(r,i,t)}_2 \quad (i,r \neq 1,\,i\neq t),
    \\
    S_4&= \mathcal{G}^{(r,t,i)}_{1} e_{\ell,k} - e_{r,t} \mathcal{G}_{3}^{(t,i,\ell,k)}\quad (r\neq i\neq \ell, k\neq t\neq 1),
    \\
    S_{5}& = \mathcal{G}^{(t,\ell,i)}_{2} e_{k,r}- e_{t,\ell} \mathcal{G}_{3}^{(i,t,k,r)}\quad (\ell\neq i\neq r, k\neq t\neq 1),
        \\
    S_6 &= \mathcal{G}_4^{(r,i)} e_{t,\ell} - e_{r,i} \mathcal{G}_3^{(i,r,t,\ell)}\quad ( i,r \neq 1 , \ell \neq i, r\neq t).
    \end{align*}
We eliminate from these compositions all occurrences of the leading monomials of $G \cup G_1 \cup G_2$. For the composition $S_1$, we have
\begin{align*}
S_1 &= - e_{r,1} e_{1,i} e_{i,\ell} + e_{r,t} e_{t,1} e_{1,\ell}
\\
&\equiv - \delta_{\ell,1} e_{r,1} e_{1,i} e_{i,1} - \widehat{\delta}_{\ell,1} e_{r,1} e_{1,1} e_{1,\ell}+ \delta_{r,1} e_{1,t} e_{t,1} e_{1,\ell} +  \widehat{\delta}_{r,1} e_{r,1} e_{1,1} e_{1,\ell}\!\!\!\!\mod{G_1},
\end{align*}
using the relations $\mathcal{G}^{(1,i,\ell)}_{1}$ and $\mathcal{G}^{(r,t,1)}_{1}$.
We note first that if $\ell, r \neq 1$ then $S \equiv 0 \!\!\mod{G_1}$. Three cases need to be considered.
Case I. If $(\ell,  r) = (1, 1)$ then
\begin{align*}
S_1& \equiv - e_{1,1} e_{1,i} e_{i,1} + e_{1,t} e_{t,1} e_{1,1}\!\!\!\!\mod{G_1}
\\& \equiv - e_{1,1} e_{1,i} e_{i,1} + e_{1,1} e_{t,1} e_{1,t} + e_{1,1}  \!\!\!\!\mod{G},
\end{align*}
using the relation $\mathcal{R}^{(1,t,1)}_{2}$, since by definition $ t\neq 1$. Hence the (monic) normal form of $S_1$ in this case is
\begin{equation}\label{pr1} \mathcal{G}{^\prime}^{(t,i)}  =  e_{1,1} e_{t,1} e_{1,t} - e_{1,1} e_{1,i} e_{i,1}  + e_{1,1}\quad (t,i \in \Omega \setminus \{1\}).\end{equation}
Case II. If $\ell =1$ and $r\neq 1$ then
\begin{align*}
S_1 &\equiv - e_{r,1} e_{1,i} e_{i,1} + e_{r,1} e_{1,1}^2\!\!\!\!\mod{G_1}
 \\&\equiv - e_{r,1} e_{1,i} e_{i,1} + e_{1,1}^2 e_{r,1} + e_{r,1}\!\!\!\!\mod{G},
\end{align*}
using the relation $\mathcal{R}^{(r,1,1,1)}_{1}$. Clearly, if $r< i$ then the monic form of the last equation coincides with $\mathcal{G}^{(r,i)}_5$.
If $i<r$ then the  element $e_{r,1} e_{1,i} e_{i,1}$ of the last equation can be reduced further modulo $G$:
$ e_{r,1} e_{1,i} e_{i,1}\equiv e_{i,1} e_{1,i} e_{r,1}+e_{r,1} \!\!\mod{G}$.
Using this in the last equation gives $\mathcal{G}^{(i,r)}_{6}$.  Case III. If $\ell \neq 1$ and $r =1$ then
\begin{align*}
S_1 &\equiv - e_{1,1}^2 e_{1,\ell} + e_{1,t} e_{t,1} e_{1,\ell}\!\!\!\!\mod{G_1}.
\end{align*}
Clearly, if $t<\ell$ then the normal form of $S_1$ in this case coincides with $\mathcal{G}^{(t,\ell)}_{7}$.
If $ \ell <t$ then the element $e_{1,t} e_{t,1} e_{1,\ell}$ of the last equation can be reduced further modulo $G$:
$e_{1,t} e_{t,1} e_{1,\ell} \equiv e_{1,\ell} e_{t,1} e_{1,t}+e_{1,\ell} \!\!\mod{G}$. Using this in the last equation gives $\mathcal{G}_{8}^{(\ell,t)}$.

For the composition $S_2$, we have
  \begin{dmath*}
 S_2 = - e_{r,1} e_{1,r} e_{r,t} + e_{1,1}^2 e_{r,t} - e_{1,i} e_{i,1} e_{r,t} + e_{r,i}e_{i,1}e_{1,t}
  \equiv - \delta_{t,1} e_{r,1} e_{1,r} e_{r,1} - \widehat{\delta}_{t,1} e_{r,1} e_{1,1} e_{1,t} + \delta_{t,1} e_{1,1}^2 e_{r,1} + e_{r,1} e_{1,1} e_{1,t}\!\!\!\!\mod{G_1}
 \equiv - \delta_{t,1} e_{r,1} e_{1,r} e_{r,1}  + \delta_{t,1} \left(2e_{1,1}^2 e_{r,1}+e_{r,1}\right)\!\!\!\!\mod{G},
  \end{dmath*}
using the relations $\mathcal{G}^{(1,r,t)}_{1}$, $\mathcal{G}^{(1,1,r,t)}_{3}$, $\mathcal{G}^{(i,1,r,t)}_{3}$, $\mathcal{G}^{(r,i,1)}_{1}$ and $\mathcal{R}^{(r,1,1,t)}_{1}$.
Hence, for $t=1$ the (monic) normal form of $S_2$ coincides with $\mathcal{G}^{(r)}_9$.
 For the composition $S_3$, we have
 \begin{dmath*}
S_3 = -e_{i,1} e_{1,i} e_{t,r} + e_{1,1}^2 e_{t,r} - e_{1,r} e_{r,1} e_{t,r} + e_{i,r}e_{1,i} e_{t,1}
\equiv \delta_{t,1} e_{1,1}^2 e_{1,r} - \widehat{\delta}_{t,1} e_{1,r} e_{1,1}e_{t,1}- \delta_{t,1} e_{1,r} e_{r,1}e_{1,r} + e_{1,r}e_{1,1} e_{t,1}\!\!\!\!\mod{G_1}
\equiv \delta_{t,1} \left(e_{1,1}^2 e_{1,r} - e_{1,r} e_{r,1}e_{1,r} + e_{1,1}^2 e_{1,r} -e_{1,r} \right)\!\!\!\!\mod{G},
  \end{dmath*}
using the relations $\mathcal{G}^{(1,i,t,r)}_{3}$, $\mathcal{G}^{(1,1,t,r)}_{3}$, $\mathcal{G}^{(i,r,1)}_{2}$,  $\mathcal{G}^{(r,1,t)}_{2}$ and $\mathcal{R}^{(1,r,1)}_{4}$.
Hence, for $t=1$ the (monic) normal form of $S_3$ coincides with $\mathcal{G}^{(r)}_{10}$. Next, we consider the composition $S_4$:
 \[S_4 = -e_{r,1} e_{1,i} e_{\ell,k} \equiv - \delta_{k,1} e_{r,1} e_{1,i} e_{\ell,1}\!\!\!\!\mod{G_1},\]
 using the relation $\mathcal{G}^{(1,i,\ell,k)}_{3}$.
Obviously, for $k = 1$ the (monic) normal form of $S_4$ coincides with $\mathcal{G}^{(r,i,
\ell)}_{11}$. Similarly, we can show that for $k =1$, the (monic) non zero normal form of $S_5$ coincides with   $\mathcal{G}^{(\ell,i,r)}_{12}.$ Finally, for the composition $S_6$, we have
 \begin{align*}
 S_6 &= - e_{r,1} e_{1,r} e_{t,\ell} + e_{1,1}^2 e_{t,\ell} - e_{1,i} e_{i,1} e_{t,\ell}
 \\
 &\equiv  - \delta_{\ell,1} e_{r,1} e_{1,r} e_{t,1} + \delta_{t,1} e_{1,1}^2 e_{1,\ell} + \widehat{\delta}_{t,1} \delta_{\ell,1} e_{1,1}^2 e_{t,1} - \delta_{t,1} e_{1,i} e_{i,1} e_{1,\ell} \!\!\!\!\mod{G_1},
 \end{align*}
 using the relations $\mathcal{G}^{(1,r,t,\ell)}_{3}$, $\mathcal{G}^{(1,1,t,\ell)}_{3}$ and $\mathcal{G}^{(i,1,t,\ell)}_{3}$.
  Clearly, if $ \ell = 1$ and $ t\neq 1$ then the (monic) normal form of $S_6$ coincides with $\mathcal{G}^{(r,t)}_6$ (if $r<t$) and $\mathcal{G}^{(t,r)}_5$ (if $t<r$). If $\ell \neq 1$ and $t = 1$ then the (monic) normal form of $S_6$ coincides with $\mathcal{G}^{(i,\ell)}_7$ (if $i<\ell$) and  $\mathcal{G}^{(\ell,i)}_8$ (if $\ell <i$). If $(\ell ,t)= (1, 1)$, since by definition $i, r\neq 1$, we have
 \begin{dmath*}
S_6  \equiv  - e_{r,1} e_{1,r} e_{1,1} + e_{1,1}^3  - e_{1,i} e_{i,1} e_{1,1} \!\!\!\!\mod{G_1}
  \equiv -\left(e_{1,1} e_{1,r} e_{r,1}-e_{1,1} \right) + e_{1,1}^3 - \left(e_{1,1} e_{i,1} e_{1,i} +e_{1,1}\right)\!\!\!\!\mod{G},
\end{dmath*}
using the relations $\mathcal{R}^{(r,1,1,1)}_{3}$ and $\mathcal{R}^{(1,i,1)}_{2}$.
Hence, the (monic) normal form of $S_6$ in this case is
 \begin{equation}\mathcal{G}^{{\prime\prime}^{(i,r)} }=   e_{1,1} e_{i,1} e_{1,i}+ e_{1,1} e_{1,r} e_{r,1}- e_{1,1}^3 \quad (i,r \in \Omega \setminus \{1\}).\label{pr}\end{equation}
 We note that the set $\mathcal{N} = \big\{ \, \mathcal{G}{^\prime}^{(t,i)},\, \mathcal{G}^{{\prime\prime}^{(i,r)} }|\,\, \text{for all}\,\, i, t,r \in \Omega \setminus \{1\} \, \big\}$ of the normal forms \eqref{pr1} and \eqref{pr} is not self-reduced. So, we eliminate from
$\mathcal{G}{^\prime}^{(i,i)}$ the leading monomial of  $\mathcal{G}^{{\prime\prime}^{(i,i)} }$ and obtain
\[\mathcal{G}{^\prime}^{(i,i)}= -2 e_{1,1}e_{1,i} e_{i,1} +e_{1,1}^3 +e_{1,1},\]
whose monic form coincides with $\mathcal{G}^{(i)}_{13}$.
We now eliminate from $\mathcal{G}^{{\prime\prime}^{(i,i)} }$ the leading monomial of $\mathcal{G}^{(i)}_{13}$ and obtain
\[ \mathcal{G}^{{\prime\prime}^{(i,i)} } = e_{1,1} e_{i,1} e_{1,i} + \tfrac{1}{2} e_{1,1}^3 + \tfrac{1}{2} e_{1,1} -e_{1,1}^3,\]
which coincides with $\mathcal{G}^{(i)}_{14}$. 
\end{proof}
\begin{lemma}\label{seventh}
The set of all normal forms modulo $G \cup G_1 \cup G_2 \cup G_3$ of nontrivial compositions among elements of $G \cup G_1 \cup G_2 \cup G_3$ includes the set $G_4$ which consists of the elements:
\begin{align*}
&\mathcal{G}^{(i)}_{17} = e_{1,1}^3 e_{1,i} - e_{1,1} e_{1,i},\quad
 \mathcal{G}^{(i)}_{18}=  e_{1,1}^3 e_{i,1} + e_{1,1} e_{i,1} \qquad (i\in \Omega \setminus \{1\}),
 \\
 &\mathcal{G}_{19}= e_{1,1}^5 - e_{1,1}.
\end{align*}
\end{lemma}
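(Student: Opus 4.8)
The plan is to imitate the proofs of Lemmas \ref{first}, \ref{second} and \ref{GG3}: run through the nontrivial compositions among elements of $G\cup G_1\cup G_2\cup G_3$, compute their normal forms modulo $G\cup G_1\cup G_2\cup G_3$, and self-reduce the set of normal forms obtained. Since each new generator of $G_4$ is a polynomial in $e_{1,1}$ multiplied by a single matrix unit (or a pure power of $e_{1,1}$), the compositions that matter are those built from the cubic relations $\mathcal{G}_5,\dots,\mathcal{G}_{14}$ of $G_3$ — above all $\mathcal{G}_9$, $\mathcal{G}_{10}$, $\mathcal{G}_{13}$, $\mathcal{G}_{14}$ — together with the relations $\mathcal{R}_4^{(1,i,1)}$, $\mathcal{R}_1^{(i,1,1,1)}$ and their analogues having repeated index $1$. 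These are precisely the relations whose leading monomials can overlap to form words that, after reduction, are powers of $e_{1,1}$ times $e_{1,i}$ or $e_{i,1}$, or the power $e_{1,1}^5$.

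First I would produce $\mathcal{G}_{17}^{(i)}$ and $\mathcal{G}_{18}^{(i)}$ from degree-four compositions. Since $\mathrm{LM}(\mathcal{G}_{13}^{(i)})=e_{1,1}e_{1,i}e_{i,1}$ and $\mathrm{LM}(\mathcal{G}_{10}^{(i)})=e_{1,i}e_{i,1}e_{1,i}$ overlap in the word $e_{1,i}e_{i,1}$, the element $\mathcal{G}_{13}^{(i)}e_{1,i}-e_{1,1}\mathcal{G}_{10}^{(i)}$ is a composition; expanding it, the cubic monomial $e_{1,1}e_{1,i}e_{i,1}e_{1,i}$ cancels and what is left is already in normal form modulo $G\cup G_1\cup G_2\cup G_3$ and equals a nonzero multiple of $e_{1,1}^3e_{1,i}-e_{1,1}e_{1,i}$, whose monic form is $\mathcal{G}_{17}^{(i)}$. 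The mirror composition $\mathcal{G}_{14}^{(i)}e_{i,1}-e_{1,1}\mathcal{G}_9^{(i)}$, built from the overlap $e_{i,1}e_{1,i}$ between $\mathrm{LM}(\mathcal{G}_{14}^{(i)})$ and $\mathrm{LM}(\mathcal{G}_9^{(i)})$, yields in the same way a nonzero multiple of $\mathcal{G}_{18}^{(i)}$; the opposite signs of the linear terms of $\mathcal{G}_{14}^{(i)}$ and $\mathcal{G}_9^{(i)}$ explain the sign difference between $\mathcal{G}_{17}^{(i)}$ and $\mathcal{G}_{18}^{(i)}$. Both compositions are available for every $n\ge 2$.

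Next I would produce $\mathcal{G}_{19}$ from a degree-five composition. The leading monomials $\mathrm{LM}(\mathcal{G}_{13}^{(i)})=e_{1,1}e_{1,i}e_{i,1}$ and $\mathrm{LM}(\mathcal{G}_9^{(i)})=e_{i,1}e_{1,i}e_{i,1}$ overlap in the single letter $e_{i,1}$, so $\mathcal{G}_{13}^{(i)}\,e_{1,i}e_{i,1}-e_{1,1}e_{1,i}\,\mathcal{G}_9^{(i)}$ is a composition. To reduce it I would rewrite the factor $e_{1,i}e_{1,1}^2$ by $\mathcal{R}_4^{(1,i,1)}$ and then repeatedly eliminate the recurring factor $e_{1,1}e_{1,i}e_{i,1}$ using $\mathcal{G}_{13}^{(i)}$; after these substitutions the $e_{1,1}^3$-terms cancel and a nonzero scalar multiple of $e_{1,1}^5-e_{1,1}$ survives, i.e.\ $\mathcal{G}_{19}$. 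This composition is also available for every $n\ge 2$.

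Finally I would verify that no further generators arise. Every remaining composition among $G\cup G_1\cup G_2\cup G_3$ either reduces to zero modulo $G\cup G_1\cup G_2\cup G_3$ — many of the overlaps involving $\mathcal{R}_5$, $\mathcal{R}_6$, $\mathcal{G}_{11}$ and $\mathcal{G}_{12}$ collapse immediately because $\mathcal{G}_{11}$ and $\mathcal{G}_{12}$ are monomials — or has a normal form whose leading monomial is divisible by $e_{1,1}^3e_{1,i}$, $e_{1,1}^3e_{i,1}$ or $e_{1,1}^5$; examples of the latter are the degree-four elements $e_{1,1}^4e_{i,1}+e_{1,1}^2e_{i,1}$ and $e_{1,1}^4e_{1,i}-e_{1,1}^2e_{1,i}$ coming from overlaps of $\mathcal{G}_5$, $\mathcal{G}_6$ with $\mathcal{G}_9$, and analogous degree-five elements. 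Such a normal form is not self-reduced, and reducing its leading monomial against $\mathcal{G}_{17}^{(i)}$, $\mathcal{G}_{18}^{(i)}$ or $\mathcal{G}_{19}$ sends it to zero or to a multiple of an element already found. Carrying out the self-reduction of the whole set therefore leaves exactly $G_4=\{\mathcal{G}_{17},\mathcal{G}_{18},\mathcal{G}_{19}\}$. The case $n=2$ is handled, as in Lemma \ref{first}, by inspecting the finitely many extra compositions available when $\Omega=\{1,2\}$ and checking that each reduces to zero or to a multiple of an element of $G_4$. The main obstacle is the sheer bookkeeping: the cubic relations $\mathcal{G}_5,\dots,\mathcal{G}_{14}$ overlap with one another and with $\mathcal{R}_1,\dots,\mathcal{R}_4$ in a great many ways, and one must confirm that every resulting normal form either vanishes or self-reduces onto $\{\mathcal{G}_{17},\mathcal{G}_{18},\mathcal{G}_{19}\}$.
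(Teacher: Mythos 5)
Your proposal is correct and follows the paper's method for this lemma: exhibit explicit compositions among the cubic relations and reduce them to normal form. For $\mathcal{G}_{19}$ you use exactly the paper's composition $\mathcal{G}_{13}^{(i)}e_{1,i}e_{i,1}-e_{1,1}e_{1,i}\mathcal{G}_{9}^{(i)}$; your reduction path (rewriting $e_{1,i}e_{1,1}^2$ by $\mathcal{R}_4^{(1,i,1)}$) differs from the paper's, which annihilates the term $e_{1,1}e_{1,i}e_{1,1}^2e_{i,1}$ outright via the monomial relation $\mathcal{G}_{11}^{(1,i,1)}$; the two paths give $\tfrac{3}{4}\left(e_{1,1}^5-e_{1,1}\right)$ and $-\tfrac{1}{4}\left(e_{1,1}^5-e_{1,1}\right)$ respectively, the same monic form, which is all that matters here. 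For $\mathcal{G}_{17}^{(i)}$ and $\mathcal{G}_{18}^{(i)}$ you overlap $\mathcal{G}_{13}^{(i)}$ with $\mathcal{G}_{10}^{(i)}$ and $\mathcal{G}_{14}^{(i)}$ with $\mathcal{G}_{9}^{(i)}$, whereas the paper overlaps $\mathcal{G}_{14}^{(i)}$ with the quadratic monomial $\mathcal{G}_{3}^{(1,i,1,i)}=e_{1,i}^2$ and $\mathcal{G}_{13}^{(i)}$ with $\mathcal{G}_{3}^{(i,1,i,1)}=e_{i,1}^2$; your compositions evaluate to $\tfrac{3}{2}\left(e_{1,1}^3e_{1,i}-e_{1,1}e_{1,i}\right)$ and $\tfrac{3}{2}\left(e_{1,1}^3e_{i,1}+e_{1,1}e_{i,1}\right)$, are already in normal form, and serve equally well. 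One caution: your closing paragraph, asserting that every remaining composition reduces to zero or self-reduces onto $G_4$, is neither required by the statement (which only claims the set of normal forms \emph{includes} $G_4$) nor substantiated by your sketch; the paper deliberately avoids that exhaustive verification and instead establishes completeness of $\mathfrak{G}$ in Theorem \ref{ba} via the representation-theoretic dimension bound of Lemma \ref{Gh}. You should either drop that paragraph or not present it as part of the proof of this lemma.
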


\begin{proof}
For all $i\in \Omega \setminus \{1\}$ we consider the following three compositions:
\begin{align*}
& S_1= \mathcal{G}^{(i)}_{14}e_{1,i}- e_{1,1} e_{i,1} \mathcal{G}^{(1,i,1,i)}_{3},\quad S_2 = \mathcal{G}^{(i)}_{13} e_{i,1}- e_{1,1}e_{1,i} \mathcal{G}^{(i,1,i,1)}_{3},
\\& S_3 =  \mathcal{G}_{13}^{(i)} e_{1,i} e_{i,1} -  e_{1,1} e_{1,i} \mathcal{G}_{9}^{(i)}.
\end{align*}
 We note that $S_1 = - \tfrac{1}{2} e_{1,1}^3 e_{1,i}+\tfrac{1}{2} e_{1,1}e_{1,i}$ and
 $S_2 = - \tfrac{1}{2} e_{1,1}^3e_{i,1} -\tfrac{1}{2} e_{1,1}e_{i,1}$ are in normal form modulo  $G \cup G_1 \cup G_2 \cup G_3$ and
the monic forms of $S_1$ and $S_2$ coincide with $\mathcal{G}^{(i)}_{17}$ and $\mathcal{G}^{(i)}_{18}$ respectively.
For the composition $S_3$, we have
\begin{align*}
 & S_3 =  -\tfrac{1}{2} e_{1,1}^3 e_{1,i} e_{i,1}-\tfrac{1}{2} e_{1,1} e_{1,i} e_{i,1}+2 e_{1,1} e_{1,i}e_{1,1}^2e_{i,1}+e_{1,1} e_{1,i} e_{i,1}
\\
& \equiv -\tfrac{1}{2} e_{1,1}^2\left( \tfrac{1}{2} e_{1,1}^3 + \tfrac{1}{2} e_{1,1}\right)+\tfrac{1}{2}\left(\tfrac{1}{2} e_{1,1}^3 + \tfrac{1}{2} e_{1,1}\right) \!\!\!\!\mod{G_3}
 = -\tfrac{1}{4} e_{1,1}^5 + \tfrac{1}{4} e_{1,1},
\end{align*}
 using the relations $\mathcal{G}^{(i)}_{13}$ and $\mathcal{G}^{(1,i,1)}_{11}$, whose monic form coincides with $\mathcal{G}_{19}$.
\end{proof}

\begin{lemma}\label{G*}
The self-reduced form $\mathfrak{G}$ of the set $G \cup G_1 \cup G_2 \cup G_3 \cup G_4$ consists of the elements:
\allowdisplaybreaks
\begin{align*}
\mathcal{G}_{0}^{(i,j)} &= e_{i,1} e_{1,1} e_{1,j} - e_{1,j} e_{1,1} e_{i,1} - e_{i,j}\quad ( i,j \in \Omega \setminus \{1\}),
\\
\mathcal{G}_{1}^{(i,j,k)} &= e_{i,j} e_{j,k} - e_{i,1} e_{1,k}  \quad  (i,j,k\in \Omega;\, k\neq i,  j \neq 1),
\\
 \mathcal{G}_{2}^{(i,j,k)} &= e_{i,j} e_{k,i} - e_{1,j} e_{k,1}\quad  (i,j,k \in \Omega;\, j\neq k,\, i \neq 1),
\\
 \mathcal{G}_{3}^{(i,j,k,\ell)}  &= e_{i,j} e_{k,\ell}\quad (i,j,k,\ell \in \Omega;\, i\neq \ell,\, j \neq k),
\\
  \mathcal{G}_{4}^{(i, j)}  &= e_{i, j} e_{j,i} - e_{i,1} e_{1,i}  - e_{1,j} e_{j,1}+ e^2_{1,1} \quad  (i, j\in \Omega \setminus \{1\}),
   \\
 \mathcal{G}_{5}^{(i,j)} &= e_{i,1} e_{1,j} e_{j,1} - e_{1,1}^2 e_{i,1} - e_{i,1}\quad ( i, j \in \Omega;\, i\neq 1, j\neq i),
 \\
  \mathcal{G}_{6}^{(i,j)} &= e_{j,1} e_{1,j} e_{i,1} - e_{1,1}^2 e_{i,1} \quad (i, j\in \Omega \setminus \{1\}; i\neq j),
 \\
  \mathcal{G}_{7}^{(i,j)} &= e_{1,i} e_{i,1} e_{1,j} - e_{1,1}^2 e_{1,j} \quad  (i, j \in \Omega \setminus \{1\};\, i \neq j),
    \\
 \mathcal{G}_{8}^{(i,j)} &= e_{1,i} e_{j,1} e_{1,j} - e_{1,1}^2 e_{1,i}+ e_{1,i}\quad (i,j\in \Omega;\, i\neq j, i\neq 1),
 \\
 \mathcal{G}_{9}^{(i)}  &= e_{i,1} e_{1,i}  e_{i,1}-2 e_{1,1}^2 e_{i,1} - e_{i,1}\quad (i \in \Omega \setminus \{1\}),
 \\
  \mathcal{G}_{10}^{(j)}  &=e_{1,j} e_{j,1} e_{1,j} - 2 e_{1,1}^2 e_{1,j} + e_{1,j} \quad  (j \in \Omega \setminus \{1\}) ,
 \\
 \mathcal{G}_{11}^{(i,j,k)}  &= e_{i,1} e_{1,j} e_{k,1}\quad ( k,i,j \in \Omega;\, k , i\neq j),
\\
\mathcal{G}_{12}^{(i,j,k)} &= e_{1,i} e_{j,1} e_{1,k} \quad  (k , i ,j \in \Omega;\,  i, k \neq j),
\\
\mathcal{G}_{13}^{(i)} &= e_{1,1} e_{1,i}e_{i,1}   - \tfrac{1}{2} e_{1,1}^3 -\tfrac{1}{2}  e_{1,1}\quad (i \in \Omega \setminus \{1\}),
\\
\mathcal{G}_{14}^{(i)}  &= e_{1,1} e_{i,1} e_{1,i} - \tfrac{1}{2}e_{1,1}^3 +\tfrac{1}{2} e_{1,1}\quad (i \in \Omega \setminus \{1\}),
\\
\mathcal{G}_{15}^{(i)}  &= e_{1,i} e_{i,1}e_{1,1} - \tfrac{1}{2}e_{1,1}^3 -\tfrac{1}{2}e_{1,1}\quad  (i \in \Omega \setminus \{1\}),
    \\
\mathcal{G}_{16}^{(i)}  &= e_{i,1} e_{1,i} e_{1,1}- \tfrac{1}{2}e_{1,1}^3 +\tfrac{1}{2} e_{1,1}\quad (i \in \Omega \setminus \{1\}),
 \\
\mathcal{G}_{17}^{(i)}  &= e_{1,1}^3 e_{1,i} - e_{1,1} e_{1,i} \quad (i \in \Omega \setminus \{1\}),
\\
 \mathcal{G}_{18}^{(i)} &=  e_{1,1}^3 e_{i,1} + e_{1,1} e_{i,1} \quad (i \in \Omega \setminus \{1\}),
\\
\mathcal{G}_{19} &= e_{1,1}^5 - e_{1,1}.
\end{align*}
\end{lemma}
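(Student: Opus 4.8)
The plan is to run the interreduction procedure on $H:=G\cup G_1\cup G_2\cup G_3\cup G_4$: repeatedly replace each element by its normal form modulo the rest, rescale leading coefficients to $1$, adjoin the nonzero normal forms that appear, delete zeros, and iterate until the set is stable. Since every element of $H$ is already monic, I would first record the leading monomials. For the quadratic generators $\mathcal{G}_1,\mathcal{G}_2,\mathcal{G}_3,\mathcal{G}_4$ the leading monomial is the monomial written first (for instance $\mathrm{LM}(\mathcal{G}_1^{(i,j,k)})=e_{i,j}e_{j,k}$, because $j\ne 1$ forces $e_{i,j}>e_{i,1}$), and for every cubic element --- an $\mathcal{R}_m$ or a $\mathcal{G}_m$ with $m\ge 5$ --- the leading monomial is again the first-written one, which follows from the deglex order together with the attached inequalities for the $\mathcal{R}_m$ (the $e_{1,1}$-blocks being the smallest letters).

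I would then discard the generators that are redundant in the Gr\"obner sense. For the commuting relations $\mathcal{R}_5,\mathcal{R}_6$, one checks that the side conditions in Definition~\ref{1} are precisely those forcing $\mathrm{LM}(\mathcal{R}_5)$, resp.\ $\mathrm{LM}(\mathcal{R}_6)$, to contain a two-letter factor equal to $\mathrm{LM}(\mathcal{G}_3^{(a,b,c,d)})$ for suitable indices; reducing that factor and iterating gives $\mathcal{R}_5\equiv\mathcal{R}_6\equiv 0$. Likewise, when the middle index of $\mathcal{R}_1$ is $\ne 1$ its leading monomial is a multiple of $\mathrm{LM}(\mathcal{G}_1)$, and when the relevant index of $\mathcal{R}_2,\mathcal{R}_3,\mathcal{R}_4$ is $\ne 1$ its leading monomial is a multiple of $\mathrm{LM}(\mathcal{G}_4)$ or $\mathrm{LM}(\mathcal{G}_2)$; the short reductions, of exactly the type performed in Lemmas~\ref{first}--\ref{seventh}, annihilate these relations. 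This produces nothing new, and it also shows that $G_1$, $G_2$ and $G_4$ pass through the procedure unchanged.

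The heart of the computation is the degenerate instances: the $\mathcal{R}_m$ whose repeated index is forced to $1$, and the cubic generators $\mathcal{G}_5,\dots,\mathcal{G}_{14}$ of $G_3$, whose leading monomials are words in $e_{1,1},e_{1,p},e_{p,1}$ and which reduce against one another rather than to zero. I would handle these by a finite case split on which indices equal $1$, tracking how the index ranges of $\mathcal{G}_5,\dots,\mathcal{G}_8$ enlarge. Representative cases: $\mathcal{R}_1^{(i,1,1,t)}$ is literally $\mathcal{G}_0^{(i,t)}$ for $t\ne 1$ and $\mathcal{G}_5^{(i,1)}$ for $t=1$; $\mathcal{R}_1^{(i,1,k,t)}$ with $k\ne 1$ reduces via $\mathcal{G}_1,\mathcal{G}_2$ (when $t\ne 1$) to $\mathcal{G}_0^{(i,t)}$ and via $\mathcal{G}_6$ (when $t=1$) to $\mathcal{G}_5^{(i,k)}$; $\mathcal{R}_2^{(1,j,t)}$ reduces via the $G_3$-element $\mathcal{G}_8^{(t,j)}$ to $\mathcal{G}_7^{(j,t)}$ when $t\ne 1$ and via $\mathcal{G}_{14}$ to the new element $\mathcal{G}_{15}^{(j)}$ when $t=1$; $\mathcal{R}_3^{(i,1,1,t)}$ reduces via $\mathcal{G}_5$ to $\mathcal{G}_6^{(t,i)}$ when $t\ne 1$ and via $\mathcal{G}_{13}$ to the new element $\mathcal{G}_{16}^{(i)}$ when $t=1$; and $\mathcal{R}_4^{(1,j,k)}$ equals $\mathcal{G}_8^{(j,1)}$ when $k=1$ and reduces via $\mathcal{G}_7$ to $\mathcal{G}_8^{(j,k)}$ when $k\ne 1$. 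Collecting all cases, the ranges of $\mathcal{G}_5,\mathcal{G}_6,\mathcal{G}_7,\mathcal{G}_8$ fill out to exactly those displayed, the genuinely new relations $\mathcal{G}_{15},\mathcal{G}_{16}$ arise, and $\mathcal{G}_9,\dots,\mathcal{G}_{14},\mathcal{G}_{17},\mathcal{G}_{18},\mathcal{G}_{19}$ remain. I would close by checking that the resulting set is self-reduced --- no listed leading monomial is a subword of a monomial of another listed element --- which is immediate once the leading monomials are written out.

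The main obstacle is organisational rather than conceptual: the number of index cases is large, and one must be careful that each reduction above uses only generators that are already present (so that a single pass of the algorithm suffices), that the successive range-extensions of $\mathcal{G}_5,\dots,\mathcal{G}_8$ are mutually consistent, and that a further pass changes nothing --- so that the output set is exactly $\mathfrak{G}$.
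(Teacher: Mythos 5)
Your proposal follows essentially the same route as the paper's proof: observe that the elements of $G_1\cup\dots\cup G_4$ are already in normal form modulo the rest, so only the original generators $\mathcal{R}_1,\dots,\mathcal{R}_6$ need to be interreduced, and then run a case split on which indices equal $1$. Your specific reductions agree with the paper's where you spell them out (e.g.\ $\mathcal{R}_1^{(i,1,1,t)}=\mathcal{G}_0^{(i,t)}$ or $\mathcal{G}_5^{(i,1)}$; $\mathcal{R}_1^{(i,1,k,t)}$ via $\mathcal{G}_1^{(1,k,t)},\mathcal{G}_2^{(k,t,1)}$ or $\mathcal{G}_6^{(i,k)}$; $\mathcal{R}_2^{(1,j,t)}$ via $\mathcal{G}_8^{(t,j)}$ resp.\ $\mathcal{G}_{14}$; $\mathcal{R}_4^{(1,j,k)}$ via $\mathcal{G}_7^{(k,j)}$), and the bookkeeping of how the ranges of $\mathcal{G}_5,\dots,\mathcal{G}_8$ enlarge matches Lemma 3.9's proof.

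One sub-claim is, however, false as stated: that the side conditions on $\mathcal{R}_5$ (resp.\ $\mathcal{R}_6$) are ``precisely those forcing $\mathrm{LM}(\mathcal{R}_5)$ to contain a two-letter factor equal to $\mathrm{LM}(\mathcal{G}_3^{(a,b,c,d)})$.'' Take $j=k=s$ with $t\neq r$ and $t\neq i$, e.g.\ $\mathcal{R}_5^{(3,2,2,2,1,2)}=e_{3,2}e_{2,2}e_{1,2}-e_{1,2}e_{2,2}e_{3,2}$ for $n\geq 3$: the side conditions of Definition 3.2 are satisfied, yet neither $e_{3,2}e_{2,2}$ nor $e_{2,2}e_{1,2}$ is a leading monomial of any $\mathcal{G}_3^{(a,b,c,d)}$ (each fails the condition $b\neq c$). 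The reduction to zero still goes through, but it must begin with $\mathcal{G}_1$ or $\mathcal{G}_2$ (here $e_{3,2}e_{2,2}\to e_{3,1}e_{1,2}$), and in the subcase where the rewritten word lands in $e_{i,1}e_{1,t}e_{r,1}$ it is killed by the cubic relations $\mathcal{G}_{11}$, $\mathcal{G}_{12}$ rather than by $\mathcal{G}_3$ --- exactly the chain $\mathcal{G}_3,\mathcal{G}_1,\mathcal{G}_2,\mathcal{G}_{11},\mathcal{G}_{12}$ the paper uses at the end of its proof. So your ``reducing that factor and iterating'' reaches the right conclusion, but the stated mechanism would fail on these index configurations and needs to be replaced by the longer reduction. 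Apart from this, the argument is the paper's.
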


\begin{proof}
To obtain the self-reduced set $\mathfrak{G}$, we need to eliminate from $G \cup \bigcup^4_{i=1} G_{i}$ all occurrences of any element of
$\left\{ \mathrm{LM}(u): u \in G \cup \bigcup^4_{i=1} G_{i}\right\}$
as a subword of any element of $G\cup \bigcup^4_{i=1} G_{i}$.  We first note that any element $g \in \bigcup^4_{i=1} G_{i}$ is in normal form modulo
$G \cup \bigcup^4_{i=1} G_{i}\setminus \{g\}$.  So we only consider elements of $G$ (see Definition \ref{1}). For all $  k < i$, we have
\begin{align*}
&\mathcal{R}_1^{(i,j,k,t)} = e_{i,j} e_{j,k} e_{k,t} - e_{k,t} e_{j,k} e_{i,j}- e_{i,t}
\equiv  e_{i,1} e_{1,k} e_{k,t} - e_{k,t} e_{1,k} e_{i,1}- e_{i,t} \!\!\!\!\mod{G_{1}}
\\
& \equiv \delta_{t,1} \left( e_{i,1} e_{1,k} e_{k,1}- e_{1,1}^2 e_{i,1}\right)+ \widehat{\delta}_{t,1}\left( e_{i,1} e_{1,1} e_{1,t} - e_{1,t} e_{1,1} e_{i,1}\right)- e_{i,t}
 \!\!\!\!\mod{G_{3}\cup G_1},
\end{align*}
using the relations $\mathcal{G}^{(i,j,k)}_{1}$, $\mathcal{G}^{(j,k,i)}_{2}$, $\mathcal{G}^{(k,i)}_{6}$, $\mathcal{G}^{(1,k,t)}_{1}$ and $\mathcal{G}^{(k,t,1)}_{2}$. For $t\neq 1$ the last result coincides with $\mathcal{G}_{0}^{(i,t)}$. For $ t = 1$,
we combine the result with the set $\left\{\mathcal{G}_{5}^{(i,k)} |\, \text{for all}\, 1< i<k\right\}\subset G_3$ and obtain the set $\left\{\mathcal{G}_{5}^{(i,k)}|\, \text{for all}\, k\neq i\neq 1\right\}$. For all $ t< j$, we have
\begin{align*}
&\mathcal{R}_2^{(i,j,t)} = e_{i,j} e_{j,i} e_{i,t} - e_{i,t} e_{j,i} e_{i,j}- e_{i,t}
\equiv e_{i,j} e_{j,1} e_{1,t} - e_{1,t} e_{j,1} e_{i,j}- e_{i,t}\!\!\!\!\mod{G_{1}}
\\
& \equiv \delta_{i,1}\left( e_{1,j} e_{j,1} e_{1,t} - e_{1,t} e_{j,1} e_{1,j}\right) + \widehat{\delta}_{i,1}\left(e_{i,1} e_{1,1} e_{1,t} - e_{1,t} e_{1,1} e_{i,1}\right)- e_{i,t}
 \!\!\!\!\mod{G_{1}},
\end{align*}
using the relations $\mathcal{G}^{(j,i,t)}_{1}$, $\mathcal{G}^{(i,t,j)}_{2}$, $\mathcal{G}^{(i,j,1)}_{1}$ and $\mathcal{G}^{(j,1,i)}_{2}$.
For $i \neq 1$ the last result coincides with $\mathcal{G}_{0}^{(i,t)}$(if $t\neq 1$) and $\mathcal{G}_{5}^{(i,1)}$
(if $t =1$). For $i=1$, using the relations $\mathcal{G}^{(i)}_{14}$ and $\mathcal{G}^{(t,j)}_{8}$, we have
\begin{align*}
& \mathcal{R}_2^{(1,j,t)}
 \equiv  \left[\delta_{t,1} \left( e_{1,j} e_{j,1} e_{1,1} - e_{1,1} e_{j,1} e_{1,j}\right)+ \widehat{\delta}_{t,1}\left( e_{1,j} e_{j,1} e_{1,t} - e_{1,t} e_{j,1} e_{1,j}\right)- e_{1,t}\right]
\\ & \!\!\!\!\mod{G_{1}}
\equiv  \delta_{t,1}  \left( e_{1,j} e_{j,1} e_{1,1} - \tfrac{1}{2} e_{1,1}^3 -\tfrac{1}{2} e_{1,1}\right)     +\widehat{\delta}_{t,1}\left( e_{1,j} e_{j,1} e_{1,t} - e_{1,1}^2 e_{1,t}\right)\!\!\!\!\mod{G_{3}}.
\end{align*}
Clearly, for $t = 1$ the normal form of  $\mathcal{R}_2^{(1,j,t)}$ coincides with $\mathcal{G}^{(i)}_{15}$.
For $t\neq 1$, we combine the last result with the set  $\left\{ \mathcal{G}^{(j,t)}_7|\,\, \text{for all}\,\, 1<j<t \right\} \subset G_3$ and obtain the set  $\left\{ \mathcal{G}^{(j,t)}_7|\,\, \text{for all}\,\, 1\neq j\neq t \neq 1\right\}$.
For all $t<i$, we have
\begin{align*}
& \mathcal{R}_3^{(i,j,k,t)} = e_{i,j} e_{k,i} e_{t,k} - e_{t,k} e_{k,i} e_{i,j}+ e_{t,j}
\equiv e_{i,j} e_{1,i} e_{t,1} - e_{t,1} e_{1,i} e_{i,j}+ e_{t,j}\!\!\!\!\mod{G_{1}}
\\
&\equiv \delta_{j,1} (e_{i,1} e_{1,i} e_{t,1} - e_{t,1} e_{1,i} e_{i,1})+ \widehat{\delta}_{j,1}
( e_{1,j} e_{1,1} e_{t,1} - e_{t,1} e_{1,1} e_{1,j})+ e_{t,j}
 \!\!\!\!\mod{G_{1}},
\end{align*}
using the relations $\mathcal{G}^{(k,i,t)}_{2}$, $\mathcal{G}^{(t,k,i)}_{1}$, $\mathcal{G}^{(i,j,1)}_{2}$ and $\mathcal{G}^{(1,i,j)}_{1}$.
For $j\neq 1$ the monic form of the last result coincides with $\mathcal{G}^{(j,t)}_{0}$ (if $t\neq 1$) and $\mathcal{G}^{(j,1)}_{8}$(if $t =1$). For $j=1$, using the relations $\mathcal{G}^{(i)}_{13}$ and $\mathcal{G}^{(t,i)}_{5}$, we have
\begin{align*}
&\mathcal{R}^{(i,1,k,t)}_3
\equiv \delta_{t,1}\left(e_{i,1} e_{1,i} e_{1,1} - \tfrac{1}{2} e_{1,1}^3+\tfrac{1}{2} e_{1,1}        \right) +\widehat{\delta}_{t,1}(e_{i,1} e_{1,i} e_{t,1} - e^2_{1,1} e_{t,1})
 \!\!\!\!\mod{G_{3}}.
\end{align*}
Clearly, for $t =1$ the normal form of $\mathcal{R}^{(i,1,k,t)}_3$ coincides with $\mathcal{G}^{(i)}_{16}$. For $t\neq 1$, we combine the last result
with the set $\left\{\mathcal{G}^{(i,t)}_6| \,\, \text{for all}\,\, 1<i<t   \right\}\subset G_3 $ and obtain the set $\left\{\mathcal{G}^{(i,t)}_6| \,\, \text{for all}\,\, 1\neq i\neq t \neq 1  \right\}$.
For $k<j$, we have
\begin{align*}
&\mathcal{R}_4^{(i, j,k)} = e_{i,j} e_{k,i} e_{i,k} - e_{i,k} e_{k,i} e_{i,j}+ e_{i,j}
 \equiv e_{1,j} e_{k,1} e_{i,k} - e_{i,k} e_{k,1} e_{1,j}+ e_{i,j}\!\!\!\!\mod{G_{1}}
\\
&\equiv \delta_{i,1}(e_{1,j} e_{k,1} e_{1,k} - e_{1,k} e_{k,1} e_{1,j})
  + \widehat{\delta}_{i,1} \left(e_{1,j} e_{1,1} e_{i,1} - e_{i,1} e_{1,1} e_{1,j}\right) + e_{i,j}
\!\!\!\!\mod{G_{1}}
\\
& \equiv \delta_{i,1} \left( e_{1,j} e_{k,1} e_{1,k} - e_{1,1}^2 e_{1,j}+ e_{1,j}\right)+\widehat{\delta}_{i,1} \left(e_{1,j} e_{1,1} e_{i,1} - e_{i,1} e_{1,1} e_{1,j}+e_{i,j}\right) \!\!\!\!\mod{G_{3}},
\end{align*}
using the relations $\mathcal{G}^{(i,j,k)}_{2}\!\! $, $\mathcal{G}^{(k,i,j)}_{1}$\!\!, $\mathcal{G}^{(k,1,i)}_{2}$\!\!, $\mathcal{G}^{(i,k,1)}_{1}$ and $\mathcal{G}^{(k,j)}_{7}$.
For $i \neq 1$ the monic form of the last result coincides with $\mathcal{G}^{(j,i)}_{0}$. For $i=1$, we combine the last result with the set $\left\{ \mathcal{G}^{(j,k)}_{8}| \,\, \text{for all}\,\, 1<j<k\right\}\subset G_3$ and obtain the set
$\left\{\mathcal{G}^{(j,k)}_{8}| \,\, \text{for all}\,\,1\neq j \neq k\right\}$.
For $r< i$, $j\neq k \,\,\text{or}\,\,{t \neq r}$, and $ s\neq k\,\, \text{or}\,\, t \neq i$, we have
\begin{align*}
&\mathcal{R}_5^{(i,j,k,t,r,s)}= e_{i,j}e_{k,t}e_{r,s} - e_{r,s} e_{k,t} e_{i,j}
\equiv \delta_{j,k} \left(e_{i,j} e_{j,t} e_{r,s} - e_{r,s} e_{j,t} e_{i,j}\right)\\& + \widehat{\delta}_{j,k}
\delta_{i,t} \left( e_{i,j}e_{k,i}e_{r,s} - e_{r,s} e_{k,i} e_{i,j}\right)\!\!\!\!\mod{G_{1}}
\equiv
\delta_{j,k}\Big[ \delta_{i,t} \left(e_{i,j} e_{j,i} e_{r,s} - e_{r,s} e_{j,i} e_{i,j}\right) \\&+ \widehat{\delta}_{i,t}\left(e_{i,1} e_{1,t} e_{r,s} - e_{r,s} e_{1,t} e_{i,1}\right) \Big]    \!\!\!\!\mod{G_{1}}\equiv
 \delta_{j,k}\left[ \delta_{s,1} \widehat{\delta}_{i,t}\left(e_{i,1} e_{1,t} e_{r,1} - e_{r,1} e_{1,t} e_{i,1}\right) \right]\\& \!\!\!\!\mod{G_{1}}\equiv 0 \!\!\!\!\mod{G_{3}},
\end{align*}
using the relations $\mathcal{G}^{(i,j,k,t)}_{3}\!\!$, $\mathcal{G}^{(k,t,i,j)}_{3}\!\!$, $\mathcal{G}^{(i,j,t)}_{1}\!\!$, $\mathcal{G}^{(j,t,i)}_{2}\!\!$, $\mathcal{G}^{(j, i, r,s)}_{3}\!\!$, $\mathcal{G}^{(r,s,j,i)}_{3}\!\!$, $\mathcal{G}^{(1,t,r,s)}_{3}\!\!$, $\mathcal{G}^{(r,s,1,t)}_{3}\!\!$,  $\mathcal{G}^{(i,t,r)}_{11}\!\!$ and  $\mathcal{G}^{(r,t,i)}_{11}\!\!$.
Similarly, we can show that  $\mathcal{R}_6^{(i,j,k,t,s)}\equiv  0 \!\!\mod{G_1 \cup G_{3}}$.
\end{proof}
The following lemma plays a crucial role in proving that the set $\mathfrak{G}$ of Lemma \ref{G*} is a Gr\"obner basis for the ideal $I$.
\begin{lemma}\label{Gh}
For the universal enveloping algebra $\mathfrak{A}$, either
\[
 (i)\, \dim \mathfrak{A}= \infty, \text{or}\qquad
(ii)\, \dim \mathfrak{A}< \infty\,\, \text{and} \,\, \dim \mathfrak{A}\geq 4n^2 +1.\]
\end{lemma}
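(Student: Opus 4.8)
The plan is to bound $\dim\mathfrak{A}$ from below by producing five pairwise inequivalent finite-dimensional irreducible representations of $\J$, of dimensions $1,n,n,n,n$, and then invoking the universal property of $\mathfrak{A}$ together with the Jacobson density theorem. The five representations are: the trivial map $\rho_0=0$; the inclusion $\rho_1\colon \J=M_n(F)\hookrightarrow (M_n(F))_-$, $a\mapsto a$, which is a representation precisely because $\langle abc\rangle=abc-cba$ is the triple commutator on $M_n(F)$; its negative $\rho_2=-\rho_1$, which is again a representation since $(-\rho(a))(-\rho(b))(-\rho(c))-(-\rho(c))(-\rho(b))(-\rho(a))=-(\rho(a)\rho(b)\rho(c)-\rho(c)\rho(b)\rho(a))$; and the two twisted maps $\rho_3(a)=\sqrt{-1}\,a^{T}$ and $\rho_4=-\rho_3$, where $a^{T}$ denotes the transpose. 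The key verification is that $\rho_3$ is a representation: transposition is an anti-automorphism, so $\rho_3(\langle abc\rangle)=\sqrt{-1}\,(abc-cba)^{T}=\sqrt{-1}\,(c^{T}b^{T}a^{T}-a^{T}b^{T}c^{T})$, while $\rho_3(a)\rho_3(b)\rho_3(c)-\rho_3(c)\rho_3(b)\rho_3(a)=(\sqrt{-1})^{3}(a^{T}b^{T}c^{T}-c^{T}b^{T}a^{T})=\sqrt{-1}\,(c^{T}b^{T}a^{T}-a^{T}b^{T}c^{T})$, and the two expressions agree; here it is essential that $F$ is algebraically closed, so that $\sqrt{-1}\in F$ and $(\sqrt{-1})^{3}=-\sqrt{-1}$.

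Next I would check that these five representations are irreducible and pairwise inequivalent. For irreducibility, the unital associative subalgebra of $\mathrm{End}(V_k)$ generated by $\rho_k(\J)$ is the whole of $\mathrm{End}(V_k)$: trivially for $\rho_0$ (the image is $F\cdot 1$), and because $\rho_k(\J)$ already spans $M_n(F)$ when $k=1,2,3,4$; hence each $V_k$ is a simple module over that subalgebra. For inequivalence, $\rho_0$ has dimension $1$ while the others have dimension $n$, and the matrices $\rho_1(E_{1,1})=E_{1,1}$, $\rho_2(E_{1,1})=-E_{1,1}$, $\rho_3(E_{1,1})=\sqrt{-1}\,E_{1,1}$, $\rho_4(E_{1,1})=-\sqrt{-1}\,E_{1,1}$ have pairwise distinct spectra, so no two of $\rho_1,\rho_2,\rho_3,\rho_4$ can be conjugate, i.e.\ equivalent in the sense of the definition.

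By the universal property of $(\mathfrak{A},i)$ each $\rho_k$ factors as $\rho_k=\bar\rho_k\circ i$ for a unital algebra homomorphism $\bar\rho_k\colon\mathfrak{A}\to\mathrm{End}(V_k)$, whose image is exactly the subalgebra generated by $\rho_k(\J)$, namely all of $\mathrm{End}(V_k)\cong M_{d_k}(F)$ with $d_0=1$ and $d_1=\dots=d_4=n$. Thus the $V_k$ are pairwise non-isomorphic simple $\mathfrak{A}$-modules, each with endomorphism algebra $F$ by Schur's lemma. Applying the Jacobson density theorem to the finite-dimensional semisimple $\mathfrak{A}$-module $\bigoplus_{k=0}^{4}V_k$, whose endomorphism ring is $\prod_{k=0}^{4}F$, the combined homomorphism $\mathfrak{A}\to\bigoplus_{k=0}^{4}M_{d_k}(F)$ is surjective, so $\dim\mathfrak{A}\ge\sum_{k=0}^{4}d_k^{2}=1+4n^{2}$. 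Consequently either $\dim\mathfrak{A}=\infty$, or $\dim\mathfrak{A}<\infty$ and $\dim\mathfrak{A}\ge 4n^{2}+1$, as claimed. The one genuinely new ingredient is the construction of the twisted representations $\rho_3,\rho_4$ — the observation that scaling an anti-automorphism of $M_n(F)$ by a square root of $-1$ repairs the sign obstruction; everything after that is standard representation-theoretic bookkeeping.
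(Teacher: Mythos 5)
Your proposal is correct and follows essentially the same route as the paper: the four representations $\rho_1=\mathrm{id}$, $\rho_2=-\mathrm{id}$, $\rho_3(a)=\sqrt{-1}\,a^{T}$, $\rho_4=-\rho_3$ (together with the trivial one) are exactly those constructed in the paper's proof, including the key observation that scaling the transpose by $\sqrt{-1}$ fixes the sign. The only differences are cosmetic and in your favor: you use distinct spectra rather than traces to establish inequivalence, and you spell out the Schur/Jacobson-density step showing $\mathfrak{A}$ surjects onto $F\oplus M_n(F)^{\oplus 4}$, which the paper asserts without detail (and slightly misstates by calling the image a ``subalgebra'' rather than a quotient).
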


\begin{proof}
Suppose that $\dim(\mathfrak{A})< \infty$. We show that over an algebraically closed field $F$, there exist four inequivalent irreducible representations of degree $n$ of the anti-Jordan triple system $\mathfrak{J}$, in addition to the trivial representation of degree $1$. For $k = 1,\dots,4$, we define the following maps:
\begin{align*}
\rho_k& : \mathfrak{J} \to \mathrm{End}\, V_k,
\\
&\rho_1 (E_{i,j}) = E_{i,j},\quad \rho_2 (E_{i,j})= -E_{i,j},\quad \rho_3(E_{i,j})= \mathrm{I}\, E_{j,i},\quad \rho_4(E_{i,j}) = -\mathrm{I}\, E_{j,i},
\end{align*}
where $\mathrm{I} = \sqrt{-1}$. Our first step is to show that the maps $\rho_{k}$, $k= 1,\dots,4$ are representations
of the anti-Jordan triple system $\mathfrak{J}$. Clearly $\rho_1$ is a representation (the natural representation). For $\rho_2$, we have
\begin{align*}
\rho_2\left(\langle E_{i,j}, E_{k,\ell}, E_{r,s}\rangle\right) &=  \rho_2\left( \delta_{j,k} \delta_{\ell,r} E_{i,s} - \delta_{s,k}\delta_{\ell,i} E_{r,j}\right)
 = -\delta_{j,k} \delta_{\ell,r} E_{i,s} + \delta_{s,k}\delta_{\ell,i} E_{r,j},
\end{align*}
on the other hand, we have
\begin{align*}
\langle \rho_2(E_{i,j}), \rho_2(E_{k,\ell}), \rho_2(E_{r,s})    \rangle & =
 \rho_2 (E_{i,j}) \rho_2( E_{k,\ell})\rho_2(E_{r,s}) - \rho_2(E_{r,s}) \rho_2(E_{k,\ell}) \rho_{2}(E_{i,j})
\\
&= -\delta_{j,k} \delta_{\ell,r} E_{i,s} + \delta_{s,k} \delta_{\ell,i} E_{r,j}.
\end{align*}
Thus $ \rho_2$ is a representation. For $\rho_3$, we have
\begin{align*}
\rho_3\left(\langle E_{i,j}, E_{k,\ell}, E_{r,s}\rangle\right) &= \rho_3\left( \delta_{j,k} \delta_{\ell,r} E_{i,s} - \delta_{s,k}\delta_{\ell,i} E_{r,j}\right)= \delta_{j,k} \delta_{\ell,r}\,\mathrm{I} E_{s,i} - \delta_{s,k}\delta_{\ell,i}\, \mathrm{I} E_{j,r},
\end{align*}
on the other hand, we have
\begin{align*}
\langle \rho_3(E_{i,j}), \rho_3(E_{k,\ell}), \rho_3(E_{r,s})    \rangle & =
 \rho_3 (E_{i,j}) \rho_3( E_{k,\ell})\rho_3(E_{r,s}) - \rho_3(E_{r,s}) \rho_3(E_{k,\ell}) \rho_{3}(E_{i,j})
\\
&= -\, \delta_{i,\ell} \delta_{k,s}\, \mathrm{I} E_{j,r} + \delta_{r,\ell} \delta_{k,j}\, \mathrm{I} E_{s,i}.
\end{align*}
Similarly, we can show that $\rho_4$ is a representation. We now show that for all $i,j = 1,\dots, 4 $ and $i\neq j$, the representations $\rho_{i}$ and $\rho_{j}$ are inequivalent. Indeed, there is no matrix $T$ so that
 \[ \rho_i(x) = T^{-1} \rho_j(x)\, T, \quad \text{for all} \,\, x \in  \mathfrak{J},\, i\neq j.\]
This is easily seen by checking the trace on the both sides and using the definitions of the representations
: $\mathrm{Tr} (\rho_i(x)) \neq \mathrm{Tr}(  T^{-1} \rho_j(x)\, T) = \mathrm{Tr}(\rho_{j}(x))$.
The representations $\rho_{i}$, $i= 1,\dots,4$ of $\mathfrak{J}$ can be extended to representations of the universal envelope $\mathfrak{A}$.  Hence $\mathfrak{A}$ has a subalgebra of dimension $4n^2+1$, which is isomorphic to the direct sum of the matrix algebras corresponding to these representations.
\end{proof}
We now can state the main theorem of this section.
\begin{theorem}\label{ba}
With notation as above. If $\mathfrak{J}$ is the anti-Jordan triple system of all $n \times n$ matrices $( n\geq 2)$ then:
 \begin{enumerate}[$(i)$]
   \item  The set $\mathfrak{G}$ is a Gr\"obner basis for the ideal $I$. \label{poo}
   \item The universal enveloping algebra $\mathfrak{A}$ of $\mathfrak{J}$ is finite-dimensional with basis $\mathfrak{B}$ consists of $ 4 n^2+1$ monomials:\label{po}
\begin{align*}
 \mathfrak{B} & = \left\{1,\, e_{i,j},\,   e_{i,1} e_{1,j},\, e_{1,1}^2 e_{1,j},\, e^4_{1,1}|\, i,j\in \Omega  \right\}\cup   \{ e_{1,i} e_{j,1}|\, i,j \in \Omega, (i,j)\neq (1,1) \}
\\& \quad\cup \{ e_{1,i} e_{1,1} e_{j,1}|\, i,j \in \Omega,\,  j \neq 1\}.     \end{align*}
 \end{enumerate}
\end{theorem}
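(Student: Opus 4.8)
The plan is to exploit Lemma~\ref{Gh} in order to avoid verifying directly that every composition of elements of $\mathfrak{G}$ has zero normal form, replacing that with a single count of normal words. The first step is to record the leading monomial of each family in Lemma~\ref{G*}. With the deglex order of Definition~\ref{1}, the index restriction attached to each family forces the displayed first monomial to dominate: for instance $\mathrm{LM}(\mathcal{G}_{0}^{(i,j)})=e_{i,1}e_{1,1}e_{1,j}$ because $i\neq 1$, $\mathrm{LM}(\mathcal{G}_{1}^{(i,j,k)})=e_{i,j}e_{j,k}$ because $j\neq 1$, $\mathrm{LM}(\mathcal{G}_{3}^{(i,j,k,\ell)})=e_{i,j}e_{k,\ell}$, $\mathrm{LM}(\mathcal{G}_{4}^{(i,j)})=e_{i,j}e_{j,i}$ because $i,j\neq 1$, $\mathrm{LM}(\mathcal{G}_{13}^{(i)})=e_{1,1}e_{1,i}e_{i,1}$, and $\mathrm{LM}(\mathcal{G}_{19})=e_{1,1}^{5}$, with the rest obtained in the same way.

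The combinatorial core is to prove that the set $N$ of words of $X^{\ast}$ containing no $\mathrm{LM}(g)$, $g\in\mathfrak{G}$, as a subword equals the $(4n^{2}+1)$-element set $\mathfrak{B}$ in the statement. The inclusion $\mathfrak{B}\subseteq N$ is a bounded inspection of the $4n^{2}+1$ candidate monomials against the list of leading monomials. For $N\subseteq\mathfrak{B}$ I would induct on degree. The relation $\mathcal{G}_{3}$ already forbids every length-two word $e_{a,b}e_{c,d}$ with $b\neq c$ and $a\neq d$, and a short case analysis on which of the remaining indices are equal to $1$, using $\mathcal{G}_{1},\mathcal{G}_{2},\mathcal{G}_{4}$, shows that among the length-two words only $e_{i,1}e_{1,j}$ and $e_{1,i}e_{j,1}$ with $(i,j)\neq(1,1)$ (together with $e_{1,1}^{2}$) avoid all leading monomials. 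For a word of length $\ge 3$ one appends one letter at a time to a normal word and checks that some length-two or length-three window of the result is the leading monomial of one of $\mathcal{G}_{3},\mathcal{G}_{5},\dots,\mathcal{G}_{16}$; the outcome is that the only words allowed to grow past length two are $e_{1,1}^{2}e_{1,j}$, $e_{1,i}e_{1,1}e_{j,1}$ ($j\neq 1$) and $e_{1,1}^{4}$, and these are killed at length four or five by $\mathcal{G}_{11},\mathcal{G}_{12},\mathcal{G}_{13},\mathcal{G}_{17},\mathcal{G}_{18},\mathcal{G}_{19}$. I expect this bookkeeping to be the main obstacle: the words have to be sorted by the linkage pattern of consecutive index pairs and one must check that precisely these three short exceptions survive and nothing else.

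Granting $N=\mathfrak{B}$, the remainder is formal and no composition need be computed. Each element of $\mathfrak{G}$ lies in $I$, since it arises in Lemmas~\ref{first}--\ref{G*} as a normal form of a composition of elements of $I$ (up to self-reduction); hence, using $\mathrm{LM}(ugv)=u\,\mathrm{LM}(g)\,v$ for the deglex order, every word divisible by some $\mathrm{LM}(g)$ belongs to $\mathrm{LM}(I)$, so $N(I)\subseteq N$. By Proposition~\ref{C(I)proposition}, $\dim\mathfrak{A}=\dim C(I)=|N(I)|\le|N|=4n^{2}+1$, so $\dim\mathfrak{A}<\infty$; Lemma~\ref{Gh} then forces $\dim\mathfrak{A}\ge 4n^{2}+1$, whence $\dim\mathfrak{A}=4n^{2}+1=|N|$. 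Together with $N(I)\subseteq N$ this gives $N(I)=N=\mathfrak{B}$. But $N(I)=N$ says exactly that the leading monomial of every nonzero element of $I$ is divisible by some $\mathrm{LM}(g)$, $g\in\mathfrak{G}$, which, as $\mathfrak{G}$ is self-reduced by Lemma~\ref{G*}, is the definition of a Gr\"obner basis of $I$; this proves part~$(i)$. Finally $\mathfrak{A}\cong C(I)$ has the images of the words in $N(I)=\mathfrak{B}$ as a basis, which proves part~$(ii)$.
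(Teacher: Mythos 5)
Your proposal is correct and follows essentially the same route as the paper: both rest on the count of the $4n^2+1$ words avoiding the leading monomials of $\mathfrak{G}$, squeezed against the lower bound $\dim\mathfrak{A}\ge 4n^2+1$ from Lemma~\ref{Gh}, so that no composition ever needs to be reduced. The only (cosmetic) difference is that you deduce the Gr\"obner property directly from $N(I)=N$ and the definition, whereas the paper phrases the same squeeze as a contradiction via Theorem~\ref{di}; your version is arguably the cleaner write-up of the identical idea.
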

\begin{proof}
By Lemma \ref{G*} the set $\mathfrak{G}$ is the self-reduced form of the set $ G \cup \bigcup^4_{i=1}G_{i}$, so it remains to show that $\mathfrak{G}$ is closed under any composition.
We note first that there are $4n^2 +1$ monomials of $F\langle X \rangle$ that do not have leading monomials of $\mathfrak{G}$ as factors, namely,
\[
1, \quad
e_{i,j}, \quad
e_{i,1} e_{1,j}, \quad
e_{1,i} e_{j,1}, \, (i,j)\neq (1,1),  \quad
e_{1,i} e_{1,1} e_{j,1}, \, j \neq 1, \quad
e_{1,1}^2 e_{1,j}, \quad
e^4_{1,1},
\]
{for all} $ i,j \in \Omega $. Suppose on the contrary that $\mathfrak{G}$ is not a Gr\"obner basis for the ideal $I$. Then $\mathfrak{G}$ is not closed under at least one composition by Theorem \ref{di}, i.e., there exist $f, g \in \mathfrak{G}$ such that $fx - y g
\not\equiv 0\!\!\! \mod \mathfrak{G}$. We add the normal form of $fx - yg$ to the set $\mathfrak{G}$. Hence, the number of the monomials of $F\langle X \rangle$ that do not have the leading monomials of $\mathfrak{G}$ as factors is less than $4n^2 +1$. Hence, $\dim \mathfrak{A} < 4n^2 +1$. But Lemma \ref{Gh} implies that $\dim \mathfrak{A} \geq 4n^2+1$, which is a contradiction. This shows that $\mathfrak{G}$ is a Gr\"obner basis for the ideal $I$. The proof \eqref{po} is obvious by using \eqref{poo} and Proposition \ref{C(I)proposition}.
\end{proof}


\section{The structure constants of $\mathfrak{A}$}\label{stc}

In this section we use Theorem \ref{ba} and the relations of Lemma \ref{G*} to compute the structure constants of the universal enveloping algebra $\mathfrak{A}$.

\begin{lemma}\label{auto}
Define an anti-automorphism $\eta\colon F \langle X  \rangle \to F \langle X  \rangle$ of the free associative algebra
generated by $X=\{ e_{i,j}\}_{{i,j}\in \Omega} $  by $\eta(e_{i,j}) = e_{j,i}$. Then
  $\eta$ induces an anti-automorphism of order $2$ on $\mathfrak{A}$ (also denoted $\eta$).
  \end{lemma}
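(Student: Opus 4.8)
The plan is to verify that $\eta$ is well-defined on $\mathfrak{A} = F\langle X\rangle/I$, that it is an anti-homomorphism, that it is bijective, and that $\eta^2 = \mathrm{id}$. The only substantive point is well-definedness: since $\eta$ is already given as an anti-automorphism of the free algebra $F\langle X\rangle$ (it sends the generator $e_{i,j}$ to $e_{j,i}$, and an anti-automorphism is determined by its action on generators once we declare $\eta(uv) = \eta(v)\eta(u)$), and since $\eta$ is clearly involutive on $F\langle X\rangle$ (as $\eta(\eta(e_{i,j})) = \eta(e_{j,i}) = e_{i,j}$), it descends to an anti-automorphism of the quotient precisely when $\eta(I) \subseteq I$; and because $\eta$ is involutive this containment is automatically an equality, giving $\eta(I) = I$. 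So the entire content of the lemma reduces to checking $\eta(I) \subseteq I$.

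To check $\eta(I) \subseteq I$, first I would reduce to the generators: since $I$ is generated as a two-sided ideal by $G$ and $\eta$ is an anti-automorphism of $F\langle X\rangle$, we have $\eta(I) = $ the two-sided ideal generated by $\eta(G)$, so it suffices to show $\eta(g) \in I$ for every $g \in G$. Then I would compute $\eta$ on each of the six families $\mathcal{R}_1,\dots,\mathcal{R}_6$ from Definition \ref{1}. The key mechanism is that $\eta$ reverses a length-three product $e_{a,b}e_{c,d}e_{e,f}$ to $e_{f,e}e_{d,c}e_{b,a}$, so for instance
\[
\eta\bigl(\mathcal{R}_1^{(i,j,k,t)}\bigr) = \eta\bigl(e_{i,j}e_{j,k}e_{k,t} - e_{k,t}e_{j,k}e_{i,j} - e_{i,t}\bigr) = e_{t,k}e_{k,j}e_{j,i} - e_{j,i}e_{k,j}e_{t,k} - e_{t,i},
\]
and one recognizes the right-hand side, up to an overall sign, as $-\mathcal{R}_1^{(t,k,j,i)}$ (with the index constraint $k<i$ on the original becoming the constraint needed for the image, using the symmetric range over $\Omega$), hence an element of $I$. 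I would carry out the analogous identification for each family: I expect $\eta$ to map $\mathcal{R}_1$-type relations to $\mathcal{R}_1$-type, $\mathcal{R}_2$ to $\mathcal{R}_2$, $\mathcal{R}_3$ to $\mathcal{R}_3$ (or possibly $\mathcal{R}_4$), $\mathcal{R}_4$ to $\mathcal{R}_4$ (or $\mathcal{R}_3$), and the commutator-type relations $\mathcal{R}_5, \mathcal{R}_6$ to relations of the same shape, each time up to sign and a permutation of the index parameters, with the side conditions transforming consistently. Since in each case $\eta(g)$ lands in the $F$-span of $G$ (indeed is $\pm$ a single generator), we get $\eta(G) \subseteq I$, hence $\eta(I) \subseteq I$.

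Once well-definedness on $\mathfrak{A}$ is established, the remaining assertions are immediate: the induced map on $\mathfrak{A}$, still called $\eta$, satisfies $\eta(\bar u \bar v) = \overline{\eta(uv)} = \overline{\eta(v)\eta(u)} = \eta(\bar v)\eta(\bar u)$, so it is an anti-homomorphism; $\eta^2 = \mathrm{id}$ on $\mathfrak{A}$ because it holds on $F\langle X\rangle$ and passes to the quotient; and an involutive anti-homomorphism is automatically bijective, so $\eta$ is an anti-automorphism of order $2$ (order exactly $2$ since $\eta \neq \mathrm{id}$, as $\eta(e_{1,2}) = e_{2,1} \neq e_{1,2}$ in $\mathfrak{A}$, which one can see because $e_{1,2}$ and $e_{2,1}$ are distinct elements of the monomial basis $\mathfrak{B}$ from Theorem \ref{ba}). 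The main obstacle is purely the bookkeeping in the previous paragraph: matching each $\eta(\mathcal{R}_a)$ to the correct member of $G$ and confirming that the inequality side conditions ($k<i$, $t<j$, $t<i$, $k<j$, and the disjunctive conditions on $\mathcal{R}_5,\mathcal{R}_6$) are respected under the index permutation induced by $\eta$ — this is routine but must be done for all six families.
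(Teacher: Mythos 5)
Your overall strategy is the same as the paper's: reduce everything to showing that $\eta$ maps a generating set of $I$ into $I$, then observe that the anti-homomorphism property, involutivity, and bijectivity all descend from $F\langle X\rangle$. The only difference is the choice of generating set --- you work with the defining generators $G$ of Definition~\ref{1}, while the paper verifies invariance of the self-reduced Gr\"obner basis $\mathfrak{G}$ of Lemma~\ref{G*} (which generates the same ideal); both choices are legitimate, and the paper likewise only exhibits two sample computations and asserts the rest.

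One detail in your illustrative computation is wrong and would trip you up if you executed the plan literally. You have $\eta\bigl(\mathcal{R}_1^{(i,j,k,t)}\bigr)= e_{t,k}e_{k,j}e_{j,i} - e_{j,i}e_{k,j}e_{t,k} - e_{t,i}$, which is $+\mathcal{R}_1^{(t,k,j,i)}$ (no sign), but $\mathcal{R}_1^{(t,k,j,i)}$ belongs to $G$ only under the condition $j<t$, which has nothing to do with the original condition $k<i$; the constraint does \emph{not} ``become the constraint needed for the image.'' When $t<j$ the image is instead $-\mathcal{R}_3^{(j,i,k,t)}$, and when $t=j$ it is $-\mathcal{R}_4^{(j,i,k)}$ (here the condition $k<i$ is exactly what $\mathcal{R}_4$ requires); similarly $\eta$ sends $\mathcal{R}_2^{(i,j,t)}$ to $-\mathcal{R}_3^{(j,i,i,t)}$, not to an $\mathcal{R}_2$. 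So the families mix more than you anticipate. This is a repairable bookkeeping error rather than a gap: every image is still $\pm$ an element of $G$, and the cleanest way to see this without any case analysis is to note that $I$ is also generated by the full unrestricted family $R(a,b,c)=\phi(a)\phi(b)\phi(c)-\phi(c)\phi(b)\phi(a)-\phi(\langle abc\rangle)$ over all basis triples (each such $R$ is $0$ or $\pm$ a member of $G$, by the skew-symmetry $\langle abc\rangle=-\langle cba\rangle$), and that $\eta(R(a,b,c))=R(c^{T},b^{T},a^{T})$ because matrix transposition satisfies $\langle abc\rangle^{T}=\langle c^{T}b^{T}a^{T}\rangle$; hence $\eta$ permutes this generating set and $\eta(I)=I$ immediately.
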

\begin{proof}
It suffices to show that the ideal $I= \langle \mathfrak{G} \rangle$ (see Theorem \ref{ba}) is invariant under the action of $\eta$. We have, for example,
\begin{align*}
&\eta \left( \mathcal{G}^{(i,j)}_{0}\right) =  e_{j,1} e_{1,1} e_{1,i} - e_{1,i} e_{1,1} e_{j,1} - e_{j,i}
 =  \mathcal{G}^{(j,i)}_{0},
\\
&\eta \left( \mathcal{G}^{(i,j,k)}_{1}\right)  = e_{k,j}e_{j,i} - e_{k,1} e_{1,i} = \mathcal{G}_1^{(k,j,i)}.
\end{align*}
A similar argument applies to all the other elements of $\mathfrak{G}$.
\end{proof}
The next seven Propositions give the explicit structure constants of $\mathfrak{A}$.
\begin{proposition}\label{sf}
Let $i,j,k,\ell \in \Omega$. Then in $\mathfrak{A}$, we have

\begin{dgroup*} 
\begin{dmath}
\label{l1}
e_{i,j}\cdot e_{k,\ell} =
\delta_{j,k} \left[
 \delta_{i,\ell}\left\{\left(\delta_{i,1} + \widehat{\delta}_{i,1}\widehat{\delta}_{j,1}\right) e_{1,j} e_{j,1}
+\left( \delta_{j,1}\widehat{\delta}_{i,1}+\widehat{\delta}_{i,1}\widehat{\delta}_{j,1} \right)e_{i,1} e_{1,i}
  - \widehat{\delta}_{i,1}\widehat{\delta}_{j,1} e_{1,1}^2\right\}
+ \widehat{\delta}_{i,\ell} e_{i,1}e_{1,\ell}\right]
+ \widehat{\delta}_{j,k}\delta_{i,\ell}e_{1,j}e_{k,1},
\end{dmath}
\begin{dmath}
\label{l2}
e_{i,j}\cdot e_{k,1}e_{1,\ell}= \delta_{i,1} \Big[ \left(\delta_{j,k}\delta_{\ell,1}\left( \delta_{\ell,j}+ \tfrac{1}{2}\widehat{\delta}_{\ell,j} \right)+ \tfrac{1}{2}
\widehat{\delta}_{j,k}\delta_{k,\ell}\delta_{j,1}\right)
 e_{1,1}^3+ \widehat{\delta}_{j,k}\delta_{k,\ell}\widehat{\delta}_{j,1}e_{1,1}^2 e_{1,j}
  + \delta_{j,k}\left( 2 \delta_{\ell,j} \widehat{\delta}_{\ell,1} + \widehat{\delta}_{\ell,j} \left( \delta_{j,1} + \widehat{\delta}_{j,1} \widehat{\delta}_{\ell,1}\right)\right) e_{1,1}^2 e_{1,\ell}
 + {\tfrac{1}{2} \left(\delta_{\ell,1}\delta_{j,k} \widehat{\delta}_{\ell,j}- \widehat{\delta}_{j,k}\delta_{k,\ell} \delta_{j,1}    \right) e_{1,1}
  - \left(\delta_{j,k}\delta_{\ell,j} \widehat{\delta}_{\ell,1} +  \widehat{\delta}_{j,k}\delta_{k,\ell}\widehat{\delta}_{j,1}\right) e_{1,j}\Big]}+\widehat{\delta}_{i,1}\delta_{j,k} ( e_{1,\ell} e_{1,1} e_{i,1} + e_{i,\ell}),
\end{dmath}
\begin{dmath}
\label{l22}
e_{\ell,1} e_{1,k} \cdot e_{j,i} =
\delta_{i,1} \Big[ \left(\delta_{j,k}\delta_{\ell,1}\left( \delta_{\ell,j}+ \tfrac{1}{2}\widehat{\delta}_{\ell,j} \right)+ \tfrac{1}{2}
\widehat{\delta}_{j,k}\delta_{k,\ell}\delta_{j,1}\right)
 e_{1,1}^3+\widehat{\delta}_{j,k}\delta_{k,\ell}\widehat{\delta}_{j,1} e_{1,1}^2 e_{j,1}
     + \delta_{j,k}\left( 2 \delta_{\ell,j} \widehat{\delta}_{\ell,1} + \widehat{\delta}_{\ell,j} \left( \delta_{j,1} + \widehat{\delta}_{j,1} \widehat{\delta}_{\ell,1}\right)\right) \left(e_{1,1}^2 e_{\ell,1}+e_{\ell,1}\right)
 + {\tfrac{1}{2} \Big(\delta_{\ell,1}\delta_{j,k} \widehat{\delta}_{\ell,j} - \widehat{\delta}_{j,k}\delta_{k,\ell} \delta_{j,1}    \Big) e_{1,1}
 - \delta_{j,k}\delta_{\ell,j} \widehat{\delta}_{\ell,1} e_{j,1}\Big]}+\widehat{\delta}_{i,1}\delta_{j,k}
  \left( \delta_{\ell,1} e_{1,1}^2 e_{1,i}+ \widehat{\delta}_{\ell,1}\left( e_{1,i} e_{1,1} e_{\ell,1} + e_{\ell,i}\right)\right),
\end{dmath}
\begin{dmath}
\label{l3}
e_{i,j}\cdot e_{1,k} e_{\ell,1} = \delta_{j,1}\Big[ \tfrac{1}{2}\left( \delta_{i,k} \delta_{\ell,1}\widehat{\delta}_{i,1}
+\widehat{\delta}_{i,k}\widehat{\delta}_{\ell,1}\delta_{k,\ell}{\delta}_{i,1}\right)e_{1,1}^3+\widehat{\delta}_{i,k}\delta_{k,\ell}\widehat{\delta}_{i,1}\widehat{\delta}_{\ell,1} \, e_{1,1}^2 e_{i,1}
+  \delta_{i,k} \left( \delta_{i,1}
 + \widehat{\delta}_{i,1}\widehat{\delta}_{\ell ,1} \left(2\delta_{i,\ell}
+ \widehat{\delta}_{i,\ell}\right)\right)
e_{1,1}^2 e_{\ell,1}
+\tfrac{1}{2}\left(\widehat{\delta}_{i,k} \widehat{\delta}_{\ell,1}\delta_{k,\ell}\delta_{i,1} -\delta_{i,k} \delta_{\ell,1}\widehat{\delta}_{i,1}
\right)e_{1,1}
 +\widehat{\delta}_{i,1}\widehat{\delta}_{\ell,1}
 \left(\delta_{i,k} \delta_{i,\ell}
+\widehat{\delta}_{i,k}\delta_{k,\ell}  \right)e_{i,1}\Big]
+\widehat{\delta}_{j,1} \delta_{i,k} \left( \delta_{\ell,1} \left(e_{1,1}^2 e_{1,j} - e_{1,j}\right)
+\widehat{\delta}_{\ell,1} e_{1,j}e_{1,1}e_{\ell,1}\right);
\quad {(k,\ell)\neq (1,1)},
\end{dmath}
\begin{dmath}
\label{l4}
e_{1,\ell} e_{k,1}\cdot e_{j,i}
 = \delta_{j,1}\Big[ \tfrac{1}{2}\left( \delta_{i,k} \delta_{\ell,1}\widehat{\delta}_{i,1}
+\widehat{\delta}_{i,k}\widehat{\delta}_{\ell,1}\delta_{k,\ell}{\delta}_{i,1}\right)e_{1,1}^3+\widehat{\delta}_{i,k}\delta_{k,\ell}\widehat{\delta}_{i,1}\widehat{\delta}_{\ell,1}\, e_{1,1}^2 e_{1,i}
+  \delta_{i,k} \left( \delta_{i,1}
 + \widehat{\delta}_{i,1}\widehat{\delta}_{\ell ,1} \left(2\delta_{i,\ell}
+
\widehat{\delta}_{i,\ell}\right)\right)
\left(e_{1,1}^2  e_{1,\ell} -e_{1,\ell}  \right)
 +\tfrac{1}{2}\left(
\widehat{\delta}_{i,k}\widehat{\delta}_{\ell,1}\delta_{k,\ell}\delta_{i,1}  -\delta_{i,k} \delta_{\ell,1}\widehat{\delta}_{i,1}\right)e_{1,1}
 +\widehat{\delta}_{i,1}\widehat{\delta}_{\ell,1}
 \delta_{i,k} \delta_{i,\ell}
 e_{1,i}\Big]
+\widehat{\delta}_{j,1} \delta_{i,k} \left( \delta_{\ell,1}  e_{1,1}^2 e_{j,1}
+\widehat{\delta}_{\ell,1} e_{1,\ell}e_{1,1}e_{1,j}\right);\qquad  {(k,\ell)\neq (1,1)}.
\end{dmath}
\end{dgroup*}

\end{proposition}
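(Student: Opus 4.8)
The plan is to exploit Theorem~\ref{ba}. Since $\mathfrak{G}$ is a Gr\"obner basis for $I$, the set $\mathfrak{B}$ of $4n^2+1$ normal words is a basis of $\mathfrak{A}$, and every word $w\in X^\ast$ has a unique normal form in $C(I)$, obtained by repeatedly replacing a factor $\mathrm{LM}(g)$ (for $g\in\mathfrak{G}$) inside $w$ by $\mathrm{LM}(g)-g$. Because $\pi\colon F\langle X\rangle\to\mathfrak{A}$ restricts to a linear isomorphism $C(I)\cong\mathfrak{A}$ and is a ring map, the product in $\mathfrak{A}$ of two basis monomials $u,v\in\mathfrak{B}$ is precisely the image of the normal form of the word $uv$. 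Thus each of \eqref{l1}--\eqref{l4} is obtained by taking the (single) word on the left-hand side, running the rewriting procedure to completion, and grouping the resulting terms; termination is guaranteed because $\mathfrak{A}$ is finite-dimensional, equivalently because there are only finitely many normal words.

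Concretely, I would first establish \eqref{l1}. Here $e_{i,j}\cdot e_{k,\ell}$ is the degree-$2$ word $e_{i,j}e_{k,\ell}$, and I split into cases according to whether $j=k$ and whether $i=\ell$: if $j\neq k$ and $i\neq\ell$ the word is $\mathrm{LM}(\mathcal{G}_3^{(i,j,k,\ell)})$, hence $\equiv 0$; if $j\neq k$ and $i=\ell$ apply $\mathcal{G}_2^{(i,j,k)}$ when $i\neq 1$ and note the word is already normal when $i=1$; if $j=k$ and $i\neq\ell$ apply $\mathcal{G}_1^{(i,j,\ell)}$ when $j\neq 1$ and use normality when $j=1$; if $j=k$ and $i=\ell$ apply $\mathcal{G}_4^{(i,j)}$ when $i,j\neq 1$ and dispose of the residual cases $i=1$ or $j=1$ by hand. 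Re-assembling the sub-cases with the Kronecker symbols $\delta$ and $\widehat\delta$ gives \eqref{l1}.

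For \eqref{l2} (and likewise \eqref{l3}) one starts from the degree-$3$ word $e_{i,j}e_{k,1}e_{1,\ell}$ and reduces in stages: first collapse a degree-$2$ factor using $\mathcal{G}_1$--$\mathcal{G}_4$, then reduce the resulting degree-$3$ monomials with the length-$3$ rules $\mathcal{G}_0,\mathcal{G}_5,\dots,\mathcal{G}_{16}$, and finally normalise any occurrence of $e_{1,1}^3e_{1,i}$, $e_{1,1}^3e_{i,1}$ or $e_{1,1}^5$ via $\mathcal{G}_{17},\mathcal{G}_{18},\mathcal{G}_{19}$; again a finite case split on the coincidences among $i,j,k,\ell$ and $1$. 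Then \eqref{l22} and \eqref{l4} follow from \eqref{l2} and \eqref{l3} by applying the anti-automorphism $\eta$ of Lemma~\ref{auto}: since $\eta\bigl(e_{i,j}\cdot e_{k,1}e_{1,\ell}\bigr)=\eta(e_{1,\ell})\eta(e_{k,1})\eta(e_{i,j})=(e_{\ell,1}e_{1,k})\cdot e_{j,i}$, one only needs to apply $\eta$ to the right-hand side of \eqref{l2} and re-expand $\eta$ of each basis monomial in terms of $\mathfrak{B}$ (a short reduction, e.g.\ $e_{j,1}e_{1,1}^2\equiv e_{1,1}^2e_{j,1}+e_{j,1}$ by $\mathcal{G}_5^{(j,1)}$), and then match the outcome with \eqref{l22}.

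The work is bookkeeping rather than conceptual, and the main obstacle is to enumerate \emph{all} the degenerate sub-cases (an index equal to $1$, two indices equal, or both) without omission, and in each one to identify correctly which rule of $\mathfrak{G}$ rewrites the current monomial and whether the partially reduced word already lies in $\mathfrak{B}$ or still needs reduction. Collapsing the dozen-or-so sub-cases of each product into a single closed formula built from products of $\delta$'s and $\widehat\delta$'s, and verifying that this expression specialises correctly in every case, is the delicate step, and is most safely cross-checked with a symbolic computation.
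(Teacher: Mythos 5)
Your proposal is correct and follows essentially the same route as the paper: the author likewise computes each product by reducing the corresponding word modulo the Gr\"obner basis $\mathfrak{G}$ (for \eqref{l1} via $\mathcal{G}_3$, $\mathcal{G}_1$, $\mathcal{G}_2$, $\mathcal{G}_4$ with the same case split on $j=k$ and $i=\ell$; for \eqref{l2} by first invoking \eqref{l1} and then the length-three rules), and obtains \eqref{l22} and \eqref{l4} by applying the anti-automorphism $\eta$ of Lemma~\ref{auto} and re-normalising, exactly as you describe. The only difference is that the paper carries out some of the bookkeeping explicitly, which your outline defers to a symbolic check.
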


\begin{proof}
For \eqref{l1}, we use the relations $\mathcal{G}^{(i,j,k,\ell)}_{3}$, $\mathcal{G}^{(i,j,\ell)}_{1}$ and $\mathcal{G}^{(i,j,k)}_{2}$ and get
\begin{align*}
e_{i,j}\cdot e_{k,\ell} & =\delta_{j,k} \left(\delta_{i,\ell} e_{i,j} e_{j,i} + \widehat{\delta}_{i,\ell} e_{i,1}e_{1,\ell}\right)+ \widehat{\delta}_{j,k}\delta_{i,\ell}e_{1,j}e_{k,1}.
\end{align*}
Using the relation $\mathcal{G}^{(i,j)}_{4}$ implies
\begin{dmath*}
 e_{i,j}\cdot e_{k,\ell} = \delta_{j,k} \Big[\delta_{i,\ell}\left( \delta_{i,1} e_{1,j} e_{j,1}
+ \delta_{j,1}\widehat{\delta}_{i,1}e_{i,1} e_{1,i}
 + \widehat{\delta}_{i,1}\widehat{\delta}_{j,1}\left(e_{i,1} e_{1,i} +e_{1,j} e_{j,1} - e_{1,1}^2  \right)\right)
+ \widehat{\delta}_{i,\ell} e_{i,1}e_{1,\ell}\Big]
+ \widehat{\delta}_{j,k}\delta_{i,\ell}e_{1,j}e_{k,1}.
\end{dmath*}
This completes the proof of \eqref{l1}.
For\eqref{l2}, we use \eqref{l1} (of the present proposition) and obtain
\begin{align*}
\left(e_{i,j} e_{k,1}\right)e_{1,\ell}
 &= \delta_{j,k} \left(\delta_{i,1} e_{1,j} e_{j,1}
 e_{1,\ell}
+ \widehat{\delta}_{i,1} e_{i,1}e_{1,1}e_{1,\ell}\right)
+ \widehat{\delta}_{j,k}\delta_{i,1}e_{1,j}e_{k,1}e_{1,\ell}
\\
& = \delta_{i,1} \left( \delta_{j,k} e_{1,j} e_{j,1} e_{1,\ell}+ \widehat{\delta}_{j,k} e_{1,j} e_{k,1} e_{1,\ell}\right) + \widehat{\delta}_{i,1}\delta_{j,k} e_{i,1} e_{1,1} e_{1,\ell}.
\end{align*}
We now write  \[ A = e_{1,j} e_{j,1}  e_{1,\ell},\qquad B = \widehat{\delta}_{j,k} e_{1,j} e_{k,1} e_{1,\ell},\]
and use the relations $\mathcal{G}_{0}^{(i,\ell)}$ ( if $\ell \neq 1$) and $\mathcal{G}^{(i,1)}_{5}$ (if $\ell=1$) for the last term to obtain
\begin{align}\label{20}
\left(e_{i,j} e_{k,1}\right)e_{1,\ell} & = \delta_{i,1} \left( \delta_{j,k}\, A
+ B\right)
+ \widehat{\delta}_{i,1}\delta_{j,k} ( e_{1,\ell} e_{1,1} e_{i,1} + e_{i,\ell}).
\end{align}
Using the relations $\mathcal{G}_{10}^{(j)}$, $\mathcal{G}_{15}^{(j)}$ and $\mathcal{G}_{7}^{(j,\ell)}$ gives
\begin{align*}
A
&= \delta_{\ell,j}  \left[ \delta_{\ell,1} e_{1,1}^3 + \widehat{\delta}_{\ell,1}\left(2 e_{1,1}^2 e_{1,j} - e_{1,j}\right)\right]
  \\&\quad + \widehat{\delta}_{\ell,j}\left[ \delta_{j,1} e_{1,1}^2 e_{1,\ell}
 + \widehat{\delta}_{j,1}\left(\delta_{\ell,1} \tfrac{1}{2} \left( e_{1,1}^3 + e_{1,1}\right) + \widehat{\delta}_{\ell,1}e_{1,1}^2 e_{1,\ell}\right)\right]
 \\
 & = \delta_{\ell,1}\left[\left( \delta_{\ell,j}+ \tfrac{1}{2}\widehat{\delta}_{\ell,j} \right)  e_{1,1}^3 + \tfrac{1}{2} \widehat{\delta}_{\ell,j} e_{1,1}\right]
 + \left[ 2 \delta_{\ell,j} \widehat{\delta}_{\ell,1} + \widehat{\delta}_{\ell,j} ( \delta_{j,1} + \widehat{\delta}_{j,1} \widehat{\delta}_{\ell,1})\right] e_{1,1}^2 e_{1,\ell}\\&\quad - \delta_{\ell,j} \widehat{\delta}_{\ell,1} e_{1,j}.
\end{align*}
Using the relations $\mathcal{G}^{(j,k,\ell)}_{12}$,
 $\mathcal{G}^{(\ell)}_{14}$ and $\mathcal{G}^{(j,\ell)}_8$ gives
\begin{align*}
B &= \widehat{\delta}_{j,k} \delta_{k,\ell}  e_{1,j} e_{\ell,1} e_{1,\ell}
=  \widehat{\delta}_{j,k}\delta_{k,\ell} \left( \delta_{j,1} \tfrac{1}{2} \left(e_{1,1}^3 - e_{1,1}\right)
 + \widehat{\delta}_{j,1} (e_{1,1}^2 e_{1,j} - e_{1,j})\right).
\end{align*}
Using $A$ and $B$ in \eqref{20} and combining the coefficients completes the proof of \eqref{l2}.
The proof of \eqref{l22} is obvious by applying the anti-automorphism $\eta$ (see Lemma \ref{auto}) to both sides of \eqref{l2} (of the present Proposition) and using the relations $\mathcal{G}_{5}^{(j,1)}$, $\mathcal{G}_{5}^{(\ell,1)}$ and $\mathcal{G}_{8}^{(i,1)}$.
The proofs of \eqref{l3} and \eqref{l4} are similar.
\end{proof}
\begin{proposition}\label{ss}
Let $i,j,k, \ell \in \Omega$. Then in $\mathfrak{A}$, we have
\begin{dgroup*} 
\begin{dmath}
 e_{i,j} \cdot e_{1,k} e_{1,1} e_{\ell,1} = {\delta_{i,k}\left[- e_{1,j} e_{\ell,1}
+ \widehat{\delta}_{j,1} \delta_{j,\ell} \tfrac{1}{2} \left(e_{1,1}^4 + e_{1,1}^2\right)
 \right]}; \quad  {\ell \neq 1}, \label{g1}
\end{dmath}
\begin{dmath}
\label{g2}
e_{i,j} \cdot e_{1,1}^2 e_{1,k}
= \delta_{j,1}\left[ \delta_{i,1} \left(\delta_{k,1} e_{1,1}^4 + \widehat{\delta}_{k,1} e_{1,1}e_{1,k}\right)
+\widehat{\delta}_{i,1} \left( \delta_{i,k} \tfrac{1}{2} (e_{1,1}^4 - e_{1,1}^2) + e_{i,1} e_{1,k}\right)\right]
- \widehat{\delta}_{j,1} \delta_{i,1}\delta_{k,1} e_{1,j} e_{1,1},\notag
\end{dmath}
\begin{dmath}
e_{i,j} \cdot e_{1,1}^4 =  \delta_{j,1}\left[\delta_{i,1} e_{1,1} + \widehat{\delta}_{i,1}\left( e_{1,1}^2 e_{i,1} + e_{i,1}\right)\right]
- \widehat{\delta}_{j,1} \delta_{i,1} \left(e_{1,1}^2 e_{1,j} -e_{1,j}\right)
\label{g3}.
\end{dmath}
\end{dgroup*}
\end{proposition}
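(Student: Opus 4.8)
The plan is to compute each of the three products by repeatedly applying associativity in $\mathfrak{A}$ so as to reduce to the structure constants already recorded in Proposition \ref{sf}, and then to put the results into the monomial basis $\mathfrak{B}$ of Theorem \ref{ba} using the relations of Lemma \ref{G*} (above all the ``$e_{1,1}$-relations'' $\mathcal{G}_5$, $\mathcal{G}_6$, $\mathcal{G}_{11}$, $\mathcal{G}_{12}$ and $\mathcal{G}_{13}$--$\mathcal{G}_{19}$). For \eqref{g1} I would write $e_{i,j}\cdot e_{1,k}e_{1,1}e_{\ell,1} = \big((e_{i,j}\cdot e_{1,k})\,e_{1,1}\big)\,e_{\ell,1}$, expand $e_{i,j}\cdot e_{1,k}$ by \eqref{l1} into a combination of basis monomials of length at most two (nonzero only when $j=1$ or $i=k$), and then multiply on the right first by $e_{1,1}$ and then by $e_{\ell,1}$, using \eqref{l22} and \eqref{l4}, which supply the products of the two-letter monomials that occur (namely those of the form $e_{a,1}e_{1,b}$ and $e_{1,a}e_{b,1}$) with a further generator, each step being followed by a normal-form reduction. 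For \eqref{g2} I would similarly take $e_{i,j}\cdot e_{1,1}^2e_{1,k} = \big((e_{i,j}\cdot e_{1,1})\,e_{1,1}\big)\,e_{1,k}$, writing $e_{1,1}^2 = e_{1,1}e_{1,1}$; and \eqref{g3} is then the special case $k=1$ of \eqref{g2} followed by one further right multiplication by $e_{1,1}$, since $e_{1,1}^4 = (e_{1,1}^2e_{1,1})\,e_{1,1}$.

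The reductions genuinely close up because of the special behaviour of $e_{1,1}$: the relations $\mathcal{G}_{13}$--$\mathcal{G}_{16}$ move a trailing or leading factor $e_{1,i}$ or $e_{i,1}$ past an adjacent $e_{1,1}$ at the price of a cubic and a linear term in $e_{1,1}$, the relations $\mathcal{G}_{17},\mathcal{G}_{18}$ collapse $e_{1,1}^3e_{1,i}$ and $e_{1,1}^3e_{i,1}$, and $\mathcal{G}_{19}$ (that is, $e_{1,1}^5=e_{1,1}$) forces the exponent of $e_{1,1}$ down into the range $\{0,1,2,3,4\}$; this is exactly what produces the combinations $\tfrac12(e_{1,1}^4+e_{1,1}^2)$ and $\tfrac12(e_{1,1}^4-e_{1,1}^2)$ on the right-hand sides. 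Whenever an intermediate product $e_{i',j'}e_{k',\ell'}$ has $j'\neq k'$ and $i'\neq\ell'$ it vanishes by $\mathcal{G}_3$, and more generally many of the length-three monomials that appear are killed by $\mathcal{G}_{11}$ or $\mathcal{G}_{12}$; keeping track of these collapses is precisely why the surviving terms all carry factors such as $\delta_{i,k}$ and $\delta_{j,1}$.

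As a consistency check one can apply the order-two anti-automorphism $\eta$ of Lemma \ref{auto}, which sends the left-hand side of \eqref{g1} to $e_{1,\ell}e_{1,1}e_{k,1}\cdot e_{j,i}$ and likewise converts \eqref{g2} and \eqref{g3} into right-multiplication products, so that each identity can be matched against its $\eta$-image once the corresponding right-multiplication formulas are available. The only real difficulty is organisational: one must track independently whether each of $i,j,k,\ell$ equals $1$ and also the coincidences $i=k$, $i=\ell$, $j=\ell$, $k=\ell$, carry out two or three successive reductions in every surviving branch, and then collect the coefficients of the (at most seven) basis monomials that can occur. No idea beyond associativity, Proposition \ref{sf} and Lemma \ref{G*} is needed; the obstacle is performing this bookkeeping without error and recognising that the numerous subcases recombine into the compact $\widehat{\delta}$-expressions displayed in \eqref{g1}--\eqref{g3}.
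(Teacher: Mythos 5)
Your proposal is correct and follows essentially the same route as the paper: reduce each product by associativity to the structure constants of Proposition \ref{sf} and then normalize with the relations of Lemma \ref{G*} (the paper likewise derives \eqref{g1} from \eqref{l2}/\eqref{l3} followed by a right multiplication by $e_{\ell,1}$ reduced via $\mathcal{G}_{18}$, $\mathcal{G}_{11}$, $\mathcal{G}_{13}$, $\mathcal{G}_3$, and obtains \eqref{g2}, \eqref{g3} by further right multiplications of $e_{i,j}e_{1,1}^2$). The only difference is an immaterial choice of bracketing — you peel off one generator at a time and pass through \eqref{l1}, \eqref{l22}, \eqref{l4}, whereas the paper groups the first two letters of the long factor and invokes \eqref{l2}/\eqref{l3} directly — and the case analysis and reductions you outline are exactly those the paper carries out.
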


\begin{proof}
For \eqref{g1}, let $\ell \neq 1$ and consider two cases.
Case I. If $k=1$ then \eqref{l2} of Proposition \ref{sf} implies
\begin{align}\label{l11}
e_{i,j} e_{1,1}^2 = \delta_{j,1} \left[ \delta_{i,1} e_{1,1}^3 + \widehat{\delta}_{i,1}(e_{1,1}^2 e_{i,1}+e_{i,1})\right]
& +  \widehat{\delta}_{j,1}\delta_{i,1} \left(e_{1,1}^2 e_{1,j} - e_{1,j}\right).
\end{align}
Multiply \eqref{l11} by $e_{\ell,1}$ and use the relations $\mathcal{G}^{(\ell)}_{18}$, $\mathcal{G}^{(i,1,\ell,1)}_{3}$, $\mathcal{G}^{(1,j,\ell)}_{11}$ and $\mathcal{G}^{(j)}_{13}$ and obtain
\begin{dmath*}
\left(e_{i,j} e_{1,1}^2\right) e_{\ell,1}=  - \delta_{j,1}\delta_{i,1} e_{1,1}e_{\ell,1}
 +  \widehat{\delta}_{j,1}\delta_{i,1} \left( \delta_{j,\ell} \tfrac{1}{2} \left(e_{1,1}^4 + e_{1,1}^2\right)- e_{1,j}e_{\ell,1}\right).
\end{dmath*}
Case II. If $k\neq 1$ then \eqref{l3} of Proposition \ref{sf} implies
\begin{dmath*}
 \left(e_{i,j} e_{1,k} e_{1,1}\right) e_{\ell ,1}
 = \delta_{j,1}\left[ \delta_{i,k} \left( \delta_{i,1} e_{1,1}^3 e_{\ell,1}
 + \widehat{\delta}_{i,1}\tfrac{1}{2}\left(e_{1,1}^3-e_{1,1}\right) e_{\ell,1}\right) \right]
 + \widehat{\delta}_{j,1}\delta_{i,k}  \left(e_{1,1}^2 e_{1,j} - e_{1,j}\right) e_{\ell,1}.
\end{dmath*}
Using the relations $\mathcal{G}^{(\ell)}_{18}$, $\mathcal{G}^{(1,j,\ell)}_{11}$ and $\mathcal{G}^{(j)}_{13}$ gives
\begin{dmath*}
\left(e_{i,j} e_{1,k} e_{1,1}\right) e_{\ell ,1}
 = \delta_{j,1}\left[ \delta_{i,k} \left( -\delta_{i,1} e_{1,1} e_{\ell,1}
 + \widehat{\delta}_{i,1}\tfrac{1}{2}(-e_{1,1}-e_{1,1}) e_{\ell,1}\right) \right]
 + \widehat{\delta}_{j,1}\delta_{i,k} \left[\delta_{j,\ell} \tfrac{1}{2}\left(e_{1,1}^4+e_{1,1}^2\right)-  e_{1,j} e_{\ell,1}\right]
=  -\delta_{j,1}\delta_{i,k} e_{1,1} e_{\ell,1}
 + \widehat{\delta}_{j,1}\delta_{i,k} \left[\delta_{j,\ell} \tfrac{1}{2}\left(e_{1,1}^4+e_{1,1}^2\right)-  e_{1,j} e_{\ell,1}\right].
\end{dmath*}
Combining the results of the two cases completes the proof of \eqref{g1}. For \eqref{g2}, we multiply \eqref{l11} by $e_{1,k}$ and use the relations $\mathcal{G}^{(k)}_{17}$, $\mathcal{G}^{(1,i,k)}_{12}$, $\mathcal{G}^{(i)}_{14}$, $\mathcal{G}^{(1,j,1)}_{11}$ and $\mathcal{G}^{(1,j,1,k)}_{3} $. The proof of \eqref{g3} is similar.
\end{proof}
The proofs of the next five Propositions are similar to the proofs of Propositions \ref{sf} and \ref{ss} and are omitted.
\begin{proposition}\label{st}
Let $i,j,k,\ell \in \Omega$. Then in $\mathfrak{A}$, we have
\begin{dgroup*}
\begin{dmath}
e_{i,1}e_{1,j}\cdot e_{k,1} e_{1,\ell} =  \delta_{j,k}  \delta_{\ell,1}\left(\delta_{\ell,j}  \delta_{i,1} e_{1,1}^4 +  \tfrac{1}{2}   \widehat{\delta}_{\ell,j}    \delta_{i,1} \left(e_{1,1}^4+e_{1,1}^2\right)
\right)
   + \tfrac{1}{2}\left[
 \delta_{j,k}\widehat{\delta}_{i,1} \delta_{i,\ell} \left( 2 \delta_{\ell,j} \widehat{\delta}_{\ell,1} + \widehat{\delta}_{\ell,j}\left(\delta_{j,1}
+ \widehat{\delta}_{j,1}\widehat{\delta}_{\ell,1}\right)\right)+ \widehat{\delta}_{j,k}\delta_{k,\ell}\left(\widehat{\delta}_{j,1}\widehat{\delta}_{i,1} \delta_{i,j}+ \delta_{j,1} \delta_{i,1}\right)\right]
 \left(e_{1,1}^4 - e_{1,1}^2\right) +\delta_{j,k} \left(\delta_{\ell,1} \widehat{\delta}_{i,1}+  \delta_{\ell,j} \widehat{\delta}_{\ell,1} + \widehat{\delta}_{\ell,j}\left(\delta_{j,1}
+ \widehat{\delta}_{j,1}\widehat{\delta}_{\ell,1}\right) \right) e_{i,1} e_{1,\ell},\label{g4}
 \end{dmath}
\begin{dmath}
 e_{i,1} e_{1,j} \cdot e_{1,k} e_{\ell,1} = \delta_{j,1} \widehat{\delta}_{k,1}\delta_{k,\ell}
 \tfrac{1}{2}\left(\delta_{i,1} \left(e_{1,1}^4 + e_{1,1}^2\right)+ 2\widehat{\delta}_{i,1}e_{i,1}e_{1,1}\right)
   -\delta_{k,1} \left(\delta_{j,1}\delta_{i,1} +\widehat{\delta}_{j,1} \widehat{\delta}_{\ell,1} \delta_{i,j}   \right)e_{1,1}e_{\ell,1};
 \quad {(k,\ell)\neq(1,1)},\label{g5}
 \end{dmath}
\begin{dmath}
 e_{i,1} e_{1,j} \cdot e_{1,k} e_{1,1} e_{\ell,1} = -\delta_{k,1}\left(\delta_{i,1}\delta_{j,1}+\widehat{\delta}_{j,1}\delta_{i,j}\right)e_{1,1}^2 e_{\ell,1};\quad {\ell\neq 1},\label{g55}
\end{dmath}
\begin{dmath}
e_{i,1} e_{1,j} \cdot e_{1,1}^2 e_{1,k}=   \delta_{j,1} \left[ \delta_{k,1} \left(\delta_{i,1}e_{1,1}+ \widehat{\delta}_{i,1}\left(e_{1,1}^2e_{i,1}+e_{i,1}\right)\right )
+\widehat{\delta}_{k,1} \left(\delta_{i,1} e_{1,1}^2 e_{1,k}
 +\widehat{\delta}_{i,1}  \left( e_{1,k} e_{1,1}e_{i,1}+e_{i,k}\right) \right)\right] -\widehat{\delta}_{j,1}\delta_{k,1} \delta_{i,j}\tfrac{1}{2} \left(e_{1,1}^3 - e_{1,1} \right),\label{g6}
\end{dmath}
\begin{dmath}
 e_{i,1} e_{1,j} \cdot e_{1,1}^4 =  \delta_{j,1} e_{i,1} e_{1,1} + \widehat{\delta}_{j,1} \delta_{j,i} \tfrac{1}{2} \left(e_{1,1}^2 - e_{1,1}^4 \right).\label{g7}
\end{dmath}
\end{dgroup*}
\end{proposition}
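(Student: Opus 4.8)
The plan is to repeat, with different starting data, the computation used for Propositions~\ref{sf} and~\ref{ss}. By Theorem~\ref{ba} the set $\mathfrak{G}$ of Lemma~\ref{G*} is a Gr\"obner basis of $I$, so every element of $\mathfrak{A}$ has a unique normal form on the monomial basis $\mathfrak{B}$, and any sequence of reductions by the relations $\mathcal{G}_0^{(\cdot)},\dots,\mathcal{G}_{19}$ terminates at that normal form, independently of the choices made along the way. Between them, Propositions~\ref{sf} and~\ref{ss} record the product of an arbitrary generator $e_{a,b}$ with an arbitrary element of $\mathfrak{B}$ (formulas \eqref{l1}--\eqref{l4} and \eqref{g1}--\eqref{g3}); applying the anti-automorphism $\eta$ of Lemma~\ref{auto} to these also yields every product (element of $\mathfrak{B}$)$\,\cdot\,e_{a,b}$. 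Consequently any product of two basis monomials can be rewritten into $\mathfrak{B}$ by peeling off one generator at a time from either end, and the whole proof consists of carrying this out for the five products in \eqref{g4}--\eqref{g7}, all of which have left factor a word of the form $e_{i,1}e_{1,j}$.

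Concretely, for \eqref{g4} I would use associativity to write $e_{i,1}e_{1,j}\cdot e_{k,1}e_{1,\ell}=e_{i,1}\cdot\bigl(e_{1,j}\cdot e_{k,1}e_{1,\ell}\bigr)$, expand the inner product by the $i=1$ instance of \eqref{l2} (a combination of $e_{1,1}^3$, $e_{1,1}^2e_{1,j}$, $e_{1,1}^2e_{1,\ell}$, $e_{1,1}$ and $e_{1,j}$ with $\delta$-coefficients in $j,k,\ell$), then left-multiply term by term by $e_{i,1}$ using \eqref{l1}, \eqref{g2}, \eqref{g3} together with the one-step reductions $\mathcal{G}_{13}^{(\cdot)},\mathcal{G}_{14}^{(\cdot)},\mathcal{G}_{17}^{(\cdot)},\mathcal{G}_{18}^{(\cdot)},\mathcal{G}_{19}$, and finally split on the flags $j=k$, $\ell=1$, $i=1$, $j=1$ and recollect the coefficients into the displayed closed form. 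Formula \eqref{g5} is obtained the same way, using \eqref{l3} in place of \eqref{l2} for the inner product $e_{1,j}\cdot e_{1,k}e_{\ell,1}$; and \eqref{g55}, \eqref{g6}, \eqref{g7}, whose right factors are $e_{1,k}e_{1,1}e_{\ell,1}$, $e_{1,1}^2e_{1,k}$ and $e_{1,1}^4$, follow by the identical peeling argument with \eqref{g1}, \eqref{g2}, \eqref{g3} playing the role of the inner expansion. Wherever a target product coincides with the $\eta$-image of one already evaluated, I would simply apply $\eta$ to both sides, exactly as \eqref{l22} was deduced from \eqref{l2}.

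I expect the only real obstacle to be bookkeeping. The restrictions ``$j\neq k$ or $t\neq r$'' etc.\ built into the generators of $I$, together with the many $\mathcal{G}$-relations whose applicability is gated by indices equalling $1$ or not, cause the computation to branch into a sizeable number of subcases; in each branch one must check that the rewriting has actually reached $\mathfrak{B}$ rather than some intermediate word still divisible by a leading monomial of $\mathfrak{G}$, and then reassemble the $\delta$-coefficients from all branches into a single compact expression. There is no conceptual difficulty---Theorem~\ref{ba} guarantees both termination and uniqueness---but the manipulations are long and error-prone, which is why, having carried out the representative cases \eqref{l1}--\eqref{l4} and \eqref{g1}--\eqref{g3} in detail, one is content to call the remaining propositions ``similar''.
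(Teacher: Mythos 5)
Your proposal is correct and matches the paper's (omitted) argument: the paper itself only says the proof is ``similar to the proofs of Propositions \ref{sf} and \ref{ss}'', i.e.\ exactly the peel-one-generator-off, reduce-modulo-$\mathfrak{G}$, and apply-$\eta$ routine you describe, with Theorem \ref{ba} guaranteeing well-definedness of the normal forms. The only caveat worth keeping in mind is the one you already flag implicitly: after applying $\eta$ the resulting words need not lie in $\mathfrak{B}$ and must be re-reduced (as the paper does for \eqref{l22} using $\mathcal{G}_{5}^{(j,1)}$, $\mathcal{G}_{5}^{(\ell,1)}$, $\mathcal{G}_{8}^{(i,1)}$), which is routine.
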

\begin{proposition}\label{LL}
Let $i,j,k,\ell \in \Omega$ and $(i,j)\neq (1,1)$. Then in $\mathfrak{A}$, we have
\begin{dgroup*}
\begin{dmath}\label{k1}
 e_{1,i}e_{j,1}\cdot e_{1,k} e_{\ell,1}
 =\tfrac{1}{2}\left[ \delta_{i,1} \delta_{j,k} \delta_{\ell,1}\widehat{\delta}_{j,1} \left(e_{1,1}^4 - e_{1,1}^2\right)
+\left\{ \delta_{j,k}\widehat{\delta}_{i,1} \delta_{i,\ell} \left( \delta_{j,1}
 + \widehat{\delta}_{j,1}\widehat{\delta}_{\ell ,1} \left(2\delta_{j,\ell}
+
\widehat{\delta}_{j,\ell}\right)\right) + \widehat{\delta}_{j,k}\widehat{\delta}_{\ell,1} \delta_{k,\ell}\left( \widehat{\delta}_{j,1} \widehat{\delta}_{j,\ell} \delta_{i,j}+\delta_{i,1}{\delta}_{j,1} \right)\right\}
\left(e_{1,1}^4 +e_{1,1}^2\right)\right]
-  {\delta_{j,k} \left(\delta_{j,1}+ \widehat{\delta}_{j,1}\left(\delta_{\ell,1}
 \widehat{\delta}_{i,1}
 + \widehat{\delta}_{\ell ,1}\right) \right)
e_{1,i} e_{\ell,1}
;\quad
 {(k,\ell)\neq (1,1)}},
\end{dmath}
\begin{dmath}
\label{k3}
e_{1,i} e_{j,1}  \cdot e_{1,1}^2 e_{1,k} = \left( - \delta_{j,1} \widehat{\delta}_{i,1} \delta_{k,1} +  \widehat{\delta}_{j,1}\delta_{i,j} \delta_{k,i}\right)e_{1,1}^2 e_{1,i}
+ \delta_{j,1}\delta_{k,1} \widehat{\delta}_{i,1}e_{1,i}
+\delta_{i,j} \widehat{\delta}_{j,1}\left( \tfrac{1}{2} \delta_{k,1}  \left(e_{1,1}^3 +e_{1,1}\right)
+  \widehat{\delta}_{k,i}\widehat{\delta}_{k,1}e_{1,1}^2 e_{1,k}\right),
\end{dmath}
\begin{dmath}
\label{k4}
e_{1,i} e_{j,1} \cdot e_{1,1}^4 = \delta_{j,1} e_{1,i} e_{1,1} + \widehat{\delta}_{j,1}  \widehat{\delta}_{i,1}\delta_{i,j}\tfrac{1}{2}  \left(e_{1,1}^4 + e_{1,1}^2 \right),
\end{dmath}
\begin{dmath}
\label{k2}
e_{1,i} e_{j,1} \cdot e_{1,k} e_{1,1} e_{\ell,1} = - \delta_{j,k} e_{1,i} e_{1,1} e_{\ell,1}; \quad{\ell\neq 1}.
\end{dmath}
\end{dgroup*}
\end{proposition}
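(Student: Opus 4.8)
The plan is to establish each of the four identities of Proposition~\ref{LL} by the same bootstrapping procedure used in the proofs of Propositions~\ref{sf} and~\ref{ss}: write the left-hand side as a product of four to six generators $e_{a,b}$ in $F\langle X\rangle$, pass to $\mathfrak{A}$, and rewrite it as a linear combination of the monomial basis $\mathfrak{B}$ of Theorem~\ref{ba} by successively eliminating leading monomials of the Gr\"obner basis $\mathfrak{G}$ of Lemma~\ref{G*}. Since $\mathfrak{G}$ is a Gr\"obner basis for $I$ by Theorem~\ref{ba}\eqref{poo}, Proposition~\ref{C(I)proposition} guarantees that the resulting expression in terms of $\mathfrak{B}$ is uniquely determined; hence it suffices to exhibit one reduction path for each identity and to organize the Kronecker-delta bookkeeping into cases.

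The key building block is formula~\eqref{l4} of Proposition~\ref{sf}, which, after relabeling, computes the product $e_{1,i}e_{j,1}\cdot e_{c,d}$ of the basis element $e_{1,i}e_{j,1}$ with a single generator $e_{c,d}$ (its side condition $(k,\ell)\neq(1,1)$ becomes $(j,i)\neq(1,1)$, which matches the hypothesis of Proposition~\ref{LL}). To prove~\eqref{k1} I would first use this to expand $e_{1,i}e_{j,1}\cdot e_{1,k}$ as a linear combination of basis monomials of length at most four, then multiply that combination on the right by $e_{\ell,1}$ term by term, invoking~\eqref{l1} and~\eqref{l22} of Proposition~\ref{sf} and the long-monomial formulas of Propositions~\ref{ss} and~\ref{st} for the individual terms, and finally collect coefficients. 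Identities~\eqref{k3} and~\eqref{k4} are treated the same way, building $e_{1,1}^2$ and $e_{1,1}^4$ one generator at a time from~\eqref{l4} and then reducing the surviving powers of $e_{1,1}$ via the relations $\mathcal{G}_{13}^{(i)},\dots,\mathcal{G}_{18}^{(i)}$ and $\mathcal{G}_{19}=e_{1,1}^5-e_{1,1}$. Identity~\eqref{k2}, whose right-hand side is a single monomial, is where the greatest cancellation occurs; it is driven chiefly by $\mathcal{G}_{11}^{(i,j,k)}=e_{i,1}e_{1,j}e_{k,1}$ and $\mathcal{G}_{12}^{(i,j,k)}=e_{1,i}e_{j,1}e_{1,k}$, which annihilate the \emph{index-mismatched} length-three monomials that appear along the way.

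The anti-automorphism $\eta$ of Lemma~\ref{auto} preserves the shape class $e_{1,a}e_{b,1}$ --- it sends $e_{1,i}e_{j,1}\cdot e_{1,k}e_{\ell,1}$ to $e_{1,\ell}e_{k,1}\cdot e_{1,j}e_{i,1}$ --- so it does not reduce Proposition~\ref{LL} to the earlier cases; rather it yields the symmetry $(i,j,k,\ell)\mapsto(\ell,k,j,i)$ of each formula, which I would use as an internal consistency check on the answer. The only real obstacle is the combinatorial bookkeeping: one must split on $i=1$ versus $i\neq1$, on $j=1$ versus $j\neq1$, and on the coincidences among $i,j,k,\ell$, and verify at each reduction step that the side condition of the relation being applied is satisfied. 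The case $n=2$ should be inspected separately, since several relations of $\mathfrak{G}$ degenerate there. There is no conceptual difficulty: termination of the reductions is guaranteed by the Gr\"obner-basis property via Theorem~\ref{di}, and uniqueness of normal forms means that any one successful path completes the proof.
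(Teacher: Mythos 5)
Your proposal is correct and follows essentially the route the paper intends: the paper omits the proof of Proposition~\ref{LL}, stating only that it is ``similar to the proofs of Propositions~\ref{sf} and~\ref{ss},'' and your plan --- peel off one generator at a time starting from \eqref{l4} of Proposition~\ref{sf}, reduce each resulting term modulo the Gr\"obner basis $\mathfrak{G}$ of Lemma~\ref{G*} (in particular $\mathcal{G}_{11}$, $\mathcal{G}_{12}$, $\mathcal{G}_{13}$--$\mathcal{G}_{19}$), and rely on uniqueness of normal forms via Theorem~\ref{ba}\eqref{poo} and Proposition~\ref{C(I)proposition} --- is exactly that method, with the correct observation that $\eta$ here gives only the internal symmetry $(i,j,k,\ell)\mapsto(\ell,k,j,i)$ rather than a reduction to an earlier case. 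The separate inspection of $n=2$ is unnecessary once $\mathfrak{G}$ is known to be a Gr\"obner basis, but it is harmless.
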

\begin{proposition}\label{466}
Let $i,j,k\in \Omega$. Then in $\mathfrak{A}$, we have
\begin{dgroup*}
\begin{dmath}
\label{De2}
 e_{1,1}^2 e_{1,k} \cdot e_{i,j}
  = \delta_{k,i} \left[\delta_{j,1} \left( \left(\delta_{k,1} +\tfrac{1}{2}\widehat{\delta}_{k,1}\right) e_{1,1}^4
  + \tfrac{1}{2}\widehat{\delta}_{k,1}  e_{1,1}^2 \right)
+\widehat{\delta}_{j,1}  e_{1,1} e_{1,j} \right]
 - \widehat{\delta}_{k,i}\delta_{j,1} \delta_{k,1} e_{1,1} e_{i,1},
\end{dmath}
\begin{dmath}
\label{h2}
e_{1,1}^2 e_{1,k} \cdot e_{i,1} e_{1,j}  = \left[\delta_{k,i}\delta_{j,1}\left(\delta_{k,1}+\tfrac{1}{2}\widehat{\delta}_{k,1}\right)+\tfrac{1}{2}\widehat{\delta}_{k,i} \delta_{k,1} \widehat{\delta}_{i,1} \delta_{i,j}\right]e_{1,1}
 +\tfrac{1}{2} \left(   \delta_{k,i}\widehat{\delta}_{k,1} \delta_{j,1} -  \widehat{\delta}_{k,i}\delta_{k,1}\widehat{\delta}_{i,1}\delta_{i,j}   \right)e_{1,1}^3
+ \left( \delta_{k,1} \left( \delta_{k,i} \widehat{\delta}_{j,1} -\widehat{\delta}_{k,i}  \delta_{i,1} \right) + \widehat{\delta}_{k,1}\widehat{\delta}_{j,1}\right)e_{1,1}^2e_{1,j},
\end{dmath}
\begin{dmath}
\label{h1}
e_{1,1}^2 e_{1,k} \cdot e_{1,i} e_{j,1}  = \delta_{k,1}\left(- \delta_{i,1} e_{1,1}^2 e_{j,1}+ \widehat{\delta}_{i,1}\delta_{i,j} \tfrac{1}{2}\left(e_{1,1}^3+e_{1,1}\right) \right);\, {(i,j)\neq(1,1)},
\end{dmath}
\begin{dmath}
\label{De3}
e_{1,1}^2 e_{1,k}\cdot e_{1,j} e_{1,1} e_{\ell ,1} = \delta_{k,1}\delta_{j,1}  e_{1,1} e_{\ell,1}; \quad {\ell\neq 1},
\end{dmath}
\begin{dmath}
\label{h3}
e_{1,1}^2 e_{1,k}\cdot e_{1,1}^2 e_{1,j}= \delta_{k,1} e_{1,1}e_{1,j},
\end{dmath}
\begin{dmath}
\label{De4}
 e_{1,1}^2 e_{1,j} \cdot e_{1,1}^4  = \delta_{j,1} e_{1,1}^3.
 \end{dmath}
 \end{dgroup*}
\end{proposition}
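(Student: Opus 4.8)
The plan is to prove the six identities \eqref{De2}--\eqref{De4} by the same reduction procedure used for Propositions \ref{sf} and \ref{ss}: read each left-hand side as a word in the generators $e_{i,j}$ and rewrite it into normal form modulo the Gr\"obner basis $\mathfrak{G}$ of Lemma \ref{G*}, so that it is expressed in the monomial basis $\mathfrak{B}$ of $\mathfrak{A}$ (Theorem \ref{ba}). Since the common first factor $e_{1,1}^2e_{1,k}$ is itself a basis monomial, I would regroup each product as $e_{1,1}^2\cdot\bigl(e_{1,k}\cdot M\bigr)$, where $M$ ranges over the six families of second factors $e_{i,j}$, $e_{i,1}e_{1,j}$, $e_{1,i}e_{j,1}$, $e_{1,j}e_{1,1}e_{\ell,1}$, $e_{1,1}^2e_{1,j}$, $e_{1,1}^4$.

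The inner product $e_{1,k}\cdot M$ is already available, as an explicit linear combination of basis monomials, from Propositions \ref{sf} and \ref{ss}, since its first factor $e_{1,k}$ has the shape $e_{a,b}$ covered there. It then remains to left-multiply that combination by $e_{1,1}^2$, which I would do either by applying the formulas of Propositions \ref{sf} and \ref{ss} twice more (multiplying by $e_{1,1}$ and then again by $e_{1,1}$, each time reducing the result to the basis), or directly by the ``$e_{1,1}$-heavy'' relations of $\mathfrak{G}$: the relations $\mathcal{G}_{13}^{(i)}$--$\mathcal{G}_{16}^{(i)}$ convert the triple products $e_{1,1}e_{1,i}e_{i,1}$ and their companions into polynomials in $e_{1,1}$; $\mathcal{G}_{17}^{(i)}$, $\mathcal{G}_{18}^{(i)}$, $\mathcal{G}_{19}$ reduce the higher powers of $e_{1,1}$ (so that, for instance, $e_{1,1}^4e_{1,i}=e_{1,1}^2e_{1,i}$ and $e_{1,1}^5=e_{1,1}$); and $\mathcal{G}_{3}^{(i,j,k,\ell)}$, $\mathcal{G}_{11}^{(i,j,k)}$, $\mathcal{G}_{12}^{(i,j,k)}$ annihilate the many products and triple products with no normal-form representative. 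Collecting the surviving terms yields a short combination of monomials of $\mathfrak{B}$, and the cascade of vanishing relations accounts for the Kronecker-delta prefactors, e.g.\ the $\delta_{k,1}$ in \eqref{h1} and the $\delta_{k,1}\delta_{j,1}$ in \eqref{De3}, which record that these products collapse to zero unless the relevant indices equal $1$. As an internal consistency check one can apply the anti-automorphism $\eta$ of Lemma \ref{auto}, under which $e_{1,1}^2e_{1,k}\mapsto e_{k,1}e_{1,1}^2$ and each second factor maps to a related basis word; this relates several of the identities to one another and to the companion formulas of Propositions \ref{sf}--\ref{LL}.

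The only genuine difficulty is the combinatorial bookkeeping: one must split into cases according to which of $i,j,k,\ell$ equal $1$, propagate the scalar coefficients through the chain of reductions, and finally repackage them into the compact $\delta/\widehat{\delta}$ form displayed in \eqref{De2}--\eqref{De4}. No new idea enters --- every step is a single rewrite by a relation of $\mathfrak{G}$ or an instance of Propositions \ref{sf} and \ref{ss} --- so, in keeping with the remark preceding Proposition \ref{st}, I would exhibit only one or two representative cases (say \eqref{De2} and \eqref{h3}) in full and leave the remaining routine verifications to the reader.
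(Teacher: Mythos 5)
Your proposal is correct and matches what the paper intends: the paper omits the proof of Proposition \ref{466}, stating only that it is ``similar to the proofs of Propositions \ref{sf} and \ref{ss},'' i.e.\ a routine reduction of each product to the monomial basis $\mathfrak{B}$ modulo the Gr\"obner basis $\mathfrak{G}$, reusing the previously established structure constants. Your regrouping as $e_{1,1}^2\cdot(e_{1,k}\cdot M)$ rather than the left-to-right association used in Proposition \ref{ss} is an immaterial variation, since confluence of $\mathfrak{G}$ guarantees the same normal form.
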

\begin{proposition}\label{47}
Let $i,j,k,\ell \in \Omega$ and $\ell\neq 1$. Then in $\mathfrak{A}$, we have
\begin{dgroup*}
\begin{dmath}
\label{ss1}
 e_{1,k} e_{1,1} e_{\ell,1} \cdot e_{i,j}= \delta_{j,\ell}\left[\tfrac{1}{2}\left\{ \left(\delta_{k,1}\widehat{\delta}_{j,1}\delta_{i,1} + \widehat{\delta}_{i,1}\delta_{i,k}\right) e_{1,1}^4 + \left(\widehat{\delta}_{i,1}\delta_{i,k}- \delta_{k,1}\widehat{\delta}_{j,1}\delta_{i,1} \right)e_{1,1}^2\right\}
 - \widehat{\delta}_{i,1} \delta_{k,1} e_{1,1} e_{i,1} - \widehat{\delta}_{k,1}e_{1,k} e_{i,1}\right],
\end{dmath}
\begin{dmath}
\label{ss2}
e_{1,k} e_{1,1} e_{\ell,1} \cdot e_{1,i} e_{j,1} = \delta_{i,\ell}\left[\delta_{k,1}\left(\delta_{j,1}\tfrac{1}{2}\left(e_{1,1}- e_{1,1}^3 \right) -\widehat{\delta}_{j,1}e_{1,1}^2 e_{j,1}\right)- \widehat{\delta}_{k,1}\left(\widehat{\delta}_{j,1} e_{1,k} e_{1,1} e_{j,1}+ \delta_{j,1} \left(e_{1,1}^2 e_{1,k}-e_{1,k}\right) \right)\right];\\\quad {(i,j)\neq (1,1)},
\end{dmath}
\begin{dmath}
\label{mm}
e_{1,i} e_{1,1} e_{j,1}\cdot e_{1,k} e_{1,1} e_{\ell,1}=\delta_{j,k}\left( e_{1,i} e_{\ell,1}- \widehat{\delta}_{i,1} \delta_{i,\ell} \tfrac{1}{2}\left( e_{1,1}^4 + e_{1,1}^2\right) \right);\quad {j\neq 1},
\end{dmath}
\begin{dmath}
\label{De1}
e_{1,k} e_{1,1} e_{\ell,1} \cdot e_{i,1} e_{1,j} = 0, \quad
  {e_{1,k} e_{1,1} e_{\ell,1}\cdot e_{1,1}^2 e_{1,j} = 0, \quad
   e_{1,k} e_{1,1} e_{\ell,1} \cdot e_{1,1}^4 = 0}.
    \end{dmath}
 \end{dgroup*}
\end{proposition}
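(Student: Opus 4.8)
The plan is to verify each of \eqref{ss1}, \eqref{ss2}, \eqref{mm} and \eqref{De1} by reducing the left-hand side to the monomial basis $\mathfrak{B}$ of Theorem~\ref{ba}. Since $\mathfrak{G}$ is a Gr\"obner basis for $I$, every element of $\mathfrak{A}$ has a unique normal form supported on $\mathfrak{B}$, so each identity is settled once both sides are rewritten until no leading monomial of $\mathfrak{G}$ survives; the hypothesis $\ell\neq 1$ is precisely what makes the left factor $e_{1,k}e_{1,1}e_{\ell,1}$ a (type-$7$) basis monomial. I would peel the word $e_{1,k}e_{1,1}e_{\ell,1}$ off one letter at a time and invoke associativity, so that each product collapses into a product of two shorter expressions whose normal forms are already recorded in Propositions~\ref{sf}--\ref{466}: for instance $e_{1,k}e_{1,1}e_{\ell,1}\cdot e_{i,j} = e_{1,k}e_{1,1}\bigl(e_{\ell,1}e_{i,j}\bigr)$, where the inner product is evaluated by \eqref{l1} and the result is then left-multiplied by $e_{1,k}e_{1,1}$ using \eqref{l3}--\eqref{l4} and, when cubic powers of $e_{1,1}$ appear, the relations $\mathcal{G}_{13}^{(\bullet)},\dots,\mathcal{G}_{19}$. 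The three identities of \eqref{De1} are the cheapest: after the first rewrite every surviving monomial contains a factor $e_{a,b}e_{c,d}$ with $a\neq d$ and $b\neq c$, which vanishes by $\mathcal{G}_{3}^{(\bullet)}$.

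Throughout, the computation branches on which of $1,i,j,k,\ell$ coincide, and these cases produce the Kronecker coefficients $\delta$ and $\widehat\delta$. The relations of Lemma~\ref{G*} must be applied in order of decreasing effect: first the quadratic relations $\mathcal{G}_{1}^{(\bullet)},\mathcal{G}_{2}^{(\bullet)},\mathcal{G}_{3}^{(\bullet)}$, which annihilate or shorten most terms; then the cubic relations $\mathcal{G}_{0}^{(\bullet)}$ and $\mathcal{G}_{5}^{(\bullet)},\dots,\mathcal{G}_{16}^{(\bullet)}$; finally the length-reducing relations $\mathcal{G}_{17}^{(\bullet)},\mathcal{G}_{18}^{(\bullet)},\mathcal{G}_{19}$. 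One must also keep in mind that $\mathfrak{B}$ changes shape near the index $1$ — for example $e_{1,k}e_{i,1}$ is a basis monomial for $k\neq 1$ but for $k=1$ must be read as $e_{1,1}e_{i,1}$ or $e_{1,1}^2$, while $e_{1,\ell}e_{1,1}^2$ reduces via $\mathcal{G}_{8}^{(\ell,1)}$ to $e_{1,1}^2 e_{1,\ell}-e_{1,\ell}$ — so each branch ends with a check that the fully reduced expression lies in $\mathfrak{B}$.

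A considerable shortcut, which I would use to cut the work roughly in half, is the order-$2$ anti-automorphism $\eta$ of Lemma~\ref{auto}. Since $\eta(e_{1,k}e_{1,1}e_{\ell,1}) = e_{1,\ell}e_{1,1}e_{k,1}$ and $\eta(uv)=\eta(v)\eta(u)$, every product here is the $\eta$-image, up to a relabelling of indices, of one already computed: \eqref{ss1} corresponds to \eqref{g1}, \eqref{ss2} to \eqref{k2}, the vanishing statements \eqref{De1} are $\eta$-duals of restrictions of identities already established (e.g.\ \eqref{g55} and \eqref{De3}), and \eqref{mm} is $\eta$-self-dual and so doubles as an internal consistency check. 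Applying $\eta$ to the known identity and rewriting the $\eta$-image of each right-hand basis monomial back into $\mathfrak{B}$ yields the stated formula; the only delicate point is that when $k=1$ the monomial $e_{1,\ell}e_{1,1}e_{k,1}=e_{1,\ell}e_{1,1}^2$ is not itself in $\mathfrak{B}$, so that case must first be pushed through $\mathcal{G}_{8}^{(\ell,1)}$ (and then matches a different member of the earlier list, such as \eqref{g2}).

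The main obstacle is not any single manipulation — each is forced by the Gr\"obner basis $\mathfrak{G}$ — but the sheer volume of case analysis: for each of the four identities one tracks about a dozen $\delta$-branches, carries every branch all the way down to $\mathfrak{B}$, and then collects the coefficients into the compact $\delta$--$\widehat\delta$ shape displayed. Because this is entirely mechanical once Propositions~\ref{sf} and \ref{ss} are in hand, the details are suppressed, as announced before the statement.
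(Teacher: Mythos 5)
Your plan is correct and coincides with what the paper itself does: the proof of this proposition is explicitly omitted as ``similar to the proofs of Propositions \ref{sf} and \ref{ss},'' i.e.\ peel off letters, reduce to the basis $\mathfrak{B}$ via the Gr\"obner basis $\mathfrak{G}$, and exploit the anti-automorphism $\eta$ of Lemma \ref{auto} (so that \eqref{ss1} and \eqref{ss2} are, away from the $k=1$ branch you correctly flag, the $\eta$-images of \eqref{g1} and \eqref{k2}). The only slight imprecision is attributing the vanishing in \eqref{De1} to $\mathcal{G}_{3}^{(\bullet)}$ alone: for the branches where the junction is $e_{\ell,1}e_{1,1}$ the relevant killer is the monomial relation $\mathcal{G}_{12}^{(1,\ell,1)}=e_{1,1}e_{\ell,1}e_{1,1}$ (equivalently, $e_{1,1}e_{\ell,1}e_{1,1}=0$), but this does not affect the soundness of the argument.
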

\begin{proposition}\label{last}
Let $i,j \in \Omega$. Then in $\mathfrak{A}$, we have
\begin{dgroup*}
\begin{dmath}
\label{De6}
e_{1,1}^4 \cdot e_{i,j} = \delta_{i,1} \left(\delta_{j,1} e_{1,1} + \widehat{\delta}_{j,1} e_{1,1}^2 e_{1,j} \right)- \widehat{\delta}_{i,1} \delta_{j,1} e_{1,1}^2 e_{j,1},
\end{dmath}
\begin{dmath}
e_{1,1}^4 \cdot e_{1,i} e_{j,1} = \delta_{i,1} e_{1,1} e_{j,1} + \widehat{\delta}_{i,1} \widehat{\delta}_{j,1}\delta_{i,j} \tfrac{1}{2}\left(e_{1,1}^4 + e_{1,1}^2\right);\quad {(i,j)\neq (1,1)}, \label{k10}
\end{dmath}
\begin{dmath}
\label{De7}
e_{1,1}^4 \cdot e_{i,1} e_{1,j} = \delta_{i,1} e_{1,1} e_{1,j} + \widehat{\delta}_{i,1}\delta_{i,j} \tfrac{1}{2}\left(e_{1,1}^2 - e_{1,1}^4\right),
\end{dmath}
\begin{dmath}
\label{De8}
e_{1,1}^4 \cdot e_{1,1}^2 e_{1,j}  = e_{1,1}^2 e_{1,j},
\end{dmath}
\begin{dmath}
\label{De9}
e_{1,1}^4 \cdot e_{1,i} e_{1,1} e_{j,1} = \delta_{i,1} e_{1,1}^2 e_{j,1}; \quad {j\neq 1},
\end{dmath}
\begin{dmath}
\label{De5}
e_{1,1}^4 \cdot e_{1,1}^4  = e_{1,1}^4.
\end{dmath}
\end{dgroup*}
\end{proposition}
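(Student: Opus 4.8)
The plan is to imitate the bookkeeping used in the proofs of Propositions~\ref{sf} and~\ref{ss}: for each monomial $b$ of the basis $\mathfrak{B}$ of Theorem~\ref{ba} I would rewrite the product $e_{1,1}^4\cdot b$ as an $F$-linear combination of monomials of $\mathfrak{B}$, using associativity together with the relations of the Gr\"obner basis $\mathfrak{G}$ of Lemma~\ref{G*}. The one structural fact driving everything is the relation $\mathcal{G}_{19}=e_{1,1}^5-e_{1,1}\in\mathfrak{G}$, i.e.\ $e_{1,1}^5=e_{1,1}$ in $\mathfrak{A}$; hence $e_{1,1}^6=e_{1,1}^2$, $e_{1,1}^7=e_{1,1}^3$, $e_{1,1}^8=e_{1,1}^4$, so that $e_{1,1}^4$ is idempotent. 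This settles \eqref{De8} and \eqref{De5} at once ($e_{1,1}^4\cdot e_{1,1}^2e_{1,j}=e_{1,1}^6e_{1,j}=e_{1,1}^2e_{1,j}$ and $e_{1,1}^4\cdot e_{1,1}^4=e_{1,1}^8=e_{1,1}^4$), while $e_{1,1}^4\cdot 1=e_{1,1}^4$ is trivial.

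For the remaining identities I would use associativity to write $e_{1,1}^4=e_{1,1}\cdot e_{1,1}^3=e_{1,1}\cdot\bigl(e_{1,1}^2e_{1,1}\bigr)$, so that $e_{1,1}^4\cdot b=e_{1,1}\cdot(e_{1,1}^3\cdot b)$. The inner factor $e_{1,1}^3\cdot b$ is precisely the specialisation $k=1$ of the appropriate identity of Proposition~\ref{466} (since $e_{1,1}^2e_{1,k}$ becomes $e_{1,1}^3$ at $k=1$): \eqref{De2} covers $b=e_{i,j}$, \eqref{h2} covers $b=e_{i,1}e_{1,j}$, \eqref{h1} covers $b=e_{1,i}e_{j,1}$, and \eqref{De3} covers $b=e_{1,i}e_{1,1}e_{j,1}$. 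In each case $e_{1,1}^3\cdot b$ comes out as a short sum of monomials of the forms $e_{1,1}$, $e_{1,1}^3$, $e_{1,1}^2e_{1,j}$, $e_{1,1}e_{1,j}$, $e_{1,1}e_{j,1}$, $e_{1,1}^2e_{j,1}$; multiplying each of these on the left by $e_{1,1}$ is covered by Propositions~\ref{sf} and~\ref{ss} with $i=j=1$, and the reductions $\mathcal{G}_{17}$ (giving $e_{1,1}^3e_{1,j}=e_{1,1}e_{1,j}$), $\mathcal{G}_{18}$ (giving $e_{1,1}^3e_{j,1}=-e_{1,1}e_{j,1}$) and $\mathcal{G}_{19}$ are all that is needed to return to normal form. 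Collecting terms and combining the Kronecker deltas then yields \eqref{De6}, \eqref{k10}, \eqref{De7} and \eqref{De9}.

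As an independent derivation and a cross-check I would also run each identity through the anti-automorphism $\eta$ of Lemma~\ref{auto}: since $\eta(e_{1,1}^4)=e_{1,1}^4$ and $\eta$ carries $\mathfrak{B}$ onto itself up to the already-established reductions, $e_{1,1}^4\cdot b=\eta\bigl(\eta(b)\cdot e_{1,1}^4\bigr)$, which reduces the four identities above to the ``right multiplication by $e_{1,1}^4$'' formulas \eqref{g3}, \eqref{g7}, \eqref{k4} and the last part of \eqref{De1} from Propositions~\ref{sf}, \ref{st}, \ref{LL} and~\ref{47} (with an easy separate treatment of the degenerate sub-cases in which an index equals $1$), followed by a short normal-form rewriting of the $\eta$-image via $\mathcal{G}_5$, $\mathcal{G}_8$, $\mathcal{G}_{17}$ and $\mathcal{G}_{18}$. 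I do not expect any conceptual obstacle here---finite-dimensionality and the Gr\"obner basis $\mathfrak{G}$ are already in hand by Theorem~\ref{ba}---so the real work, and the place where I would be most careful, is the disciplined case analysis on which of the free indices (and the auxiliary summation index) equal $1$ and the systematic reduction of every intermediate monomial to the normal form dictated by Lemma~\ref{G*}; having the two independent routes above serve as a mutual consistency check is what I would rely on to catch index and sign slips.
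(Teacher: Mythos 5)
Your proposal is correct and takes essentially the same route the paper intends: the paper omits this proof as ``similar to the proofs of Propositions~\ref{sf} and~\ref{ss}'', i.e.\ direct reduction modulo the Gr\"obner basis $\mathfrak{G}$, and your factorization $e_{1,1}^4\cdot b=e_{1,1}\cdot\bigl(e_{1,1}^3\cdot b\bigr)$ via the $k=1$ case of Proposition~\ref{466}, together with $e_{1,1}^5=e_{1,1}$, is just an efficient organization of that same computation. One remark: carrying your computation out yields $-\widehat{\delta}_{i,1}\delta_{j,1}\,e_{1,1}^2e_{i,1}$ as the last term of \eqref{De6} (consistent with $\mathcal{G}_{18}^{(i)}$ and with how \eqref{De6} is later applied in the decomposition theorem), so the printed $e_{1,1}^2e_{j,1}$ there is a typo, since the factor $\delta_{j,1}$ would collapse it to $-e_{1,1}^3$.
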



\section{The center of the universal enveloping algebra $\mathfrak{A}$}

Our next aim is to use the results of Section \ref{stc} to determine the center of $\mathfrak{A}$:
  \[
  Z(\mathfrak{A})= \{z \in \mathfrak{A}\,|\, zu = uz, \; \text{for all}\,\, u \in  \mathfrak{A}\}.
  \]

\begin{theorem}
The center $ Z(\mathfrak{A})$ of the (unital) universal enveloping algebra $\mathfrak{A}$ has dimension $5$ with basis:
\begin{align*}
&z_1 =\tfrac{(n-2)}{n} e_{1,1}^2- \tfrac{2}{n} \sum^n_{i=2} e_{1,i} e_{i,1} + e_{1,1}^4,\quad
 z_{2} = {\tiny{(2 -n)}} e_{1,1}^2 +\sum^n_{i=2} e_{1,i} e_{i,1} + \sum^n_{i=2} e_{i,1} e_{1,i},
\\
& z_{3} = -\tfrac{1}{2}e_{1,1} + \tfrac{1}{2} e_{1,1}^3 + \sum^n_{i=2} e_{1,i} e_{1,1} e_{i,1},\quad
 z_{4} = \sum^n_{i=1} e_{i,i},\quad  z_{5} = 1.
\end{align*}
\end{theorem}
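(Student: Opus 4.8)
The plan is to avoid grinding through the $(4n^2+1)$-dimensional linear system directly, and instead first identify the ring structure of $\mathfrak{A}$. By Lemma~\ref{Gh} the four degree-$n$ representations $\rho_1,\dots,\rho_4$ together with the trivial representation $\rho_5$ of degree $1$ are pairwise inequivalent and irreducible, so the combined homomorphism $\rho=\rho_1\oplus\cdots\oplus\rho_5\colon\mathfrak{A}\to M_n(F)^{\oplus 4}\oplus F$ is surjective (Jacobson density for pairwise inequivalent irreducibles), and since its target has dimension $4n^2+1=\dim\mathfrak{A}$ by Theorem~\ref{ba}, the map $\rho$ is an isomorphism. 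Hence $\mathfrak{A}\cong M_n(F)^{\oplus 4}\oplus F$ is semisimple and $\dim Z(\mathfrak{A})=5$ (one dimension per simple block). Moreover, since the $\rho_k$ are, up to equivalence, the projections onto the simple blocks, an element $z\in\mathfrak{A}$ lies in $Z(\mathfrak{A})$ \emph{if and only if} $\rho_k(z)$ is a scalar matrix for every $k=1,\dots,5$; here I use $\rho_k(e_{i,j})=\rho_k(E_{i,j})$, since $i(E_{i,j})=e_{i,j}$ in $\mathfrak{A}$.

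With this criterion in hand it remains to check that $z_1,\dots,z_5$ are central and linearly independent, and both follow from one bookkeeping computation. Using the explicit formulas $\rho_1(E_{i,j})=E_{i,j}$, $\rho_2(E_{i,j})=-E_{i,j}$, $\rho_3(E_{i,j})=\mathrm{I}\,E_{j,i}$, $\rho_4(E_{i,j})=-\mathrm{I}\,E_{j,i}$, $\rho_5(E_{i,j})=0$, ordinary matrix multiplication, and $\sum_{i=2}^n E_{i,i}=I_n-E_{1,1}$, I would evaluate $\rho_k(z_m)$ for all $k,m$ and find that it is always a scalar $\mu_{k,m}$ times the identity; for instance $\rho_1(z_2)=I_n$, $\rho_3(z_1)=\tfrac{2}{n}I_n$, $\rho_3(z_3)=-\mathrm{I}\,I_n$, and $\rho_5(z_m)=0$ for $m\le4$ while $\rho_5(z_5)=1$. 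This already shows each $z_m\in Z(\mathfrak{A})$. Then one checks that the $5\times5$ scalar matrix $(\mu_{k,m})$ is invertible; with rows indexed by $z_1,\dots,z_5$ and columns by $\rho_1,\dots,\rho_5$ its determinant works out to $8\mathrm{I}/n$, which is nonzero for every $n\ge2$, so $z_1,\dots,z_5$ are linearly independent. Since $\dim Z(\mathfrak{A})=5$, they form a basis, which is the assertion.

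There is no genuine obstacle here beyond organizing the twenty-five small matrix products in the second step, and even that can be trimmed using the $\mathbb{Z}/2$-grading of $\mathfrak{A}$ (all relations in $G$ are homogeneous when $\deg e_{i,j}=1$), under which $z_1,z_2,z_5$ are even and $z_3,z_4$ are odd, so the even and odd parts of $Z(\mathfrak{A})$ may be handled separately. A self-contained alternative, in the spirit of Section~\ref{stc}, is to write a general $z=\sum_{b\in\mathfrak{B}}\lambda_b\,b$ in the monomial basis $\mathfrak{B}$ of Theorem~\ref{ba} and impose $e_{k,\ell}\,z=z\,e_{k,\ell}$ for all $k,\ell\in\Omega$ using the structure constants of Propositions~\ref{sf}--\ref{last}; here the anti-automorphism $\eta$ of Lemma~\ref{auto} halves the work (commuting with $e_{k,\ell}$ is equivalent to commuting with $e_{\ell,k}$, and $\eta$ fixes each $z_m$), but solving and organizing the resulting homogeneous linear system in the $\lambda_b$ is the real labor of that route, which is why I would prefer the representation-theoretic argument above.
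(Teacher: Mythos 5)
Your proposal is correct, and it takes a genuinely different route from the paper. The paper's proof is a direct computation: it writes a general element of $\mathfrak{A}$ in the monomial basis $\mathfrak{B}$ of Theorem \ref{ba}, imposes commutation with $e_{1,1}$, $e_{q,q}$, $e_{q,s}$ and $e_{q,1}$ using the structure constants of Propositions \ref{sf}--\ref{last}, and solves the resulting linear system to find a four-parameter family (plus the unit). You instead first establish the Wedderburn decomposition $\mathfrak{A}\cong M_n(F)^{\oplus 4}\oplus F$ from Lemma \ref{Gh} and the dimension count of Theorem \ref{ba} via the density theorem --- a result the paper only proves later, in Section 6, by a long explicit construction of matrix units --- and then read off $\dim Z(\mathfrak{A})=5$ and verify centrality of the $z_m$ by checking that each $\rho_k(z_m)$ is scalar. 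I verified your sample values ($\rho_1(z_2)=I_n$, $\rho_3(z_1)=\tfrac{2}{n}I_n$, $\rho_3(z_3)=-\mathrm{I}\,I_n$) and the determinant $8\mathrm{I}/n$ of the matrix $(\mu_{k,m})$; all are right, and the identification $\rho_k(e_{i,j})=\rho_k(E_{i,j})$ is justified by the universal property of $\mathfrak{A}$. Your approach is shorter and conceptually cleaner (it yields the dimension for free and explains \emph{why} the center is five-dimensional: one block per irreducible), at the cost of front-loading the semisimplicity argument and of only \emph{verifying} the given basis rather than deriving it; the paper's computation, by contrast, stays entirely within the Gr\"obner-basis/structure-constant framework, produces the central elements constructively, and does not presuppose any representation theory. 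There is no circularity in your argument, since Lemma \ref{Gh} and Theorem \ref{ba} both precede the center theorem.
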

\begin{proof}
To get the center of $\mathfrak{A}$, it is sufficient to determine the elements of $\mathfrak{A}$ which commute with $e_{i,j}$, for all $i,j \in \Omega$.  Let
\begin{dmath*}
x  =  \sum^n_{i, j = 1} \zeta^{(i,j)}_{1} e_{i,j} + \sum^n_{i = 1} \sum^n_{j = 2}\zeta^{(i,j)}_{2}e_{1,i} e_{1,1} e_{j,1} + \sum^n_{j = 1} \zeta^{(j)}_{3}e_{1,1}^2 e_{1,j}
 +  \sum^n_{i,j = 1} \zeta^{(i,j)}_{4}e_{i,1} e_{1,j} + \!\!\!\!\!\!\!\!\!\!\!\sum^n_{\tiny \begin{array}{c} i,j =1 \\ (i,j)\neq (1,1) \end{array}}\!\!\!\!\!\!\!\!\!\!\!\zeta^{(i,j)}_{5} e_{1,i} e_{j,1} + \zeta e_{1,1}^4,
\end{dmath*}
be any element of $Z(\mathfrak{A})$. Then
\begin{dmath}\label{e11}
 0 = x\, e_{1,1} - e_{1,1}\, x
=   \sum^n_{i, j = 1} \zeta^{(i,j)}_{1} \left(e_{i,j}e_{1,1} - e_{1,1} e_{i,j}\right) + \sum^n_{i = 1} \sum^n_{j = 2}\zeta^{(i,j)}_{2}\left( e_{1,i} e_{1,1} e_{j,1}  e_{1,1} - e_{1,1} e_{1,i} e_{1,1} e_{j,1} \right) + \sum^n_{j = 1} \zeta^{(j)}_{3} \left( e_{1,1}^2 e_{1,j}e_{1,1} - e_{1,1}^3 e_{1,j}\right)
+  \sum^n_{i,j = 1} \zeta^{(i,j)}_{4}\left(e_{i,1} e_{1,j}e_{1,1} - e_{1,1} e_{i,1} e_{1,j}\right)  + \!\!\!\!\!\!\!\!\!\!\!\sum^n_{\tiny \begin{array}{c} i,j =1 \\ (i,j)\neq (1,1) \end{array}}\!\!\!\!\!\!\!\!\!\!\!\zeta^{(i,j)}_{5} \left(e_{1,i} e_{j,1} e_{1,1} - e_{1,1} e_{1,i} e_{j,1}\right).
\end{dmath}
Proposition \ref{sf} implies that  $e_{i,1} e_{1,j} e_{1,1} = 0 = e_{1,i} e_{j,1} e_{1,1}$ for  $i\neq j \neq 1$,
 $e_{1,1} e_{i,1} e_{1,j}=0 = e_{1,1} e_{1,i} e_{j,1}$ for $1\neq i \neq j$,
 $e_{i,1} e_{1,i} e_{1,1}= \tfrac{1}{2} \left(e_{1,1}^3 - e_{1,1}\right) = e_{1,1} e_{i,1} e_{1,i}$ and
 $ e_{1,i} e_{i,1} e_{1,1}= \tfrac{1}{2} \left(e_{1,1}^3 + e_{1,1}\right)= e_{1,1} e_{1,i} e_{i,1}$ for $i\neq 1$. Using this in \eqref{e11} gives
\begin{dmath*}
 0 =  \sum^n_{i, j = 1} \zeta^{(i,j)}_{1} \left(e_{i,j}e_{1,1} - e_{1,1} e_{i,j}\right) -  \sum^n_{j = 2}\zeta^{(1,j)}_{2} e_{1,1}^3 e_{j,1}- \sum^n_{j = 2} \zeta^{(j)}_{3}  e_{1,1}^3 e_{1,j}
+  \sum^n_{i=2} \zeta^{(i,1)}_{4} e_{i,1} e_{1,1}^2-  \sum^n_{j=2} \zeta^{(1,j)}_{4} e_{1,1}^2 e_{1,j} + \sum^n_{i =2} \zeta^{(i,1)}_{5} e_{1,i} e_{1,1}^2 - \sum^n_{j =2} \zeta^{(1,j)}_{5} e_{1,1}^2 e_{j,1}.
\end{dmath*}
Using \eqref{l1}, \eqref{l2} of Proposition \ref{sf} and \eqref{g1}, \eqref{g2} of Proposition \ref{ss} implies
\begin{dmath*}
0 =   \sum^n_{j = 2} \zeta^{(1,j)}_{1} (e_{1,j} e_{1,1}-e_{1,1}e_{1,j})+\sum_{i=2}^n  \zeta^{(i,1)}_{1}(e_{i,1}e_{1,1} - e_{1,1} e_{i,1})+ \sum^n_{j = 2}\zeta^{(1,j)}_{2} e_{1,1} e_{j,1} - \sum^n_{j = 2} \zeta^{(j)}_{3} e_{1,1} e_{1,j}
+  \sum^n_{i=2} \zeta^{(i,1)}_{4} ( e_{1,1}^2 e_{i,1} + e_{i,1})- \sum^n_{j=2}\zeta^{(1,j)}_{4} e_{1,1}^2 e_{1,j} +\sum^n_{i=2} \zeta^{(i,1)}_{5}  (e_{1,1}^2 e_{1,i} - e_{1,i}) - \sum^n_{j=2} \zeta^{(1,j)}_{5} e_{1,1}^2 e_{j,1}.
\end{dmath*}
Comparing the coefficients on both sides, we get
\begin{align*} \zeta^{(1,j)}_{1}= \zeta^{(i,1)}_{1}= \zeta^{(1,j)}_{2} = \zeta^{(j)}_{3}= \zeta^{(i,1)}_{4}=  \zeta^{(1,i)}_{4}=\zeta_{5}^{(i,1)} = \zeta_{5}^{(1,j)} = 0,\end{align*}
for all  $i, j\in \Omega \setminus \{1\}$. Rewriting $x$ with these values for the coefficients,
we obtain
\begin{dmath*}
x  =  \zeta^{(1,1)}_{1} e_{1,1}+ \sum^n_{i, j = 2} \zeta^{(i,j)}_{1} e_{i,j} + \sum^n_{i,j = 2}\zeta^{(i,j)}_{2}e_{1,i} e_{1,1} e_{j,1} +  \zeta^{(1)}_{3}e_{1,1}^3
 + \zeta^{(1,1)}_{4}e_{1,1}^2 +  \sum^n_{i,j = 2} \zeta^{(i,j)}_{4}e_{i,1} e_{1,j} + \sum^n_{ i,j = 2}\zeta^{(i,j)}_{5} e_{1,i} e_{j,1} + \zeta e_{1,1}^4.
\end{dmath*}

Choose $ q\neq 1$ and observe that $e_{1, 1} e_{q,q} = 0 = e_{q,q} e_{1,1}$ by \eqref{l1} of Proposition \ref{sf}. Hence,
\begin{dmath*}
0 = x\, e_{q,q} - e_{q,q}\, x
=  \sum^n_{i, j = 2} \zeta^{(i,j)}_{1} \left(e_{i,j}e_{q,q} - e_{q,q} e_{i,j}\right) + \sum^n_{i,j = 2}\zeta^{(i,j)}_{2}\left(e_{1,i} e_{1,1} e_{j,1} e_{q,q} - e_{q,q} e_{1,i} e_{1,1} e_{j,1}\right)
 +  \sum^n_{i,j = 2} \zeta^{(i,j)}_{4}\left(e_{i,1} e_{1,j}e_{q,q} - e_{q,q}e_{i,1} e_{1,j} \right)+ \sum^n_{ i,j = 2}\zeta^{(i,j)}_{5} \left(e_{1,i} e_{j,1}e_{q,q} - e_{q,q}e_{1,i} e_{j,1}\right).
\end{dmath*}
Using Proposition \ref{sf}, \eqref{ss1} of Proposition \ref{47} and \eqref{g1} of Proposition \ref{ss} implies
\begin{dmath*}
0  =  \sum^n_{\tiny \begin{array}{c} i = 2 \\ i\neq q \end{array}} \zeta^{(i,q)}_{1}\left(e_{i,1}e_{1,q} - e_{1,q} e_{i,1}\right)+  \sum^n_{\tiny \begin{array}{c} j =2 \\ j\neq q \end{array}} \zeta^{(q,j)}_{1} \left(e_{1,j}e_{q,1} - e_{q,1} e_{1,j}\right)
- \!\!\! \sum^n_{\tiny \begin{array}{c}  i = 2\\ i\neq q\end{array}}\!\!\! \zeta^{(i,q)}_{2}   e_{1,i} e_{q,1} + \!\!\! \sum^n_{\tiny \begin{array}{c}  j = 2\\ j\neq q\end{array}}\!\!\! \zeta^{(q,j)}_{2}   e_{1,q} e_{j,1}
 + \!\!\! \sum^n_{\tiny \begin{array}{c}  i = 2\\ i\neq q\end{array}} \!\!\! \zeta^{(i,q)}_{4} \left(e_{1,q}e_{1,1} e_{i,1}+ e_{i,q}\right)-\!\!\! \sum^n_{\tiny \begin{array}{c}  j = 2\\ j\neq q\end{array}} \!\!\! \zeta^{(q,j)}_{4}\left( e_{1,j}e_{1,1} e_{q,1} +e_{q,j} \right)
+\!\!\! \sum^n_{\tiny \begin{array}{c}  i = 2\\ i\neq q\end{array}} \zeta^{(i,q)}_{5}  e_{1,i} e_{1,1} e_{q,1} - \!\!\! \sum^n_{\tiny \begin{array}{c}  j = 2\\ j\neq q\end{array}}\!\!\!  \zeta_5^{(q,j)} e_{1,q} e_{1,1}e_{j,1}.
\end{dmath*}
Comparing the coefficients on both sides gives
\begin{align*}
\zeta^{(i,q)}_{1} =  \zeta^{(q,j)}_{1} = \zeta^{(i,q)}_{2}=\zeta^{(q,j)}_{2} = \zeta^{(i,q)}_{4}=\zeta^{(q,j)}_{4}=\zeta^{(i,q)}_{5}=\zeta_5^{(q,j)} = 0,
\end{align*}
for all  $i,j, q\in \Omega\setminus\{1\}$ and $i\neq q\neq j$. Rewriting $x$ with these values for the coefficients,
we get
\begin{dmath*}
x  = \sum^n_{i = 1} \left(\zeta^{(i,i)}_{1} e_{i,i} + \zeta^{(i,i)}_{4}e_{i,1} e_{1,i} \right)+ \sum^n_{i= 2}\left(\zeta^{(i,i)}_{2}e_{1,i} e_{1,1} e_{i,1} + \zeta^{(i,i)}_{5} e_{1,i} e_{i,1}\right) +  \zeta^{(1)}_{3}e_{1,1}^3
  + \zeta e_{1,1}^4.
\end{dmath*}

We next choose $q,s\in \Omega\setminus\{1\}$ and $q\neq s$ and observe that $e_{1,1} e_{q,s} = 0 = e_{q,s} e_{1,1}$ by \eqref{l1} of Proposition \ref{sf}. Hence,
\begin{dmath*}
0  = x e_{qs}  - e_{q,s} x
 = \sum^n_{i = 1} \left(\zeta^{(i,i)}_{1} \left(e_{i,i}e_{q,s}- e_{q,s} e_{i,i}\right) +   \zeta^{(i,i)}_{4}\left(e_{i,1} e_{1,i}e_{q,s} - e_{q,s} e_{i,1} e_{1,i}\right) \right)
 + \sum^n_{i= 2}\left(\zeta^{(i,i)}_{2}\left(e_{1,i} e_{1,1} e_{i,1}e_{q,s} - e_{q,s}e_{1,i} e_{1,1} e_{i,1}\right)  + \zeta^{(i,i)}_{5} (e_{1,i} e_{i,1}e_{q,s} - e_{q,s} e_{1,i} e_{i,1})\right).
\end{dmath*}
Using Proposition \ref{sf}, \eqref{ss1} of Proposition \ref{47} and \eqref{g1} of Proposition \ref{ss} gives
\begin{dmath*}
0 =\left( \zeta^{(q,q)}_{1} - \zeta^{(s,s)}_{1}\right) e_{q,1} e_{1,s} +\left( \zeta^{(s,s)}_{1} - \zeta^{(q,q)}_{1}\right)e_{1,s} e_{q,1}+  \left(\zeta^{(q,q)}_{4}  -  \zeta^{(s,s)}_{4} \right) \left( e_{1,s} e_{1,1} e_{q,1} +e_{q,s}\right)
 -\left( \zeta^{(s,s)}_{2}  - \zeta^{(q,q)}_{2}\right)e_{1,s} e_{q,1} +\left( \zeta^{(s,s)}_{5} - \zeta^{(q,q)}_{5}\right) e_{1,s} e_{1,1} e_{q,1}.
\end{dmath*}
Comparing the coefficients on both sides gives
\begin{align*}
&\zeta^{(q,q)}_{1}= \zeta^{(s,s)}_{1},\quad \zeta^{(q,q)}_{4} =  \zeta^{(s,s)}_{4},\quad
  \zeta^{(s,s)}_{2} =  \zeta^{(q,q)}_{2},
\quad \zeta^{(s,s)}_{5} =  \zeta^{(q,q)}_{5},
\end{align*}
for all $ q, s \in \Omega\setminus\{1\}$ and $q \neq s$. Hence the values of $\zeta^{(q,q)}_{1}, \zeta^{(q,q)}_{4},  \zeta^{(q,q)}_{2}$ and $\zeta^{(q,q)}_{5} $ $( \text{for all}\, q\in \Omega\setminus\{1\})$ do not depend on the value of $q$. We remove the exponents of  $\zeta^{(q,q)}_{1}, \zeta^{(q,q)}_{4},  \zeta^{(q,q)}_{2}$ and $\zeta^{(q,q)}_{5} $ and rewrite $x$ and obtain
\begin{dmath*}
x = \zeta_1^{(1,1)}e_{1,1} +  \zeta_{1}\sum^n_{i = 2} e_{i,i} + \zeta^{(1,1)}_{4} e_{1,1}^2 + \sum^n_{i= 2}\left(\zeta_{4}\,e_{i,1} e_{1,i}+ \zeta_{2}\,e_{1,i} e_{1,1} e_{i,1} + \zeta_{5}\, e_{1,i} e_{i,1}\right) +  \zeta^{(1)}_{3}e_{1,1}^3
  + \zeta e_{1,1}^4.
\end{dmath*}

We observe that $x$ is invariant under the action of the anti-automorphism $\eta$. Therefore, if $x$ commutes with $e_{1,q}$ (resp. $e_{q,1}$)($q\ne 1$) then $x$ commutes with $e_{q,1}$(resp. $e_{1,q}$). 
Choose $q \neq \Omega\setminus\{1\}$ and note that $e_{q,1} e_{i,1} = 0 =e_{i,1} e_{q,1} $ ($i \neq 1$) by \eqref{l1} of proposition \ref{sf}. Hence,
\begin{dmath*}
0 = xe_{q,1} - e_{q,1} x
= \zeta_1^{(1,1)}(e_{1,1}e_{q,1}- e_{q,1} e_{1,1})+ \zeta_{1}\sum^n_{i = 2}  \left(e_{i,i}e_{q,1} - e_{q,1}e_{i,i}\right) + \zeta^{(1,1)}_{4}\left( e_{1,1}^2e_{q,1} - e_{q,1} e_{1,1}^2\right) + \zeta_{4}\sum^n_{i = 2} e_{i,1} e_{1,i}e_{q,1}- \zeta_{2}\sum^n_{i= 2} e_{q,1}e_{1,i} e_{1,1} e_{i,1}
 - \zeta_{5} \sum^n_{i= 2} e_{q,1} e_{1,i} e_{i,1}+  \zeta^{(1)}_{3}\left(e_{1,1}^3 e_{q,1} - e_{q,1} e_{1,1}^3 \right)  + \zeta \left(e_{1,1}^4 e_{q,1} - e_{q,1} e_{1,1}^4\right).
\end{dmath*}
Using \eqref{l1}-\eqref{l22} of Proposition \ref{sf}, \eqref{De2} of Propositions \ref{466}, \eqref{De6} of Proposition \ref{last} and Proposition \ref{ss} gives
\begin{align*}
& 0 = \zeta_1^{(1,1)}(e_{1,1}e_{q,1}- e_{q,1} e_{1,1})+ \zeta_{1}\sum^n_{i = 2}  \left(e_{q,1}e_{1,1} - e_{1,1}e_{q,1}\right) - \zeta^{(1,1)}_{4}e_{q,1} \\& \quad +\zeta_{4} \left(n e_{1,1}^2 e_{q,1}+e_{q,1}\right)+ \zeta_{2}\, e_{1,1} e_{q,1}
-\zeta_{5} \left( n e_{1,1}^2 e_{q,1} +(n-1) e_{q,1} \right)\\&\quad    +  \zeta^{(1)}_{3}(- e_{1,1} e_{q,1} - e_{q,1} e_{1,1})- \zeta ( 2 e_{1,1}^2 e_{q,1} + e_{q,1}).
\end{align*}
Combining the coefficients gives
\begin{dmath*}
0 =\left (\zeta^{(1,1)}_1 -  \zeta_{1} +\zeta_{2} -  \zeta^{(1)}_{3}\right) e_{1,1}e_{q,1}
+\left(-\zeta_1^{(1,1)}+ \zeta_{1}- \zeta^{(1)}_{3} \right)e_{q,1} e_{1,1}
+ \left(-\zeta^{(1,1)}_{4}+ \zeta_{4} - (n-1)\zeta_{5} -\zeta \right)e_{q,1}
+\left( n\zeta_{4}- n \zeta_{5}-2 \zeta \right) e_{1,1}^2 e_{q,1}.
\end{dmath*}
Comparing the coefficients on both sides gives
\begin{align*}
&\zeta_1^{(1,1)}-  \zeta_{1} + \zeta_{2} -  \zeta^{(1)}_{3} = 0,\qquad
-\zeta_1^{(1,1)}+\zeta_{1}- \zeta^{(1)}_{3} = 0,
\\
& -\zeta^{(1,1)}_{4}+ \zeta_{4} - (n-1)\zeta_{5} -\zeta = 0,\qquad
 n\zeta_{4}- n \zeta_{5}-2 \zeta = 0.
\end{align*}
These equations can be reduced to the system
\begin{align*}
&\zeta_1^{(1,1)}-  \zeta_{1} + \zeta^{(1)}_{3} = 0, \quad
 \zeta_{2} - 2 \zeta^{(1)}_{3}= 0,
\\
&\zeta_4^{(1,1)}+(n-2) \zeta_{4} - \left(\frac{n-2}{n}\right)\zeta = 0,\quad
 \zeta_{5} - \zeta_{4}+ \frac{2}{n} \zeta = 0.
\end{align*}
This is a linear system of four equations in eight variables. Hence, there are four free variables.
Setting, \begin{dmath*}(\zeta, \zeta_{4}, \zeta_{2} , \zeta_{1} )= (1,0,0,0),\,\, (0,1,0,0),\,\, (0,0,1,0), \,\, (0,0,0,1),\end{dmath*}
in the last system gives
\begin{dmath*}{\left( \zeta^{(1,1)}_{1}, \zeta^{(1)}_{3}, \zeta^{(1,1)}_{4},\zeta_{5} \right)  =  \left(0, 0, \tfrac{n-2}{n}, \tfrac{-2}{n}\right),\,\, \left(0, 0, 2-n,1 \right), \,\,\left( -\tfrac{1}{2}, \tfrac{1}{2}, 0, 0 \right),\,\, \left( 1, 0, 0, 0 \right)},\end{dmath*}
respectively. Using these solutions in $x$ gives $z_1$, $z_2$, $z_3$, $z_4$ respectively.
\end{proof}


\section{Explicit decomposition of the universal enveloping algebra}

\begin{theorem}
The universal enveloping algebra  $\mathfrak{A}$ of the anti-Jordan triple system $\mathfrak{J}$ can be decomposed as follows:
\[ \mathfrak{A} = F \oplus M_{n,n}(F) \oplus M_{n,n}(F) \oplus M_{n,n}(F) \oplus M_{n,n}(F),\]
where $M_{n,n}$ is the ordinary associative algebra of all $n \times n$ matrices.
\end{theorem}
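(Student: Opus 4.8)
The plan is to build an explicit algebra map $\Phi\colon\mathfrak{A}\to F\oplus M_{n,n}(F)^{\oplus 4}$ out of the five irreducible representations already in hand, prove it is surjective, and then invoke the dimension count $\dim_F\mathfrak{A}=4n^2+1$ of Theorem~\ref{ba} to upgrade surjectivity to an isomorphism.

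First I would assemble the five modules. Together with the counit $\rho_0\colon\mathfrak{A}\to F$, $e_{i,j}\mapsto 0$ (the extension of the trivial one-dimensional representation of $\mathfrak{J}$), take the four maps $\rho_1,\dots,\rho_4\colon\mathfrak{J}\to(\mathrm{End}\,F^n)_{-}$ constructed in the proof of Lemma~\ref{Gh} and extend each, by the universal property, to an algebra homomorphism $\rho_k\colon\mathfrak{A}\to M_{n,n}(F)$. The two facts I would record are: (a) for $1\le k\le 4$ the set $\{\rho_k(E_{i,j})\}_{i,j\in\Omega}$ is, up to nonzero scalars, the full set of matrix units, hence spans $M_{n,n}(F)$; so $\rho_k(\mathfrak{A})=M_{n,n}(F)$ and $F^n$ is an irreducible $\mathfrak{A}$-module under each $\rho_k$, while $F$ is (trivially) an irreducible $\mathfrak{A}$-module under $\rho_0$; (b) the modules $\rho_0,\dots,\rho_4$ are pairwise inequivalent: $\rho_0$ by dimension (here $n\ge 2$), and $\rho_1,\dots,\rho_4$ by the trace computation carried out in the proof of Lemma~\ref{Gh}.

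Next I would set $\Phi=\rho_0\oplus\rho_1\oplus\rho_2\oplus\rho_3\oplus\rho_4$. Since $F$ is algebraically closed and each simple module here has endomorphism ring $F$, and since the five are pairwise non-isomorphic, the Jacobson density theorem shows that $\Phi\colon\mathfrak{A}\to F\oplus M_{n,n}(F)^{\oplus 4}$ is surjective. (Equivalently, $\ker\Phi=\bigcap_k\ker\rho_k$ contains $\mathrm{rad}\,\mathfrak{A}$, and the dimension comparison below forces both to vanish.) By Theorem~\ref{ba} we have $\dim_F\mathfrak{A}=4n^2+1=\dim_F\bigl(F\oplus M_{n,n}(F)^{\oplus 4}\bigr)$, so the surjection $\Phi$ between vector spaces of equal finite dimension is bijective; hence $\Phi$ is an algebra isomorphism, i.e.\ $\mathfrak{A}=F\oplus M_{n,n}(F)\oplus M_{n,n}(F)\oplus M_{n,n}(F)\oplus M_{n,n}(F)$. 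In particular $\mathrm{rad}\,\mathfrak{A}=0$ and $\mathfrak{A}$ is semisimple, and the decomposition is its Wedderburn decomposition.

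The only point that needs care is the surjectivity of $\Phi$, which reduces entirely to the irreducibility and pairwise inequivalence of the $\rho_k$; irreducibility is immediate from the image being a full matrix algebra, and inequivalence was already established by traces in Lemma~\ref{Gh}, so the proof requires no new computation — it is just the combination of Lemma~\ref{Gh} with the basis count of Theorem~\ref{ba}.
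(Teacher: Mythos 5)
Your argument is correct, but it takes a genuinely different route from the paper. The paper proves the decomposition \emph{internally}: it builds two families of $n\times n$ matrix units $B^{(k)}_{i,j}$ and $D^{(k)}_{i,j}$ ($k\in\{0,1\}$) and a $1\times 1$ unit $A_{1,1}$ as explicit polynomials in the basis monomials of $\mathfrak{A}$, verifies the full multiplication table (orthogonality of the five blocks included) by long computations with the structure constants of Section 4, and then checks that $1$ and every $e_{i,j}$ lie in the span of these units, so that $\mathfrak{A}$ equals the direct sum of the five matrix subalgebras. You instead work \emph{externally}: you take the trivial representation together with the four degree-$n$ representations $\rho_1,\dots,\rho_4$ of Lemma \ref{Gh}, extend them to $\mathfrak{A}$ by universality, observe that each $\rho_k$ ($1\le k\le 4$) is surjective onto $M_{n,n}(F)$ (its image contains all matrix units up to nonzero scalars) and that the five modules are simple and pairwise inequivalent (by dimension for $\rho_0$, by the trace argument of Lemma \ref{Gh} for the rest, and inequivalence over $\mathfrak{J}$ implies inequivalence over $\mathfrak{A}$ via $i\colon\mathfrak{J}\to\mathfrak{A}$), so that the density theorem makes $\Phi=\bigoplus_k\rho_k$ surjective onto $F\oplus M_{n,n}(F)^{\oplus 4}$; the count $\dim\mathfrak{A}=4n^2+1$ from Theorem \ref{ba} then forces $\Phi$ to be an isomorphism. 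There is no circularity, since both Lemma \ref{Gh} and Theorem \ref{ba} are established before this point. What your approach buys is brevity and the avoidance of all the matrix-unit verifications; it also immediately classifies the irreducible representations (they are exactly $\rho_0,\dots,\rho_4$), recovering the content of the Remark following the theorem. What the paper's approach buys is the explicit system of orthogonal idempotents and matrix units inside $\mathfrak{A}$ and the explicit inversion formulas \eqref{re} expressing $1$ and the $e_{i,j}$ in terms of them, which your argument does not produce.
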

\begin{proof}
We define the first two sets of $n \times n$ matrix units. For all $k \in \{0,1\}$ and $i,j= 2,\dots,n$, we set
\begin{align*}
&B^{(k)}_{1,1} = \tfrac{1}{4} \left(e_{1,1}^4 + e_{1,1}^2 +(-1)^k\left( e_{1,1}^3 + e_{1,1}\right)\right),\\&
 B^{(k)}_{1,i} = e_{1,1} e_{1,i} + (-1)^k e_{1,1}^2 e_{1,i},
\\
& B^{(k)}_{i,1} = \tfrac{1}{4} \left( e_{i,1} e_{1,1}+(-1)^k(e_{1,1}^2 e_{i,1}+ e_{i,1}) \right),
\\
& B^{(k)}_{i,i} = \tfrac{1}{2} \left(\tfrac{1}{2}\left( e_{1,1}^4 -e_{1,1}^2\right)+ e_{i,1} e_{1,i} +(-1)^k \left(e_{1,i} e_{1,1} e_{i,1} +   e_{i,i}\right) \right),
\\
&B^{(k)}_{i,j} = \tfrac{1}{2} \left( e_{i,1} e_{1,j} + (-1)^k (e_{1,j} e_{1,1} e_{i,1}+ e_{i,j})\right);\quad i \neq j.
\end{align*}
We wish to show that for each $k \in \{0,1\}$, the elements $B^{(k)}_{i,j}; i, j = 1,\dots,n $, satisfy the multiplication table for matrix units and the product of any $B^{(k)}_{i,j}$
by any $B^{(s)}_{t,\ell}$ is $0$ for $k\neq s$. We note first that if $i,j\neq 1$ then  $B^{(k)}_{1,i} B^{(s)}_{1,j} = 0$, since $ e_{1,1} e_{1,i} e_{1,1} = 0$ by \eqref{l22} of Proposition \ref{sf}.
Let $i\neq 1$. Then
\begin{dmath*}
B^{(k)}_{1,1} B^{(s)}_{1,i}
 = \tfrac{1}{4}\left[ (e_{1,1}^4 + e_{1,1}^2) e_{1,1} e_{1,i}+ (-1)^s (e_{1,1}^4 + e_{1,1}^2)e_{1,1}^2e_{1,i}+(-1)^k\left(e_{1,1}^3 + e_{1,1}\right)e_{1,1}e_{1,i} +(-1)^{k+s}\left(e_{1,1}^3 + e_{1,1}\right)e_{1,1}^2e_{1,i}\right].
\end{dmath*}
Using \eqref{De7}, \eqref{De8}, \eqref{De6} of Proposition \ref{last} and \eqref{De2} of Proposition \ref{466} implies
\begin{dmath*}
 B^{(k)}_{1,1} B^{(s)}_{1,i}  = \tfrac{1}{4}\left[ 2\left( 1+ (-1)^{k+s}\right) e_{1,1} e_{1,i}+ 2\left((-1)^s + (-1)^{k}\right) e_{1,1}^2 e_{1,i} \right]{= \delta_{k,s} B^{(k)}_{1,i}}.
\end{dmath*}
Also,
\begin{dmath*}
 B_{1,1}^{(k)}  B_{i,1}^{(s)}  =
 \tfrac{1}{16} \left[ \left(e_{1,1}^4 + e_{1,1}^2\right) e_{i,1} e_{1,1} + (-1)^s \left( e_{1,1}^4 + e_{1,1}^2 \right) \left( e_{1,1}^2 e_{i,1}+ e_{i,1}\right) +(-1)^k \left(e_{1,1}^3 + e_{1,1}\right) e_{i,1} e_{1,1} + (-1)^{k+s}\left(e_{1,1}^3 + e_{1,1}\right) \left( e_{1,1}^2 e_{i,1} + e_{i,1}\right) \right].
\end{dmath*}
 Using \eqref{De6}, \eqref{k10}, \eqref{De9} of Proposition \ref{last} and \eqref{De2} of Proposition \ref{466} and observing that $e_{1,1} e_{i,1} e_{1,1} =0$ by \eqref{l2} of Proposition \ref{sf} imply
\begin{dmath*}
B_{1,1}^{(k)}  B_{i,1}^{(s)}  = \tfrac{1}{16} \left[ (-1)^s\left( e_{1,1}^2 e_{i,1} - e_{1,1}^2 e_{i,1} - e_{1,1}^2 e_{i,1}+ e_{1,1}^2 e_{i,1} \right)  +(-1)^{k+s}\left( e_{1,1}e_{i,1}-e_{1,1} e_{i,1} - e_{1,1} e_{i,1}+ e_{1,1} e_{i,1}\right)\right]
{= 0}.
\end{dmath*}
Next let $i,j \neq 1$. Then
\begin{dmath*}
B^{(k)}_{1,i} B^{(s)}_{j,1}  =
 \tfrac{1}{4} \left[ e_{1,1} e_{1,i} e_{j,1} e_{1,1} + (-1)^s e_{1,1} e_{1,i} \left(e_{1,1}^2 e_{j,1} + e_{j,1}\right)+(-1)^k e_{1,1}^2 e_{1,i} e_{j,1} e_{1,1} + (-1)^{k+s} e_{1,1}^2e_{1,i}\left(e_{1,1}^2 e_{j,1} + e_{j,1}\right) \right].
\end{dmath*}
Using \eqref{g4} of Proposition \ref{st}, \eqref{l3} of Proposition \ref{sf}, \eqref{De2} and \eqref{h2} of Proposition \ref{466} gives
\begin{dmath*}
B^{(k)}_{1,i} B^{(s)}_{j,1}  = \tfrac{1}{4} \delta_{j,i}\left[ \tfrac{1}{2} \left( e_{1,1}^4 + e_{1,1}^2 \right)+ \tfrac{1}{2}(-1)^s (e_{1,1}^3 + e_{1,1})+\tfrac{1}{2}(-1)^k \left(e_{1,1} + e_{1,1}^3\right) + \tfrac{1}{2}(-1)^{k+s} \left(e_{1,1}^4 + e_{1,1}^2\right)\right]
 {=\delta_{j,i} \delta_{k,s} B^{(k)}_{1,1}}.
\end{dmath*}
Also,
\begin{dmath*}
B^{(k)}_{i,1} B^{(s)}_{1,j} = \tfrac{1}{4}\left[ e_{i,1} e_{1,1}^2 e_{1,j} + (-1)^s e_{i,1} e_{1,1}^3 e_{1,j} + (-1)^k\left( e_{1,1}^2 e_{i,1} e_{1,1} e_{1,j} + e_{i,1}e_{1,1} e_{1,j}\right) +(-1)^{k+s}\left( e_{1,1}^2 e_{i,1} e_{1,1}^2 e_{1,j} + e_{i,1} e_{1,1}^2 e_{1,j}\right)\right]
= \tfrac{1}{4}\left[\left(1+(-1)^{k+s}\right) e_{i,1} e_{1,1}^2 e_{1,j} + \left((-1)^s + (-1)^k \right) e_{i,1}e_{1,1} e_{1,j}\right],
\end{dmath*}
since  $e_{1,1}^3 e_{1,j} = e_{1,1} e_{1,j}$ and $e_{1,1} e_{i,1} e_{1,1} = 0$. Using \eqref{g4} of Proposition \ref{st} and \eqref{l2} of proposition \ref{sf} implies
\begin{dmath*}
B^{(k)}_{i,1} B^{(s)}_{1,j} = \tfrac{1}{4}\left[ (1+(-1)^{k+s})\left(\delta_{j,i} \tfrac{1}{2} (e_{1,1}^4 - e_{1,1}^2) + e_{i,1} e_{1,j}\right)+((-1)^s +(-1)^k) \left( e_{1,j} e_{1,1} e_{i,1} + e_{i,j}\right) \right]{= \delta_{k,s} B^{(k)}_{i,j}}.
\end{dmath*}
We have shown that
\begin{align*}
&    B^{(k)}_{1,i} B^{(s)}_{1,j} = 0, \quad B^{(k)}_{1,1} B^{(s)}_{1,i}= \delta_{k,s} B^{(k)}_{1,i},\quad B_{1,1} ^{(k)}B^{(s)}_{i,1}=0,\\&
B^{(k)}_{1,i}B^{(s)}_{j,1}=\delta_{k,s} \delta_{j,i} B_{1,1}^{(k)}, \quad
 B^{(k)}_{i,1}B^{(s)}_{1,j}= \delta_{k,s} B^{(k)}_{i,j},
\end{align*}
for all $i,j\neq 1$. By applying the anti-automorphism $\eta$ to both sides of the first three products and observing that $ {B^{(k)}_{1,i}} =  4 \eta({B^{(k)}_{i,1}}) $, we obtain
\begin{align*}
& B^{(k)}_{j,1} B^{(s)}_{i,1}  = 0, \quad  B^{(s)}_{i,1} B^{(k)}_{1,1} = \delta_{k,s} B^{(k)}_{i,1}, \quad B^{(s)}_{1,i} B_{1,1} ^{(k)}=0.
\end{align*}
Now we use the above products to get all the others. For $k,s \in \{0,1\}$ and $i\neq 1$,
we have $ B^{(k)}_{1,i}B^{(k)}_{i,1}= B^{(k)}_{1,1} $, hence $ B_{1,1}^{(s)} B^{(k)}_{1,i} B^{(k)}_{i,1} = B^{(s)}_{1,1}B^{(k)}_{1,1}$.
Thus $B^{(s)}_{1,1} B^{(k)}_{1,1}= \delta_{k,s} B^{(k)}_{1,1}$.
We now have  $B^{(k)}_{i,q} =  B^{(k)}_{i,1} B^{(k)}_{1,q}  $ (for all $i,q$). Hence, $B^{(k)}_{i,q}B^{(s)}_{\ell,t} =  B^{(k)}_{i,1} B^{(k)}_{1,q}B^{(s)}_{\ell,1} B^{(s)}_{1,t} = \delta_{k,s} \delta_{q,\ell} B^{(k)}_{i,1} B^{(k)}_{1,1} B^{(k)}_{1,t}= \delta_{k,s} \delta_{q,\ell} B^{(k)}_{i,1} B^{(k)}_{1,t}=\delta_{k,s} \delta_{q,\ell} B^{(k)}_{i,t}$ (for all $i, q, \ell, t$).
Summarizing
\begin{equation}\label{BBM} B^{(s)}_{i,j} B^{(s)}_{t,\ell} = \delta_{j,t} B^{(s)}_{i,\ell},\quad   B^{(s)}_{i,j} B^{(k)}_{t,\ell} = 0,  \end{equation}
for all  $s,k \in \{0,1\}, s\neq k$ and $i,j,t,\ell = 1,\dots n$.

We define next the two other sets of $n \times n$ matrix units.
For $k\in \{0,1\}$ and $i,j =2,\dots,n$, we set
\begin{align*}
&D^{(k)}_{1,1} = \tfrac{1}{4} \left(e_{1,1}^4 - e_{1,1}^2 +(-1)^k \mathrm{I} \left(e_{1,1} - e_{1,1}^3\right)\right),
\\
& D^{(k)}_{1,i} = -\tfrac{1}{2} \left( e_{1,1} e_{i,1} +(-1)^k \mathrm{I}\, e_{1,1}^2 e_{i,1}\right),
\\
& D^{(k)}_{i,1} = -\tfrac{1}{2}  \left( e_{1,i} e_{1,1}+(-1)^k \mathrm{I} \left( e_{1,1}^2 e_{1,i}-e_{1,i}\right) \right),
\\
& D^{(k)}_{i,i} = \tfrac{1}{2} \left( \tfrac{1}{2} \left(e_{1,1}^4 + e_{1,1}^2\right) -  e_{1,i} e_{i,1}-(-1)^k \mathrm{I}\,  e_{1,i} e_{1,1} e_{i,1} \right),
\\
& D^{(k)}_{i,j} = - \tfrac{1}{2} \left( e_{1,i} e_{j,1} + (-1)^k \mathrm{I}\, e_{1,i} e_{1,1} e_{j,1} \right);\quad i\neq j,
\end{align*}
where  $\mathrm{I}=\sqrt{-1}$. We wish to show that for each $k \in \{0,1\}$, the elements $D^{(k)}_{i,j}; i, j = 1,\dots,n $, satisfy the multiplication table for matrix units and the product of any $D^{(k)}_{i,j}$
by any $D^{(s)}_{t,\ell}$ is $0$ for $k\neq s$. We note first that if $i,j \neq 1$ then $D^{(k)}_{1,i}D^{(s)}_{1,j} = 0$, since $e_{1,1} e_{i,1} e_{1,1} = 0$. Let $i\neq 1$. Then
\begin{dmath*}
D^{(k)}_{1,1} D^{(s)}_{1,i}= - \tfrac{1}{8}\left[(e_{1,1}^4 - e_{1,1}^2) e_{1,1} e_{i,1}+ (-1)^s \mathrm{I} \left(e_{1,1}^4 - e_{1,1}^2\right)e_{1,1}^2e_{i,1}
 +(-1)^{k} \mathrm{I}\left(e_{1, 1} - e_{1,1}^3\right)e_{1,1} e_{i,1}   - (-1)^{k+ s} \left(e_{1,1}- e_{1,1}^3\right)e_{1,1}^2 e_{i,1}\right].
\end{dmath*}
Using \eqref{k10}, \eqref{De9}, \eqref{De6} of Proposition \ref{last} and \eqref{De2} of Proposition \ref{466} gives
\begin{dmath*}
D^{(k)}_{1,1} D^{(s)}_{1,i}  =  - \tfrac{1}{8}\left[2e_{1,1} e_{i,1}+ 2 (-1)^s \mathrm{I}\, e_{1,1}^2e_{i,1} + 2(-1)^{k} \mathrm{I}\, e_{1, 1}^2e_{i,1}  + 2(-1)^{k+ s} e_{1,1} e_{i,1}\right]
 = - \tfrac{1}{4}\left[\left(1+ (-1)^{k+ s}\right) e_{1,1} e_{i,1}+ \left((-1)^s+ (-1)^{k}\right) \mathrm{I}\, e_{1,1}^2e_{i,1}  \right]
 {=\delta_{k,s} D^{(k)}_{1,i}}.
\end{dmath*}
Also,
\begin{dmath*}
D^{(k)}_{1,1} D^{(s)}_{i,1}  = -\tfrac{1}{8}\left[ e_{1,1}^4 e_{1,i} e_{1,1} +(-1)^s \mathrm{I}\, e_{1,1}^4\left( e_{1,1}^2 e_{1,i} - e_{1,i}\right)- e_{1,1}^2 e_{1,i} e_{1,1} -(-1)^s\mathrm{I}\, e_{1,1}^2 (e_{1,1}^2 e_{1,i} - e_{1,i}) +(-1)^{k}\mathrm{I} \left(e_{1,1} - e_{1,1}^3\right) e_{1,i} e_{1,1} - (-1)^{k+s}\left(e_{1,1} -e_{1,1}^3\right)\left(e_{1,1}^2 e_{1,i}-e_{1,i}\right)\right].
\end{dmath*}
Using \eqref{De6}, \eqref{De7}, \eqref{De8} of Proposition \ref{last} and  \eqref{De2} of Proposition \ref{466} and observing that $ e_{1,1} e_{1,i} e_{1,1} = 0$ imply
\begin{dmath*}
D^{(k)}_{1,1} D^{(s)}_{i,1} = -\tfrac{1}{8}\left[ (-1)^s \mathrm{I} \left( e_{1,1}^2 e_{1,i} -e_{1,1}^2 e_{1,i}\right) -(-1)^s \mathrm{I} \left(e_{1,1}^2 e_{1,i} - e_{1,1}^2e_{1,i}\right)- (-1)^{k+s}\left(e_{1,1}e_{1,i} -  e_{1,1} e_{1,i} - e_{1,1} e_{1,i} + e_{1,1} e_{1,i}\right)\right]
 {= 0}.
\end{dmath*}
Next let $i,j\neq 1$. Then
\begin{dmath*}
D^{(k)}_{1,i} D^{(s)}_{j,1} = \tfrac{1}{4} \left[ e_{1,1} e_{i,1} e_{1,j} e_{1,1} +(-1)^s \mathrm{I} \left( e_{1,1} e_{i,1}e_{1,1}^2 e_{1,j}- e_{1,1} e_{i,1} e_{1,j}\right) +(-1)^k \mathrm{I}\, e_{1,1}^2 e_{i,1} e_{1,j} e_{1,1}
-(-1)^{k+s} e_{1,1}^2 e_{i,1} \left(e_{1,1}^2 e_{1,j} - e_{1,j}\right)\right].
\end{dmath*}
Using \eqref{k1} of Proposition \ref{LL}, \eqref{l2} of Proposition \ref{sf}, \eqref{ss1} and \eqref{ss2} of Proposition \ref{47} gives
\begin{dmath*}
D^{(k)}_{1,i} D^{(s)}_{j,1} =  \tfrac{1}{4} \left[ \delta_{i,j}\tfrac{1}{2}\left(e_{1,1}^4 - e_{1,1}^2\right) -\delta_{i,j} (-1)^s \mathrm{I}\tfrac{1}{2} \left(e_{1,1}^3 - e_{1,1}\right) +  \delta_{i,j}(-1)^k\tfrac{1}{2} \mathrm{I} (e_{1,1} - e_{1,1}^3)
+  \delta_{i,j} (-1)^{k+s} \tfrac{1}{2}  (e_{1,1}^4 - e_{1,1}^2) \right]
 = \tfrac{1}{4}\delta_{i,j} \left[ \tfrac{1}{2} \left(1+ (-1)^{k+s}\right)\left(e_{1,1}^4 - e_{1,1}^2\right)+  \tfrac{1}{2}\left((-1)^s + (-1)^k\right) \mathrm{I} (e_{1,1} - e_{1,1}^3)
 \right]
= \delta_{s,k} \delta_{i,j} D_{1,1}^{(k)}.
\end{dmath*}
Also,
\begin{dmath*}
 D^{(k)}_{i,1} D^{(s)}_{1,\ell} = \tfrac{1}{4}\left[ e_{1,i} e_{1,1}^2 e_{\ell,1} + (-1)^s \mathrm{I}\, e_{1,i} e_{1,1}^3 e_{\ell,1} + (-1)^{k} \mathrm{I} \left(e_{1,1}^2 e_{1,i}e_{1,1} e_{\ell,1} - e_{1,i}e_{1,1}e_{\ell,1}\right) -(-1)^{k+s}\left(e_{1,1}^2 e_{1,i} e_{1,1}^2 e_{\ell,1} - e_{1,i} e_{1,1}^2 e_{\ell,1}\right)\right].
\end{dmath*}
Using \eqref{g1} of Proposition \ref{ss} and \eqref{k2} of Proposition \ref{LL} implies
\begin{dmath*}
 D^{(k)}_{i,1} D^{(s)}_{1,\ell} = \tfrac{1}{4}\left[(1+(-1)^{k+s}) \left( \delta_{i,\ell}\tfrac{1}{2} (e_{1,1}^4 + e_{1,1}^2) - e_{1,i} e_{\ell,1}\right) -\left((-1)^s+(-1)^{k}
 \right) \mathrm{I}\, e_{1,i} e_{1,1} e_{\ell,1} \right] {= \delta_{k,s} D^{(k)}_{i,\ell}}.
\end{dmath*}
The other products can be obtained by using the argument at the end of the proof of the first two sets of $n \times n$ matrix units. Summarizing
\begin{equation}\label{BBM1} D^{(s)}_{i,j} D^{(s)}_{k,\ell} = \delta_{j,k} D^{(s)}_{i,\ell},\quad   D^{(s)}_{i,j} D^{(t)}_{k,\ell} = 0,\end{equation}
for all $s,t \in \{0,1\}$, $s\neq t$ and  $i,j,k,\ell= 1,\dots,n$.

We wish to prove now that the product of any $D^{(k)}_{i,j}$ by any  $B^{(s)}_{m,n}$ is $0$. Clearly $D_{1,i}^{(k)} B^{(s)}_{\ell,1}=0 $ and $ D^{(k)}_{1,i} B^{(s)}_{1,\ell}= 0$ ($i,\ell\neq 1$), since $e_{i,1} e_{\ell,1} = 0$ and $e_{1,1} e_{i,1} e_{1,1} = 0$. Let $\ell \neq 1$. Then
\begin{dmath*}
D_{1,1}^{(k)} B^{(s)}_{1,\ell} = \tfrac{1}{4}\left[\left(e_{1,1}^4 - e_{1,1}^2\right) e_{1,1}e_{1,\ell}+(-1)^s (e_{1,1}^4 - e_{1,1}^2) e_{1,1}^2 e_{1,\ell} +(-1)^{k}\mathrm{I}\left(e_{1,1}-e_{1,1}^3\right)e_{1,1} e_{1,\ell} +(-1)^{k+s} \mathrm{I} \left(e_{1,1}- e_{1,1}^3 \right)e_{1,1}^2e_{1,\ell} \right]
 =  \tfrac{1}{4}\left[\left( e_{1,1} e_{1,\ell} -e_{1,1}e_{1,\ell} \right)+(-1)^s \left(e_{1,1}^2e_{1,\ell} -e_{1,1}^2e_{1,\ell}\right) +(-1)^{k}\mathrm{I}\left(e_{1,1}^2-e_{1,1}^2\right) e_{1,\ell} +(-1)^{k+s} \mathrm{I} \left(e_{1,1}- e_{1,1}\right)e_{1,\ell} \right] {= 0},
\end{dmath*}
using \eqref{De7}, \eqref{De8}, \eqref{De6} of Proposition \ref{last} and \eqref{De2} of Proposition \ref{466}.
Also,
\begin{dmath*}
D^{(k)}_{1,1} B^{(s)}_{\ell,1} = \tfrac{1}{16}\left[ (e_{1,1}^4- e_{1,1}^2)e_{\ell,1} e_{1,1} + (-1)^s \left(e_{1,1}^4 - e_{1,1}^2\right)\left( e_{1,1}^2 e_{\ell,1} + e_{\ell,1}\right) +(-1)^k \mathrm{I} \left(e_{1,1} - e_{1,1}^3\right)e_{\ell,1} e_{1,1}+(-1)^{k+s}\mathrm{I} \left(e_{1,1} - e_{1,1}^3\right)\left(e_{1,1}^2 e_{\ell,1}+e_{\ell,1}\right)\right]
=\tfrac{1}{16}\left[(-1)^s \left(e_{1,1}^2 e_{\ell,1}- e_{1,1}^2 e_{\ell,1} - \left( -e_{1,1}^2 e_{\ell,1} + e_{1,1}^2 e_{\ell,1}\right) \right) +(-1)^{k+s}\mathrm{I} \left(-e_{1,1} e_{\ell,1}+e_{1,1}e_{\ell,1} -  e_{1,1} e_{\ell,1}+ e_{1,1} e_{\ell,1}\right)\right] {= 0},
\end{dmath*}
using \eqref{De9}, \eqref{De6}, \eqref{k10} of Proposition \ref{last} and  \eqref{De2} of Proposition \ref{466}.
We have shown that
\begin{align}\label{DBB}
D_{1,i}^{(k)} B^{(s)}_{\ell,1}=0 ,\quad  D^{(k)}_{1,i} B^{(s)}_{1,\ell}= 0,\quad D_{1,1}^{(k)} B^{(s)}_{1,\ell}=0,\quad D^{(k)}_{1,1} B^{(s)}_{\ell,1}=0,\end{align}
for all $i,\ell \neq 1$ and $k,s \in \{0,1\}$.  Let $\ell\neq 1$, then  $D^{(k)}_{1,i} B^{(s)}_{1,1} = D^{(k)}_{1,i} B^{(s)}_{1,\ell}B^{(s)}_{\ell,1} = 0$ (for all $i$), using \eqref{BBM} and \eqref{DBB}. Combining this result with the first and the last equations of \eqref{DBB} gives $D^{(k)}_{1,i} B^{(s)}_{j,1} = 0$ (for all $i,j$).  By \eqref{BBM} and \eqref{BBM1},  $D^{(k)}_{i,j} B^{(s)}_{t,\ell} = D^{(k)}_{i,1} D^{(k)}_{1,j} B^{(s)}_{t,1} B^{(s)}_{1,\ell}$ (for all $i,j,t,\ell$). Hence, $D^{(k)}_{i,j} B^{(s)}_{t,\ell} =0$ (for all $i,j,t,\ell$).
By using the anti-automorphism $\eta$, we can show that $ B^{(s)}_{t,\ell} D^{(k)}_{i,j} = 0$ (for all $i,j,t,\ell$). Summarizing
\begin{equation}\label{BBM2} D^{(k)}_{i,j}  B^{(s)}_{t,\ell} = 0 =  B^{(s)}_{t,\ell}D^{(k)}_{i,j} \quad \text{for all} \,\, i,j,t,\ell= 1,\dots,n, \,\, s,k \in \{0,1\}.\end{equation}

Finally, we define the set of $1 \times 1$ matrix unit.
We set,
\begin{align}\label{AAu}
A_{1,1} = \sum^n_{i=2}e_{1,i} e_{i,1} - \sum^n_{i=2} e_{i,1} e_{1,i} - ne_{1,1}^4 +1.
\end{align}
We wish to show that $A_{1,1}^2 = A_{1,1}$ and the products of $A_{1,1}$ by any $B^{(k)}_{i,j}$ and  $D^{(k)}_{i,j}$ are $0$. We observe that
\begin{dmath}\label{kj}
\sum^1_{k=0}\left(B^{(k)}_{1,1}+ D^{(k)}_{1,1} \right)+ \sum^n_{i=2}\sum^1_{k=0}B^{(k)}_{i,i}+ \sum^n_{i=2}\sum^1_{k=0}D^{(k)}_{i,i}
= e^4_{1,1}+ \sum^n_{i=2} \left(\tfrac{1}{2} (e_{1,1}^4 - e_{1,1}^2) + e_{i,1}e_{1,i}\right)+\sum^{n}_{i=2} \left(\tfrac{1}{2}( e_{1,1}^4 + e_{1,1}^2) - e_{1,i} e_{i,1}\right)
 = e^4_{1,1}+ \tfrac{1}{2}(n-1) (e_{1,1}^4 - e_{1,1}^2) + \sum^{n}_{i=2} e_{i,1} e_{1,i}+\tfrac{1}{2} (n-1)(e_{1,1}^4 + e_{1,1}^2) - \sum^n_{i=2}e_{1,i} e_{i,1}
 = ne_{1,1}^4 + \sum^{n}_{i=2} e_{i,1} e_{1,i}- \sum^n_{i=2}e_{1,i} e_{i,1}.
\end{dmath}
Using \eqref{kj} in \eqref{AAu} gives
\begin{equation}\label{Aq} A_{1,1} = 1 - \left(\sum^1_{k=0}\left(B^{(k)}_{1,1}+ D^{(k)}_{1,1} \right)+ \sum^n_{i=2}\sum^1_{k=0}B^{(k)}_{i,i}+ \sum^n_{i=2}\sum^1_{k=0}D^{(k)}_{i,i}\right).\end{equation}
Multiply \eqref{Aq} by  $B^{(k)}_{\ell, m}$ from the right and use the relations of \eqref{BBM}, \eqref{BBM1} and \eqref{BBM2} (of the present proof), we obtain
\[
A_{1,1} B^{(k)}_{\ell, m} = B^{(k)}_{\ell, m}- B^{(k)}_{\ell, m}=0.
\]
Similarly, we can show that $B^{(k)}_{\ell, m}A_{1,1} =0$ and $ A_{1,1} D^{(k)}_{\ell, m}=0= D^{(k)}_{\ell, m}A_{1,1}$.
To show $ A^2_{1,1} = A_{1,1}$, we multiply \eqref{Aq}  by $A_{1,1}$ and use the last discussion.

Now let $\Phi^{(k)}_{n}$ (resp. $\Psi^{(k)}_{n}$ and $\tau_1$) denote the subspace of $\mathfrak{A}$ generated by the $B^{(k)}_{i,j}$ (resp. $D^{(k)}_{i,j}$ and $A_{1,1}$), $k\in \{0,1\}$.
Our discussion shows that $\Phi^{(k)}_{n}$ (resp. $\Psi^{(k)}_{n}$ and $\tau_1$) is a subalgebra of $\mathfrak{A}$ and isomorphic to $M_{n,n}$ (resp. $M_{n,n}$ and $M_{1,1}$),   $\Phi^{(k)}_{n} \Phi^{(s)}_{n} = 0 = \Phi^{(s)}_{n}\Phi^{(k)}_{n} $, $\Psi^{(k)}_{n} \Psi^{(s)}_{n} = 0 = \Psi^{(s)}_{n}\Psi^{(k)}_{n} $ $(k \neq s)$, $\Phi^{(k)}_{n} \Psi^{(s)}_{n} = 0 = \Psi^{(s)}_{n}\Phi^{(k)}_{n} $, $\Phi^{(s)}_{n} \tau_1 = 0 = \tau_1 \Phi^{(s)}_{n}$ and $ \Psi^{(s)}_{n} \tau_1 = 0 = \tau_1 \Psi^{(s)}_{n}$.
By \eqref{Aq} and the definitions of $B^{(k)}_{i,j}$ and $D^{(k)}_{i,j}$, we have
\begin{dmath}\label{re}
 1 = A_{1,1} + \sum^{n}_{i=1}B^{(0)}_{i,i}+\sum^{n}_{i=1}B^{(1)}_{i,i}+ \sum^{n}_{i=1} D^{(0)}_{i,i}+\sum^{n}_{i=1} D^{(1)}_{i,i},
 \\
{e_{1,1} =  B^{(0)}_{1,1}- B^{(1)}_{1,1} - \mathrm{I}\, D^{(0)}_{1,1}+ \mathrm{I}\,  D^{(1)}_{1,1}},
\\
 {e_{i,j} = B^{(0)}_{i,j}- B^{(1)}_{i,j} - \mathrm{I}\, D^{(0)}_{j,i}+ \mathrm{I}\,  D^{(1)}_{j,i};\quad {i,j \neq 1,  i\neq j}},
 \\
  {e_{i,i} = B^{(0)}_{i,i}- B^{(1)}_{i,i} - \mathrm{I}\, D^{(0)}_{i,i}+ \mathrm{I}\,  D^{(1)}_{i,i}; \quad {i\neq 1}},
  \\
 {e_{1,i} =  \tfrac{1}{2} B^{(0)}_{1,i}- \tfrac{1}{2} B^{(1)}_{1,i} - \mathrm{I}\, D^{(0)}_{i,1}+ \mathrm{I}\,  D^{(1)}_{i,1}; \quad i\neq 1},
 \\
{e_{i,1} = 2 B^{(0)}_{i,1}-2 B^{(1)}_{i,1} - \mathrm{I}\, D^{(0)}_{1,i}+ \mathrm{I}\,  D^{(1)}_{1,i}; \quad {i\neq 1}}.
\end{dmath}
Thus all the $1, e_{i,j} \in  \tau_{1} \oplus \Phi^{(0)}_{n} \oplus \Phi^{(1)}_{n}\oplus \Psi^{(0)}_{n}\oplus \Psi^{(1)}_{n} $. Hence $ \mathfrak{A} = \tau_{1} \oplus \Phi^{(0)}_{n} \oplus \Phi^{(1)}_{n}\oplus \Psi^{(0)}_{n}\oplus \Psi^{(1)}_{n}$.
\end{proof}
\begin{remark}
The equations \eqref{re} (of the last proof) describe all inequivalent irreducible representations of the anti-Jordan triple system $\mathfrak{J}$.
\end{remark}
\begin{corollary}
The universal enveloping algebra of the simple anti-Jordan triple system of all $n \times n$ matrices over an algebraically closed field is semisimple.
\end{corollary}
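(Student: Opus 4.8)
The plan is to read off semisimplicity directly from the explicit decomposition established in the preceding theorem. First I would recall that that theorem gives an isomorphism of unital associative algebras
\[
\mathfrak{A} \cong F \oplus M_{n,n}(F) \oplus M_{n,n}(F) \oplus M_{n,n}(F) \oplus M_{n,n}(F),
\]
realised concretely through the subalgebras $\tau_1,\ \Phi^{(0)}_n,\ \Phi^{(1)}_n,\ \Psi^{(0)}_n,\ \Psi^{(1)}_n$ spanned respectively by the matrix-unit families $A_{1,1}$, the $B^{(0)}_{i,j}$, the $B^{(1)}_{i,j}$, the $D^{(0)}_{i,j}$ and the $D^{(1)}_{i,j}$; by construction each of these families satisfies the matrix-unit multiplication table, and the relations \eqref{BBM}, \eqref{BBM1}, \eqref{BBM2} together with the vanishing of $A_{1,1}$ against every $B^{(k)}_{i,j}$ and $D^{(k)}_{i,j}$ show that distinct blocks annihilate each other. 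Thus the decomposition is a decomposition of $\mathfrak{A}$ into a direct sum of two-sided ideals, each of which is isomorphic either to $F$ or to the full matrix algebra $M_{n,n}(F)$.

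Next I would invoke the standard fact that $M_{n,n}(F)$ is a simple algebra for any field $F$, and that the ground field $F$ is simple as a one-dimensional algebra; hence each of the five summands is a simple Artinian ring, so has zero Jacobson radical. Since the Jacobson radical of a finite direct product of rings is the product of the radicals, $J(\mathfrak{A}) = 0$. By Theorem~\ref{ba} the algebra $\mathfrak{A}$ is finite-dimensional over $F$, hence Artinian, and an Artinian ring with trivial Jacobson radical is semisimple. Therefore $\mathfrak{A}$ is semisimple, which is the assertion of the corollary.

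There is no real obstacle here; the only point that must be made carefully is that the decomposition of the previous theorem is an algebra direct sum rather than a mere direct sum of vector spaces, and this is precisely what equations \eqref{BBM}--\eqref{BBM2} and the orthogonality relations for $A_{1,1}$ supply, so nothing further is needed. Equivalently, one could avoid the language of the radical altogether: a finite direct sum of simple two-sided ideals is by definition a semisimple algebra, and the $4n^2+1 = \dim\mathfrak{A}$ count matches the five simple blocks, corresponding to the trivial representation of degree $1$ together with the four pairwise inequivalent irreducible representations $\rho_1,\dots,\rho_4$ of degree $n$ exhibited (and extended to $\mathfrak{A}$) in the proof of Lemma~\ref{Gh}.
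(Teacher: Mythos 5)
Your proposal is correct and follows exactly the route the paper intends: the corollary is stated as an immediate consequence of the preceding decomposition theorem, since a finite direct sum of full matrix algebras over a field is semisimple. The extra detail you supply (that the decomposition is into mutually annihilating two-sided ideals via the relations \eqref{BBM}--\eqref{BBM2} and the orthogonality of $A_{1,1}$, and the Jacobson-radical or direct-sum-of-simples justification) is a sound elaboration of what the paper leaves implicit.
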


The next example shows that the universal enveloping algebra is not necessary to be finite-dimensional
\begin{example}
Consider the  2-dimensional anti-Jordan triple system $S$ with basis $\mathcal{B}= \{ a= e_{1,2}, b= e_{2,1}\}$ of matrix units and triple product given by $\langle a,b,c   \rangle= abc- cba$. It is easy to check that the multiplication table of $S$ is zero.  The universal enveloping algebra is associative algebra with relations: $b^2 a = a b^2$ and $ba^2 = a^2 b$, which is the down-up algebra $A(0,1,0)$ (see \cite{roby}).
\end{example}

To conclude the paper, we formulate the following conjecture.

\begin{conjecture}
If the universal enveloping algebra of a simple finite-dimensional anti-Jordan triple system is finite-dimensional, then it is semisimple.
\end{conjecture}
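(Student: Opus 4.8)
The plan is to prove the implication directly: given a simple finite-dimensional anti-Jordan triple system $\mathfrak{J}$ whose universal associative envelope $\mathfrak{A}$ is finite dimensional, I would show that its Jacobson radical $R = J(\mathfrak{A})$ vanishes. Since $\mathfrak{A}$ is a finite-dimensional associative algebra over an algebraically closed field of characteristic $0$, semisimplicity is equivalent to $R = 0$, and also to complete reducibility of every finite-dimensional $\mathfrak{A}$-module; by the universal property these modules are exactly the finite-dimensional representations of $\mathfrak{J}$. So it suffices to prove that every short exact sequence of finite-dimensional representations of $\mathfrak{J}$ splits.

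First I would exploit simplicity through the canonical map. The composite $\mathfrak{J} \to \mathfrak{A}_{-} \to (\mathfrak{A}/R)_{-}$ is a homomorphism of anti-Jordan triple systems, and its kernel is a triple ideal of $\mathfrak{J}$; by simplicity it is $0$ or $\mathfrak{J}$. The case $\mathfrak{J}$ would force the image of $\mathfrak{J}$ into $R$, hence $\mathfrak{A} = F\cdot 1 \oplus R$ with augmentation ideal $R$ and unique irreducible the trivial one, contradicting the existence of a nontrivial irreducible representation that any nonzero simple system admits (the triple product is nonzero, so the generators are not all nilpotent). Thus the canonical map remains injective modulo $R$, which anchors $\mathfrak{J}$ inside the semisimple quotient.

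The heart of the argument is a Weyl-style complete-reducibility proof. I would construct an invariant symmetric bilinear form on $\mathfrak{J}$ by setting $\beta(a,b) = \mathrm{Tr}(\rho(a)\rho(b))$ for a faithful finite-dimensional representation $\rho$ (the direct sum of the inequivalent irreducibles), and verify, from the representation identity $\rho(\langle abc\rangle) = \rho(a)\rho(b)\rho(c) - \rho(c)\rho(b)\rho(a)$ together with the anti-symmetry $\langle abc\rangle = -\langle cba\rangle$ and the defining Jordan-type identity, the invariance of $\beta$ under the inner maps $L(a,b)c = \langle abc\rangle$. The radical of $\beta$ is then a triple ideal, so it is $0$ by simplicity once $\beta \neq 0$; hence $\beta$ is nondegenerate. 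From dual bases with respect to $\beta$ I would assemble a Casimir-type element of $\mathfrak{A}$, show it is central and acts by a nonzero scalar on each nontrivial irreducible, and use it as in the classical Whitehead-lemma argument to produce an invariant complement to any submodule, thereby splitting extensions between nontrivial irreducibles; the residual extensions involving the trivial representation would be handled separately by a direct cocycle computation. Complete reducibility then gives $R = 0$.

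The main obstacle, and the reason the statement is posed only as a conjecture, is precisely this middle step: because the product is ternary, the correct invariance identity for $\beta$ and the centrality of the Casimir element are considerably more delicate than in the Lie or binary-algebra setting, and one must control the asymmetric middle-slot action $L(b,a)$ appearing in the Jordan identity $\langle ab\langle cde\rangle\rangle = \langle\langle abc\rangle de\rangle + \langle c\langle bad\rangle e\rangle + \langle cd\langle abe\rangle\rangle$. One must also rule out nonsplit self-extensions of the trivial representation, where the Casimir operator is useless. A safer but less uniform fallback is to invoke Bashir's classification of simple finite-dimensional anti-Jordan triple systems, discard the families whose envelopes are infinite dimensional (as in \cite{Elgendy, M}), and verify semisimplicity for each remaining family by the same Gr\"obner-basis and structure-constant analysis carried out here for the $n\times n$ matrices; establishing that this short list is exhaustively semisimple would settle the conjecture.
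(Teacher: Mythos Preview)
The statement you are addressing is labelled a \emph{Conjecture} in the paper and is left open; the paper offers no proof, only the supporting evidence of the single family $M_{n,n}(F)$ treated in the body together with the references to infinite-dimensional envelopes in \cite{Elgendy, M}. So there is no ``paper's proof'' to compare against, and your write-up is (appropriately) a strategy outline rather than a proof.

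As a programme your two-track plan is reasonable, and you are candid about where it breaks. A few remarks on the actual gaps. In the second paragraph, the step ``the triple product is nonzero, so the generators are not all nilpotent'' does not by itself give a nontrivial irreducible $\mathfrak{A}$-module; what you really need is that the canonical map $i\colon \mathfrak{J}\to\mathfrak{A}$ is nonzero, and hence injective by simplicity, which follows once you exhibit \emph{any} associative representation on which some element of $\mathfrak{J}$ acts nontrivially. For the classification families this is easy, but in the abstract setting you should say so explicitly. More seriously, the Casimir/Whitehead step is where the argument is genuinely incomplete: for a ternary product the trace form $\beta(a,b)=\mathrm{Tr}\,\rho(a)\rho(b)$ need not be invariant under the middle-slot operators $L(b,a)$ arising in the five-term identity, and without that invariance neither the ideal property of the radical of $\beta$ nor the centrality of the candidate Casimir element go through. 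You flag this, but note that it is not a technicality: it is exactly the obstruction that prevents a direct transport of Weyl's theorem, and no general fix is known. Finally, your treatment of extensions by the trivial module (``a direct cocycle computation'') is a placeholder; even for Lie algebras this is the step that requires a separate argument (Whitehead's first lemma), and here there is no analogue on record.

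Your fallback via Bashir's classification is the more honest route and mirrors what the paper actually does for one family: run the Gr\"obner-basis computation for each remaining simple type whose envelope is finite dimensional and read off the Wedderburn decomposition. That would settle the conjecture, but it is a case analysis still to be carried out, not a proof.
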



\section*{Acknowledgements}
This work forms part of the author's doctoral thesis. The author would like to thank her supervisor Prof. Murray Bremner for very helpful discussions and comments.
The author was supported by a Teacher Scholar Doctoral Fellowship from the
University of Saskatchewan.


\end{document}